\DeclareMathOperator{\Sym}{Sym} 
\DeclareMathOperator{\rank}{rank}
\DeclareMathOperator{\hei}{Hei}
 \DeclareMathOperator{\G}{\Gamma}
 \DeclareMathOperator{\coker}{coker}
 \DeclareMathOperator{\aut}{Aut}
\theoremstyle{plain}
\newtheorem{Thm}{Theorem}
\newtheorem{lemma}[Thm]{Lemma}
\newtheorem{corollary}[Thm]{Corollary}
\newtheorem{example}[Thm]{Example}
\theoremstyle{definition}
\newtheorem{definition}[Thm]{Definition}
\theoremstyle{remark}
\begin{document}

\title[Rational equivariant $K$-homology of certain $3$-orbifold groups.]
      {Rational equivariant $K$-homology of low dimensional groups}

\author{J.-F.\ Lafont}
\address{Department of Mathematics\\
         Ohio State University\\
         Columbus, OH  43210\\
         USA}
\email[Jean-Fran\c{c}ois\ Lafont]{jlafont@math.ohio-state.edu}
\author{I.\ J.\ Ortiz}
\address{Department of Mathematics\\
         Miami University\\
         Oxford, OH 45056\\
         USA}
\email[Ivonne J.\ Ortiz]{ortizi@muohio.edu}
\author{R. J. S\'anchez-Garc\'ia}
\address{School of Mathematics\\
University of Southampton\\
Southampton SO17 1BJ\\ 
UK}
\email[Rub\'en J. S\'anchez-Garc\'ia]{R.Sanchez-Garcia@soton.ac.uk}

\begin{abstract}
We consider groups $G$ which have a cocompact, $3$-manifold model for the classifying space $\underline{E}G$.
We provide an algorithm for computing the rationalized equivariant $K$-homology of $\underline{E}G$. Under the additional
hypothesis that the quotient $3$-orbifold $\underline{E}G/G$ is geometrizable, 
the rationalized $K$-homology groups coincide with the groups $K_*(C^*_{red}G)\otimes \mathbb Q$. 
We illustrate our algorithm on some concrete examples.

\end{abstract}

\maketitle

\section{Introduction}

We consider groups $G$ which have a cocompact, $3$-manifold model for the classifying space $\underline{E}G$. 
For such groups, we are interested in computing the equivariant $K$-homology of $\underline{E}G$. 
We develop an algorithm to compute the {\it rational}
equivariant $K$-homology groups. If in addition we assume that the quotient $3$-orbifold $\underline{E}G/G$ 
is geometrizable,  then $G$ satisfies
the Baum-Connes conjecture, and the rational equivariant $K$-homology groups coincide with 
the groups $K_*(C^*_{red}G)$. These are the rationalized (topological) $K$-theory groups of the 
reduced $C^*$-algebra of $G$. 

Some general recipes exist for computing the rational $K$-theory
of an arbitrary group (see L\"uck and Oliver \cite{LuO}, as well as L\"uck \cite{Lu1}, \cite{Lu2}). These
general recipes pass via the Chern character. They typically involve identifying certain 
conjugacy classes of cyclic subgroups, their centralizers, and certain (group) homology computations.

In contrast, our methods rely instead on the low-dimensionality of the model for the classifying space
$\underline{E}G$. Given a description of the model space $\underline{E}G$,
our procedure is entirely algorithmic, and returns the ranks of the $K$-homology groups. 

Let us briefly outline the contents of this paper. In Section 2, we 
provide some background material. Section 3 is devoted to explaining our algorithm, and
the requisite proofs showing that the algorithm gives the desired $K$-groups. In Section 4 
we implement our algorithm on several concrete classes of examples. 
Section 5 has some concluding remarks.



\section{Background material}

\subsection{$C^*$-algebra}

Given any discrete group $G$, one can form the associated reduced $C^*$-algebra. 
This Banach algebra is obtained by looking at the action $g\mapsto \lambda_g$ of $G$ on the Hilbert space
 $l^2(G)$ of square summable complex-valued functions on $G$, given by the left regular representation:
$$\lambda_g \cdot f(h) = f\big(g^{-1}h\big) \hskip 20pt g, h \in G, \hskip 20pt f\in l^2(G).$$
The algebra $C^*_r(G)$ is defined to be the operator norm closure of the linear span of the operators $\lambda_g$ inside the
space $B\big(l^2(G)\big)$ of bounded linear operators on $l^2(G)$. The Banach algebra $C^*_r(G)$ encodes various 
analytic properties of the group $G$. 

\subsection{Topological $K$-theory}

For a $C^*$-algebra $A$, the corresponding (topological) $K$-theory groups can be defined in the following manner.
The group $K_0(A)$ is defined to be the Grothendieck completion of the semi-group of finitely generated projective 
$A$-modules (with group operation given by direct sum). Since the algebra $A$ comes equipped with a topology, one
has an induced topology on the space $GL_n(A)$ of invertible $(n\times n)$-matrices with entries in $A$, and as such
one can consider the group $\pi_0\big(GL_n(A)\big)$ of connected components of $GL_n(A)$ (note that this is indeed 
a group, not just a set). The group $K_1(A)$ is defined to be $\lim \pi_0\big(GL_n(A)\big)$, where the limit is taken with
respect to the sequence of natural inclusions of $GL_n(A)\hookrightarrow GL_{n+1}(A)$. The higher $K$-theory groups
$K_q(A)$ are similarly defined to be $\lim \pi_{q-1}\big(GL_n(A)\big)$, for $q\geq 2$. Alternatively, one can identify
the functors $K_q(A)$ for all $q\in \mathbb Z$ via Bott $2$-periodicity in $q$, i.e.~$K_q(A)\cong K_{q+2}(A)$ for all 
$q$.

\subsection{Baum-Connes conjecture}

Let us now recall the statement of the Baum-Connes conjecture (see \cite{BCH}, \cite{DL}). 
Given a discrete group $G$, there exists 
a specific generalized equivariant homology theory having the property that, if one evaluates it on a point
$*$ with trivial $G$-action, the resulting homology groups satisfy $H_n^G(*)\cong K_n(C^*_r(G))$. Now for
any $G$-CW-complex $X$, one has an obvious equivariant map $X \rightarrow *$. It follows
from the basic properties of equivariant homology theories that there is an induced {\it assembly map}:
$$H_n^G(X) \rightarrow H_n^G(*)\cong K_n(C^*_r(G)).$$
Associated to a discrete group $G$, we have a classifying space for proper actions $\underline{E}G$. The
$G$-CW-complex $\underline{E}G$ is well-defined up to $G$-equivariant homotopy equivalence, and is 
characterized by the following two properties:
\begin{itemize}
\item if $H\leq G$ is any infinite subgroup of $G$, then $\underline{E}G^H =\emptyset$, and
\item if $H\leq G$ is any finite subgroup of $G$, then $\underline{E}G^H$ is contractible.
\end{itemize}
The Baum-Connes conjecture states that the assembly map 
$$H_n^G(\underline{E}G) \rightarrow H_n^G(*)\cong K_n(C^*_r(G))$$
corresponding to $\underline{E}G$ is an isomorphism. 
For a thorough discussion of this topic, we refer the reader to the book by Mislin and Valette \cite{MV} or the survey article 
by L\"uck and Reich \cite{LuR}.

\subsection{$3$-orbifold groups}\label{fund-dom}
We are studying groups $G$ having a cocompact $3$-manifold model for $\underline{E}G$. 
Let $X$ denote this specific model for the classifying
space, and for this section, we will further assume the 
quotient $3$-orbifold $X/G$ is {\it geometrizable}.

The validity of the Baum-Connes conjecture for fundamental groups of orientable $3$-manifolds
has been established by Matthey, Oyono-Oyono, and Pitsch \cite[Thm. 1.1]{MOP}
(see also \cite[Thm. 5.18]{MV} or \cite[Thm. 5.2]{LuR}). The same argument works in the context
of geometrizable $3$-orbifolds. We provide some details for the convenience of the reader.

\begin{lemma}\label{B-C-conj}
The Baum-Connes conjecture holds for the orbifold fundamental group of geometrizable $3$-orbifolds. 
\end{lemma}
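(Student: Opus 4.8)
The plan is to follow the strategy of Matthey, Oyono-Oyono and Pitsch \cite{MOP} for orientable $3$-manifolds, adapting each step to the orbifold setting. The key structural input is the geometric decomposition of a geometrizable $3$-orbifold $\mathcal{O}$: it can be cut along a canonical family of spherical and toral $2$-suborbifolds into pieces, each of which carries one of the eight Thurston geometries in the orbifold sense. On the level of orbifold fundamental groups this decomposition expresses $\pi_1^{\mathrm{orb}}(\mathcal{O})$ as the fundamental group of a finite graph of groups, whose vertex groups are the orbifold fundamental groups of the geometric pieces and whose edge groups are the orbifold fundamental groups of the splitting $2$-suborbifolds, i.e.\ finite extensions of $\mathbb{Z}^2$. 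First I would record this decomposition carefully, checking that the orbifold analogues of the sphere and torus decomposition theorems hold under the geometrizability hypothesis, so that $\pi_1^{\mathrm{orb}}(\mathcal{O})$ is realized as an iterated amalgamated product and HNN extension over the edge groups.

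Second, I would verify that each vertex group is a-T-menable (has the Haagerup property), proceeding geometry by geometry. For the geometries $S^3$, $S^2\times\mathbb{R}$, $E^3$, $\mathrm{Nil}$ and $\mathrm{Sol}$ the corresponding groups are virtually finite, virtually abelian, virtually nilpotent or virtually solvable, hence amenable and thus a-T-menable. For the remaining geometries $\mathbb{H}^3$, $\mathbb{H}^2\times\mathbb{R}$ and $\widetilde{\mathrm{PSL}}(2,\mathbb{R})$ the groups are, virtually, lattices in or central and finite extensions of lattices in rank-one Lie groups; since lattices in $\mathrm{SO}(n,1)$ and the ambient rank-one groups have the Haagerup property, and a-T-menability passes to such central extensions and to finite-index overgroups, these vertex groups are a-T-menable as well. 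The orbifold (rather than manifold) structure only introduces finite isotropy, which is harmless at this stage.

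Finally, I would assemble the pieces. By the Higson-Kasparov theorem every a-T-menable group satisfies the Baum-Connes conjecture \emph{with coefficients}; in particular every vertex group does. Oyono-Oyono's permanence theorem for groups acting on trees then shows that the fundamental group of the graph of groups---namely $\pi_1^{\mathrm{orb}}(\mathcal{O})$, and in the case of interest $\pi_1^{\mathrm{orb}}(X/G)$---again satisfies the Baum-Connes conjecture with coefficients, and specializing to trivial coefficients yields the ordinary Baum-Connes conjecture asserted in the statement. I expect the main obstacle to be the second step: pinning down a-T-menability uniformly across the fibered geometries $\mathbb{H}^2\times\mathbb{R}$ and $\widetilde{\mathrm{PSL}}(2,\mathbb{R})$, where one must control central extensions and Seifert-fibered orbifold groups, together with confirming that the edge groups arising from the geometric splitting are genuinely virtually $\mathbb{Z}^2$, so that the graph-of-groups permanence result applies without modification.
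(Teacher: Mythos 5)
Your proposal follows essentially the same route as the paper: decompose the orbifold fundamental group of a geometrizable $3$-orbifold as an iterated graph of groups with geometric vertex groups, invoke Oyono-Oyono's permanence theorem for the Baum--Connes conjecture with coefficients, and reduce to the Haagerup property (hence Higson--Kasparov) for cofinite-volume groups in each of the eight geometries, splitting them into the amenable ones and the hyperbolic-type ones exactly as the paper does. The one spot where the paper is more careful than your sketch is the step you yourself flag as the main obstacle: rather than appealing to a general claim that a-T-menability passes to central extensions (which is false in general), it handles $\widetilde{PSL}_2(\mathbb R)$ by citing that lattices in groups \emph{locally isomorphic} to $SO(n,1)$ are Haagerup, and handles $\mathbb H^2\times\mathbb E^1$ by using that the Haagerup property is preserved under amenable extensions of Haagerup groups.
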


\begin{proof}
In fact, the stronger {\it Baum Connes property with coefficients} holds for this class of groups. This property 
states that a certain assembly map, associated to a $G$-action on a separable $C^*$-algebra $A$, is an 
isomorphism (and recovers the classical Baum-Connes conjecture when $A= \mathbb C$). The coefficients
version has better inheritance properties, and in particular, is known to be inherited under graph of groups
constructions (amalgamations and HNN-extensions), see Oyono-Oyono \cite[Thm. 1.1]{O-O}. 

The orbifold fundamental group of a geometrizable $3$-orbifold can be expressed as an iterated graph of 
groups, with all initial vertex groups being orbifold fundamental groups of geometric $3$-orbifolds. Geometric
$3$-orbifolds are cofinite volume quotients of one of
the eight $3$-dimensional geometries. Combined with Oyono-Oyono's result, the Lemma reduces
to establishing the property for the orbifold fundamental group of finite volume geometric $3$-orbifolds.

The fundamental work of Higson and
Kasparov \cite{HK} established the Baum-Connes property with coefficients for all groups satisfying 
the Haagerup property. We refer the reader to the monograph \cite{CCJJV} for a detailed exposition to 
the Haagerup property. We will merely require the fact that groups acting with cofinite volume on all
eight $3$-dimensional geometries ($\mathbb E ^3$, $S^3$, $S^2\times \mathbb E^1$, $\mathbb H^3$, 
$\mathbb H^2 \times \mathbb E^1$, 
$\widetilde {PSL}_2(\mathbb R)$, $Nil$, and $Sol$) always have the Haagerup property, which will 
conclude the proof of the Lemma.

For the five geometries $\mathbb E ^3$, $S^3$, $S^2\times \mathbb E^1$, $Nil$, and $Sol$, any group 
acting on these will be amenable, and hence satisfy the Haagerup property. Lattices inside groups locally
isomorphic to $SO(n,1)$ are Haagerup (see \cite[Thm. 4.0.1]{CCJJV}), and hence groups acting on the
two geometries $\mathbb H^3$ and $\widetilde {PSL}_2(\mathbb R)$ are Haagerup. Finally, the Haagerup
property is inherited by amenable extensions of Haagerup groups (see \cite[Example 6.1.6]{CCJJV}). This
implies that groups acting on $\mathbb H^2\times \mathbb E^1$ are Haagerup, for any such group is a finite
extension of a group which splits as a product of $\mathbb Z$ with a lattice in $SO(2,1)$. This concludes
the proof of the Lemma.
\end{proof}

\noindent {\bf Remark:} If one assumes that the $G$-action is smooth and orientation preserving, then
Thurston's geometrization conjecture (now a theorem) predicts that $X/G$ is a geometrizable $3$-orbifold.
The proof of the orbifold version of the conjecture was originally outlined 
by Thurston, and was independently 
established by Boileau, Leeb, and Porti \cite{BLP} and Cooper, Hodgson, an Kerckhoff \cite{CHK} (both
loosely following Thurston's approach). The manifold version of the conjecture (i.e.~trivial isotropy groups)
is of course due to the recent work of Perelman. 

\vskip 5pt

\noindent {\bf Remark:} If the quotient space $X/G$ is not known to be geometrizable (for instance, if the
$G$-action is not smooth, or does not preserve the orientation), then the argument in Lemma \ref{B-C-conj} 
does not apply. Nevertheless, our algorithm
can still be used to compute the rational equivariant $K$-homology of $\underline{E}G$. It is however 
no longer clear that this coincides with $K(C^*_r(G))\otimes \mathbb Q$.

\subsection{Polyhedral CW-structures}\label{polyhedral}
Let us briefly comment on the $G$-CW-structure of $X$. As the quotient space $X/G$ is a connected $3$-orbifold,
we can assume without loss of generality that 
the CW-structure contains a single orbit of $3$-cell. Taking a representative $3$-cell $\sigma$ 
for the unique $3$-cell orbit, we observe that the closure of $\sigma$ must {\it contain} representatives
of each lower dimensional orbit of cells. Indeed, if some lower dimensional cell had no orbit representatives
contained in $\bar \sigma$, then there would be points in that lower dimensional cell with no neighborhood
homeomorphic to $\mathbb R^3$. Pulling back the $2$-skeleton of the CW-structure via the attaching
map of the $3$-cell $\sigma$, we obtain (i) a decomposition of the $2$-sphere into the pre-images of the
individual cells, and (ii) an equivalence
relation on the $2$-sphere, identifying together points which have the same image under the attaching map. 
We note that the quotient space $X/G$ can be reconstructed from this data. If in addition we know the isotropy
subgroups of points, then $X$ itself can be reconstructed from $X/G$. We will 
assume that we are given the $G$-action on $X$, in the form of a partition and equivalence relation on the
$2$-sphere as above, along with the isotropy data.

In some cases, one can find a $G$-CW-structure which is particularly simple: the
$2$-sphere coincides with the boundary of a polyhedron, the partition of the $2$-sphere is into the faces
of the polyhedron, and the equivalence relation linearly
identifies together faces of the polyhedron. More precisely, we make the:

\begin{definition}
A {\it polyhedral} CW-structure is a CW-structure where each cell is identified with the interior of a polyhedron 
$P_i\cong \mathbb D^k$,
and the attaching maps from the boundary $\partial \mathbb D^k \cong \partial P_i$ of 
a $k$-cell to the $(k-1)$-skeleton, when restricted to each
$s$-dimensional face of $\partial P_i$, is a combinatorial homeomorphism onto an $s$-cell in the
$(k-1)$-skeleton.
\end{definition}

In the case where there is a polyhedral $G$-CW-structure on $X$ with a single $3$-cell orbit, then our
algorithms are particularly easy to implement. All the concrete examples we will see in Section 4 come 
equipped with a polyhedral $G$-CW-structure. 

\vskip 10pt

\noindent {\bf Remark:} It seems plausible that, if a $G$-CW-structure exists for a (topological) $G$-action on a 
$3$-manifold $X$, then a polyhedral
$G$-CW-complex structure should also exist. It also seems likely that, if a polyhedral $G$-CW-structure exists,
then the $G$-action on the $3$-manifold $X$ should be smoothable. 

For some concrete examples of  polyhedral $G$-CW-structures, consider the case where $X$ is either 
hyperbolic space $\mathbb H^3$ or Euclidean space $\mathbb R^3$, and the $G$-action is via isometries. Then the desired 
$G$-equivariant polyhedral CW-complex structure can be obtained by picking a suitable point $p\in X$, and considering
the Voronoi diagram with respect to the collection of points in the orbit $G\cdot p$. Another example, where $X$ is the
$3$-dimensional Nil-geometry is discussed in Section \ref{Nil-mfld-example}. 

\vskip 10pt



\section{The algorithm} \label{algorithm}

In this section, we describe the algorithm used to perform our computations. Throughout this section, 
let $G$ be a group with a smooth action on a $3$-manifold, providing a model for $\underline{E}G$.
We will assume that $\underline{E}G$ supports a polyhedral $G$-CW-structure, and that 
$P$ is a fundamental domain for the $G$-action on $X$, as described in Section \ref{polyhedral}.
So $P$ is the polyhedron corresponding to the single $3$-cell orbit, and the orbit space $\underline{B}G$
is obtained from $P$ by identifying various boundary faces together. 
We emphasize that the polyhedral $G$-CW-structure assumption serves only to facilitate the exposition:
the algorithm works equally well with an arbitrary $G$-CW-structure.


\subsection{Spectral sequence analysis}
As explained in the previous section, the Baum-Connes conjecture provides an isomorphism:
$$H_n^G(\underline{E}G) \rightarrow H_n^G(*)\cong K_n(C^*_r(G)).$$
We are interested in computing the equivariant homology 
group arising on the left hand side of the assembly map. Since our group $G$ is $3$-dimensional, 
we will let $X$ denote the $3$-dimensional manifold model for $\underline{E}G$. 
To compute the equivariant 
homology of $X$, one can use an Atiyah-Hirzebruch spectral sequence. Specifically,
there exists a spectral sequence (see \cite{DL}, or \cite[Section 8]{Q}), converging to the group $H_n^G(X)$,
with $E^2$-terms obtained by taking the homology of the following chain complex:
\begin{equation}\label{chain-clx}
\ldots \rightarrow \bigoplus _{\sigma\in (X/G)^{(p+1)}} K_q\big(C^*_r(G_{\sigma})\big)
\rightarrow \bigoplus _{\sigma\in (X/G)^{(p)}} K_q\big(C^*_r(G_{\sigma})\big)
\rightarrow \ldots
\end{equation}
In the above chain complex, $(X/G)^{(i)}$ consists of $i$-dimensional cells in the quotient $X/G$,
or equivalently, $G$-orbits of $i$-dimensional cells in $X$. The groups 
$G_\sigma$ denote the stabilizer of a cell in the orbit $\sigma$. Since our space $X$ is $3$-dimensional,
we see that our chain complex can only have non-zero terms in the range $0 \leq p \leq 3$
(the morphisms in the chain complex will be described later, see Section \ref{h3:section}). Moreover, 
since $X$ is a model for 
$\underline{E}G$, all the cell stabilizers $G_\sigma$ must be finite subgroups of $G$. For $F$ a finite group,
the groups $K_q\big(C^*_r(F)\big)$ are easy to compute:
$$
K_q\big(C^*_r(F)\big) = 
\begin{cases} 0 & \text{if $q$ is odd,} \\
\mathbb Z ^ {c(F)} & \text{if $q$ is even.}
\end{cases}
$$
Here, $c(F)$ denotes the number of conjugacy classes of elements in $F$. In fact, for $q$ even, 
$K_q\big(C^*_r(F)\big)$ can be identified with the complex representation ring of $F$.
This immediately tells us that $E^2_{pq} = 0$ for $q$ odd. 
We will denote by $\mathcal C$ the chain complex in equation (\ref{chain-clx}) corresponding to the case 
where $q$ is even. 
By the discussion above, we know that $H_p(\mathcal C)=0$ except possibly in the range 
$0 \leq p \leq 3$. We summarize this discussion in the:

\vskip 10pt

\noindent {\bf Fact 1:} The only potentially non-vanishing terms on the $E^2$-page (and hence any $E^k$-page, 
$k\geq 2$)  occur when $0\leq p \leq 3$ {\em and} $q$ is even.

\vskip 10pt

Next we note that the differentials on the $E^k$-page of the spectral sequence have bidegree $(-k, k+1)$, i.e. are
of the form 
$d^k_{p,q}: E^k_{p,q}\rightarrow E^k_{p-k, q+k-1}$. When $k=2$, alternating rows on the $E^2$-page are zero
(see {\bf Fact 1}), which implies that $E^3_{p,q} = E^2_{p,q}$. When $k=3$,
the differentials $d^3_{p,q}$ shift horizontally by three units, and up by two units. So the only potentially
non-zero differentials on the $E^3$-page are (up to vertical translation by the $2$-periodicity in $q$) those of the 
form:
$$d^3_{3,0}:  E^2_{3,0} \cong E^3_{3,0} \rightarrow E^3_{0,2} \cong E^2_{0,0}.$$

Once we have $k\geq 4$, the
differentials $d^k_{p,q}$ shift horizontally by $k\geq 4$ units.
But {\bf Fact 1} tells us that the only non-zero terms occur in the vertical strip $0\leq p \leq 3$, which forces 
$E^4_{p,q} \cong E^5_{p,q} \cong \ldots$ for all $p,q$. 
In other words, the spectral sequence collapses at the $E^4$-stage. Since the $E^2$-terms are
given by the homology of $\mathcal C$, this establishes:

\begin{lemma}\label{spec-seq-collapse}
The groups $K_q\big(C^*_r(G)\big)$ can be computed from the $E^4$-page of the spectral sequence, and coincide
with
$$
K_q\big(C^*_r(G)\big) = 
\begin{cases} H_{1}(\mathcal C) \oplus \ker (d^3_{3,0})& \text{if $q$ is odd,} \\
\coker (d^3_{3,0}) \oplus H_{2}(\mathcal C) & \text{if $q$ is even,}
\end{cases}
$$
where $d^3_{3,0}: H_{3}(\mathcal C) \rightarrow H_{0}(\mathcal C)$ is the differential appearing
on the $E^3$-page of the spectral sequence.
\end{lemma}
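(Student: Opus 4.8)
The plan is to run the spectral sequence of (\ref{chain-clx}) to its $E^\infty=E^4$-page, identify each surviving term, and then confront the extension problem in the abutment. Throughout I would use that the spectral sequence converges to $H_n^G(X)\cong K_n(C^*_r(G))$, the latter isomorphism being the Baum--Connes assembly map (Lemma \ref{B-C-conj}), so that for each $n$ the group $K_n(C^*_r(G))$ carries a finite (skeletal) filtration whose associated graded is $\bigoplus_{p+q=n}E^\infty_{p,q}$.

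First I would pin down the $E^\infty$-page. By {\bf Fact 1} the $E^2=E^3$-page is supported in the strip $0\le p\le 3$ with $q$ even, and Bott $2$-periodicity identifies every even row with the homology of $\mathcal C$, so $E^3_{p,q}=H_p(\mathcal C)$ for $q$ even and $0\le p\le 3$. The only possibly nonzero differential here is $d^3$, of bidegree $(-3,2)$; in each even row it is the single map $d^3_{3,0}\colon H_3(\mathcal C)\to H_0(\mathcal C)$ already isolated in the excerpt, and periodicity makes all these copies equal. Taking homology with respect to $d^3$ leaves the middle columns untouched, since no $d^3$ enters or exits them: thus $E^4_{1,q}=H_1(\mathcal C)$ and $E^4_{2,q}=H_2(\mathcal C)$ for $q$ even, while $E^4_{3,q}=\ker(d^3_{3,0})$ and $E^4_{0,q}=\coker(d^3_{3,0})$. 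Since all higher differentials shift horizontally by at least $4$ and the strip is only $4$ columns wide, they vanish, so $E^4=E^\infty$.

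Next I would read off the abutment one total degree at a time. For $n$ odd the condition $p+q=n$ with $q$ even forces $p\in\{1,3\}$, contributing $E^\infty_{1,n-1}=H_1(\mathcal C)$ and $E^\infty_{3,n-3}=\ker(d^3_{3,0})$; for $n$ even it forces $p\in\{0,2\}$, contributing $E^\infty_{0,n}=\coker(d^3_{3,0})$ and $E^\infty_{2,n-2}=H_2(\mathcal C)$. In each degree only two graded pieces survive, so the filtration collapses to a single short exact sequence, e.g.\ $0\to H_1(\mathcal C)\to K_n(C^*_r(G))\to\ker(d^3_{3,0})\to 0$ for $n$ odd and the analogous sequence $0\to\coker(d^3_{3,0})\to K_n(C^*_r(G))\to H_2(\mathcal C)\to 0$ for $n$ even.

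The one genuine subtlety---and the step I expect to be the main obstacle---is precisely this extension problem: a priori the spectral sequence computes only the associated graded, not the isomorphism type of the abutment. I would resolve it by noting that it is harmless for the paper's purposes. Ranks are additive along short exact sequences, so the stated formulas hold on the nose at the level of ranks; and after rationalizing, every such sequence of $\mathbb Q$-vector spaces splits, yielding the displayed direct-sum decompositions of $K_q(C^*_r(G))\otimes\mathbb Q$. Attempting to split the integral sequences directly would require extra input and is unnecessary for the rational equivariant $K$-homology that the algorithm is designed to output.
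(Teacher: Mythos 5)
Your proposal follows essentially the same route as the paper: the lemma is justified there by the preceding discussion establishing $E^2=E^3$ via \textbf{Fact 1}, isolating $d^3_{3,0}$ as the only possibly nonzero higher differential, and observing collapse at $E^4$ because the support is confined to the strip $0\le p\le 3$. The one place you go beyond the paper is in flagging the extension problem in the abutment: the paper asserts the integral direct-sum decomposition without comment, whereas you correctly observe that the spectral sequence a priori only yields short exact sequences such as $0\to H_1(\mathcal C)\to K_n(C^*_r(G))\to\ker(d^3_{3,0})\to 0$, and that this already suffices for the rank and rationalized statements actually used downstream (Lemma \ref{rational-expression}). That is a fair and worthwhile caveat, but it does not change the substance of the argument.
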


Since we are only interested in the rationalized equivariant $K$-homology, we can actually ignore the presence of any differentials: 
after tensoring with $\mathbb Q$ the Atiyah-Hirzebruch spectral sequence collapses at the $E^2$-page \cite[Remark 3.9]{Lu1}. 
Thus for $q$ even,
\[
	K_q\big(C^*_r(G)\big)\otimes \mathbb Q \cong \left(E^2_{0,q}\otimes \mathbb Q\right) \oplus \left( E^2_{2,q-2}\otimes 
	\mathbb Q \right) \cong \left(H_{0}(\mathcal C) \otimes \mathbb Q\right) \oplus \left( H_{2}(\mathcal C) \otimes \mathbb Q\right),
\]
and for $q$ odd,
\[
	K_q\big(C^*_r(G)\big)\otimes \mathbb Q \cong \left(E^2_{1,q-1}\otimes \mathbb Q\right) \oplus \left( E^2_{3,q-3}\otimes 
	\mathbb Q \right) \cong \left(H_{1}(\mathcal C) \otimes \mathbb Q\right) \oplus \left( H_{3}(\mathcal C) \otimes \mathbb Q\right).
\]
\begin{lemma}\label{rational-expression}
The rank of the groups $K_q\big(C^*_r(G)\big)\otimes \mathbb Q$ are given by
$$
\rank \Big(K_q\big(C^*_r(G)\big)\otimes \mathbb Q \Big) = 
\begin{cases} \rank \big(H_{1}(\mathcal C)\otimes \mathbb Q \big) + \rank \big(H_{3}(\mathcal C)\otimes \mathbb Q \big) & 
\text{if $q$ is odd,} \\
\rank \big(H_{0}(\mathcal C)\otimes \mathbb Q \big) + \rank \big(H_{2}(\mathcal C)\otimes \mathbb Q \big) & \text{if $q$ is even.}
\end{cases}
$$
\end{lemma}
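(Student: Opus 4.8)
The plan is to recognize that Lemma~\ref{rational-expression} is essentially a bookkeeping corollary of the two displayed isomorphisms immediately preceding it, so the real content has already been assembled. First I would observe that the statement concerns only ranks, and that rank is additive over direct sums of finitely generated abelian groups; thus it suffices to read off the rank of each summand appearing on the right-hand side of the two rational isomorphisms. For $q$ even, the displayed computation gives
\[
	K_q\big(C^*_r(G)\big)\otimes \mathbb Q \cong \big(H_{0}(\mathcal C)\otimes \mathbb Q\big) \oplus \big(H_{2}(\mathcal C)\otimes \mathbb Q\big),
\]
and taking ranks of both sides yields the even case. Symmetrically, for $q$ odd the displayed isomorphism involving $H_1(\mathcal C)$ and $H_3(\mathcal C)$ gives the odd case. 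So the skeleton of the argument is simply: invoke the preceding isomorphisms, then apply additivity of rank.

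The substantive input I would make sure to cite is the collapse of the rationalized Atiyah--Hirzebruch spectral sequence at the $E^2$-page, which is exactly what licenses replacing the groups $K_q(C^*_r(G))\otimes\mathbb Q$ by the direct sum of the surviving $E^2$-terms in the relevant diagonal. This is the step that, in the integral setting of Lemma~\ref{spec-seq-collapse}, required tracking the differential $d^3_{3,0}$ and its kernel and cokernel; after tensoring with $\mathbb Q$ that differential becomes a map between finite-dimensional $\mathbb Q$-vector spaces that must vanish by the cited rational collapse \cite[Remark 3.9]{Lu1}, so no kernel/cokernel corrections appear. I would emphasize that it is precisely this vanishing that makes the rank formula clean: the four nonzero $E^2$-terms on each diagonal (two per parity, by \textbf{Fact 1}) survive unchanged to $E^\infty$, and there are no extension problems for ranks since rank is additive on any short exact sequence of finitely generated abelian groups regardless of whether it splits.

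The only point requiring a word of care is the identification of \emph{which} $E^2$-terms contribute to each parity. By \textbf{Fact 1} the surviving terms lie in the strip $0\le p\le 3$ with $q$ even, and by Bott $2$-periodicity in the vertical direction the diagonal $p+q$ fixed modulo $2$ picks out $H_0(\mathcal C)$ and $H_2(\mathcal C)$ for even total degree and $H_1(\mathcal C)$ and $H_3(\mathcal C)$ for odd total degree; this is already recorded in the two displayed isomorphisms, so I would simply point back to them rather than re-deriving the indexing. I do not anticipate a genuine obstacle here: the main theorem-level work (the spectral sequence setup, \textbf{Fact 1}, and the rational collapse) is done upstream, and Lemma~\ref{rational-expression} is the immediate rank-theoretic consequence. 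If anything, the subtlest book\-keeping point is making explicit that $\rank(H_i(\mathcal C)\otimes\mathbb Q)=\dim_{\mathbb Q}(H_i(\mathcal C)\otimes\mathbb Q)$ equals the free rank of $H_i(\mathcal C)$, so that the formula can equivalently be stated in terms of the Betti numbers of the chain complex $\mathcal C$.
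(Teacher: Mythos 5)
Your proposal is correct and follows essentially the same route as the paper: the displayed rational isomorphisms preceding the lemma are obtained exactly by invoking the collapse of the rationalized Atiyah--Hirzebruch spectral sequence at the $E^2$-page (citing \cite[Remark 3.9]{Lu1}), identifying the surviving terms on each diagonal via \textbf{Fact 1}, and then reading off ranks by additivity. Your added remark that the differential $d^3_{3,0}$ of Lemma~\ref{spec-seq-collapse} necessarily vanishes after tensoring with $\mathbb Q$ is a correct and harmless elaboration of the same argument.
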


\vskip 5pt

\noindent{\bf Remark:} Alternatively this result follows directly from the equivariant Chern character being a rational isomorphism \cite[Thm.~6.1]{MV}.

\vskip 5pt

In the next four sections, we explain how to algorithmically compute the ranks of the four groups 
appearing in Lemma \ref{rational-expression}.


\subsection{1-skeleton of $X/G$ and the group $H_0(\mathcal C)$}

For the group $H_{0}(\mathcal C)$, we make use of the result from \cite[Theorem 3.19]{MV}. For the convenience
of the reader, we restate the theorem:

\begin{Thm}
For $G$ an arbitrary group, we have
$$H_{0}(\mathcal C) \otimes \mathbb Q \cong \mathbb Q^{cf(G)},$$ 
where $cf(G)$ denotes the number of conjugacy classes of elements of finite order in the group $G$.
\end{Thm}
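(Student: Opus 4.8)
The plan is to compute $H_0(\mathcal C)$ directly as the cokernel of the boundary map $\partial_1 \colon \bigoplus_{e\in (X/G)^{(1)}} R(G_e) \to \bigoplus_{v\in (X/G)^{(0)}} R(G_v)$, where (as recorded above) each group $K_q(C^*_r(G_\sigma))$ with $q$ even is the complex representation ring $R(G_\sigma)$, and the maps of the chain complex $\mathcal C$ are the induction (transfer) maps attached to the inclusions of stabilizers, as in the Davis--Lück construction. Since the assertion $H_0(\mathcal C)\otimes \mathbb Q \cong \mathbb Q^{cf(G)}$ is a statement about ranks, I would extend scalars to $\mathbb C$ and work with $H_0(\mathcal C)\otimes \mathbb C$, whose dimension equals $\rank(H_0(\mathcal C)\otimes \mathbb Q)$. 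The first key step is to replace the representation-ring basis by the indicator functions of conjugacy classes: under the standard identification $R(F)\otimes \mathbb C \cong \{\text{class functions on }F\}$, a short computation with the induced-character formula shows that for $H\leq K$ one has $\Ind_H^K(\mathbf 1_{[h]_H}) = c_{h}\,\mathbf 1_{[h]_K}$ with $c_h = |C_K(h)|/|C_H(h)| \neq 0$. In other words, in the indicator-function basis induction is a nonzero scalar multiple of the push-forward of conjugacy classes $[h]_H \mapsto [h]_K$.

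Consequently $\mathcal C \otimes \mathbb C$ becomes, up to harmless nonzero rescalings of basis vectors, the chain complex whose $0$-chains are freely spanned by pairs $(v,[h]_{G_v})$ (a vertex orbit together with a conjugacy class of its stabilizer) and whose $\partial_1$ identifies, for each edge orbit $e$ with endpoints $v_0,v_1$ and each $h\in G_e$, the classes $[h]_{G_{v_0}}$ and $[h]_{G_{v_1}}$. The rescalings do not affect the dimension of the cokernel, so $H_0(\mathcal C)\otimes \mathbb C$ is the free $\mathbb C$-vector space on $\coprod_v \mathrm{conj}(G_v)$ modulo the relations $(v_0,[h])\sim (v_1,[h])$ coming from edges fixed by $h$. (Here I use that the $G$-action already identifies $(v,[h]_{G_v})$ with $(gv,[ghg^{-1}]_{G_{gv}})$, so $G$-conjugation is built into the indexing by orbits.) I would then introduce the comparison map $\Psi$ sending the class of $(v,[h]_{G_v})$ to the $G$-conjugacy class $[h]_G$ of the finite-order element $h$, landing in the free $\mathbb C$-vector space on conjugacy classes of elements of finite order in $G$.

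The remaining task is to show that $\Psi$ is an isomorphism, which reduces to a bijection of indexing sets. Well-definedness is immediate, since the defining relations only link pairs with the same image $[h]_G$. For surjectivity I would use that every finite-order $h\in G$ has $\underline EG^{\langle h\rangle} = X^{\langle h\rangle}$ contractible, hence nonempty; choosing a CW-structure in which cell stabilizers fix their cells pointwise, $X^{\langle h\rangle}$ is a subcomplex containing a vertex $v$ with $h\in G_v$, so $[h]_G$ lies in the image. The crux is injectivity. Given two pairs mapping to $[h]_G$, I would first use the $G$-conjugation built into the orbit indexing to arrange that both pairs involve the element $h$ itself, sitting at vertices $v,w\in X^{\langle h\rangle}$. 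Now connectivity does the work: $X^{\langle h\rangle}$ is contractible, so $v$ and $w$ are joined by an edge-path inside $X^{\langle h\rangle}$, and every edge $e$ of this path satisfies $h\in G_e$ and hence contributes exactly the relation identifying $[h]$ across its endpoints. Chaining these relations shows the two classes agree in $H_0(\mathcal C)\otimes \mathbb C$.

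Finally, cocompactness of $X/G$ together with finiteness of all stabilizers gives $cf(G)\leq \sum_\sigma c(G_\sigma) < \infty$, so $\Psi$ exhibits an isomorphism $H_0(\mathcal C)\otimes \mathbb C \cong \mathbb C^{cf(G)}$; taking dimensions yields $\rank(H_0(\mathcal C)\otimes \mathbb Q) = cf(G)$ and hence the claimed $H_0(\mathcal C)\otimes \mathbb Q \cong \mathbb Q^{cf(G)}$. I expect the main obstacle to be the injectivity step: it is precisely where the defining property of $\underline EG$ --- contractibility, and in particular connectivity, of the fixed-point sets of finite subgroups --- is indispensable, and where one must be careful that the CW-structure is chosen so that fixed-point sets are honest subcomplexes and so that edge stabilizers interact correctly with the push-forward relations.
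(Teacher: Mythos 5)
Your argument is correct, but note that the paper does not actually prove this statement: it is quoted from \cite[Theorem 3.19]{MV}, and the paper's own contribution is the companion Lemma \ref{rank-K0}, showing that $cf(G)$ equals the number $eq(G)$ of $\sim$-equivalence classes on the set $F(G)$ of stabilizer elements over vertices. Your proof is a self-contained substitute for the citation together with that lemma. The second half of what you write --- surjectivity via non-emptiness of fixed sets of finite-order elements, injectivity via connectivity of $X^{\langle h\rangle}$ and pushing a path into its $1$-skeleton --- is essentially identical to the paper's proof of Lemma \ref{rank-K0}. What you add is the first half: the identification of $\dim_{\mathbb C}\big(H_0(\mathcal C)\otimes\mathbb C\big)$ with the number of $\sim$-classes, using that in the basis of indicator functions of conjugacy classes the induction map satisfies $\Ind_H^K(\mathbf 1_{[h]_H})=\big(|C_K(h)|/|C_H(h)|\big)\,\mathbf 1_{[h]_K}$; that computation is correct. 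The one point you leave implicit is that the nonzero scalars can be \emph{simultaneously} normalized away: a priori a cycle of relations of the form $c_1w_1=c_0w_0$ could force $w=\lambda w$ with $\lambda\neq 1$ and kill a component, so ``harmless rescaling'' needs a word of justification. It does work out --- replacing each basis vector $\mathbf 1_{[h]_{G_v}}$ by $|C_{G_v}(h)|\cdot\mathbf 1_{[h]_{G_v}}$ turns every edge relation into an honest identification $w-w'$ --- and with that detail supplied your argument gives a clean combinatorial proof, in the cocompact low-dimensional setting, of the result the paper only cites.
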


This reduces the computation of the rank of $H_{0}(\mathcal C) \otimes \mathbb Q$ to finding some
algorithm for computing the number $cf(G)$. We now explain how one can 
compute the integer $cf(G)$ in terms of the $1$-skeleton of the space $X/G$. 

\vskip 5pt

For each cell
$\sigma$ in $\underline{B}G$, we fix a reference cell $\tilde \sigma \in \underline{E}G$, having the property that
$\tilde \sigma$ maps to $\sigma$ under the quotient map $p: \underline{E}G \rightarrow \underline{B}G$.
Associated to each cell $\sigma$
in $\underline{B}G$, we have a finite subgroup $G_{\tilde \sigma} \leq G$, which is just the stabilizer of the fixed
pre-image $\tilde \sigma \in \underline{E}G$. Since the stabilizers of two distinct lifts $\tilde \sigma$, 
$\tilde \sigma ^\prime$ of the cell $\sigma$ are conjugate subgroups inside $G$, we note that the {\it conjugacy class}
of the finite subgroup $G_{\tilde \sigma}$ is independent of the choice of lift $\tilde \sigma$, and depends solely
on the cell $\sigma\in\underline{B}G$.
Now given a cell $\sigma$ in $\underline{B}G$ with a boundary 
cell $\tau$, we have associated lifts $\tilde \sigma$, $\tilde \tau$. Of course, the lift $\tilde \tau$ might not lie in
the boundary of $\tilde \sigma$, but there exists some other lift $\tilde \tau ^\prime$ of $\tau$ which {\it does} lie in
the boundary of $\tilde \sigma$. Clearly, we have an inclusion $G_{\tilde \sigma} \hookrightarrow G_{\tilde \tau ^\prime}$.
Fix an element $g_{\sigma, \tau} \in G$ with the property that $g_{\sigma, \tau}$ maps the lift $\tilde \tau ^\prime$
to the lift $\tilde \tau$. This gives us a map $\phi_{\sigma}^\tau:G_{\tilde \sigma} \hookrightarrow G_{\tilde \tau}$, obtained by
composing the inclusion $G_{\tilde \sigma} \hookrightarrow G_{\tilde \tau ^\prime}$ with the isomorphism 
$G_{\tilde \tau ^\prime} \rightarrow G_{\tilde \tau}$ given by conjugation by $g_{\sigma, \tau}$. Now the map 
$\phi_{\sigma}^\tau$ isn't well-defined, as there are different possible choices for the element $g_{\sigma, \tau}$.
However, if $g_{\sigma, \tau}^\prime$ represents a different choice of element, then since both elements
$g_{\sigma, \tau}, g_{\sigma, \tau}^\prime$ map $\tilde \tau ^\prime$ to $\tilde \tau$, we see that the product
$\big(g_{\sigma, \tau}^\prime\big) \big(g_{\sigma, \tau})^{-1}$ maps $\tilde \tau$ to itself, and hence we
obtain the equality $g_{\sigma, \tau}^\prime = h \cdot g_{\sigma, \tau}$, where $h\in G_{\tilde \tau}$.
This implies that the map $\phi_{\sigma}^\tau$ is well-defined, up to post-composition by an inner automorphism
of $G_{\tilde \tau}$.

Consider the set $F(G)$ consisting of the disjoint union of the finite groups $G_{\tilde v}$ where
$v$ ranges over vertices in the $0$-skeleton $(\underline{B}G)^{(0)}$ of $\underline{B}G$. 
Form the smallest equivalence relation $\sim$ on $F(G)$ with the property that:
\begin{enumerate}[(i)]
\item for each vertex $v\in (\underline{B}G)^{(0)}$, and elements $g, h\in G_{\tilde v}$ which are conjugate 
within $G_{\tilde v}$, we have $g\sim h$, and
\item for each edge $e\in (\underline{B}G)^{(1)}$ joining vertices $v, w\in (\underline{B}G)^{(0)}$, and element
$g\in G_{\tilde e}$, we have $\phi_e^v(g)\sim \phi_e^w(g)$.
\end{enumerate}
Note that, although the maps $\phi_{\sigma}^\tau$ are not well-defined, the equivalence relation given above {\it is}
well-defined. Indeed, for any given edge $e\in (\underline{B}G)^{(1)}$, the maps $\phi_e^v$, $\phi_e^w$ are only well-defined
up to inner automorphisms of $G_{\tilde v}, G_{\tilde w}$. In view of property (i), the resulting property (ii) is independent 
of the choice of representatives $\phi_e^v$, $\phi_e^w$.

For a finitely generated group, we let $eq(G)$ denote the number of $\sim$ equivalence classes on the corresponding
set $F(G)$. We can now establish:

\begin{lemma}\label{rank-K0}
For $G$ an arbitrary finitely generated group, we have $cf(G) = eq(G)$.
\end{lemma}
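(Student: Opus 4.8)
The plan is to exhibit an explicit bijection between the set of $\sim$-equivalence classes on $F(G)$ and the set of conjugacy classes of finite order elements of $G$, which immediately yields $eq(G) = cf(G)$. Define a map $\Psi \colon F(G) \to \{\text{conjugacy classes of finite order elements of } G\}$ by sending an element $g \in G_{\tilde v} \subseteq F(G)$ to its $G$-conjugacy class $[g]_G$; this makes sense because $G_{\tilde v}$ is finite, so every such $g$ has finite order. I would then establish three things: that $\Psi$ factors through $\sim$, that the induced map $\bar\Psi$ is surjective, and that $\bar\Psi$ is injective.

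Well-definedness is routine. Relation (i) identifies elements already conjugate inside $G_{\tilde v}$, hence conjugate in $G$. For relation (ii), the very construction of $\phi_e^v$ and $\phi_e^w$ shows that both $\phi_e^v(g)$ and $\phi_e^w(g)$ are obtained from $g \in G_{\tilde e}$ by conjugation in $G$ (by $g_{\sigma,\tau}$-type elements), so they represent the same class $[g]_G$. For surjectivity, I would use that $X = \underline{E}G$: given a finite order element $g$, the subgroup $\langle g \rangle$ is finite, so the fixed set $X^{\langle g\rangle} = X^g$ is contractible, in particular nonempty. Since the cell stabilizers of a $G$-CW-complex fix their cells pointwise, $g$ fixes every point of the closed cell containing a fixed point in its interior; as the closure of any cell in the polyhedral structure contains vertices of $X$, the element $g$ fixes some vertex $u$ of $X$. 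Translating $u$ to the reference lift $\tilde v$ of its image in $\underline{B}G$ by some $k \in G$ conjugates $g$ into $G_{\tilde v} \subseteq F(G)$ without changing $[g]_G$, so $\bar\Psi$ is onto.

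The injectivity of $\bar\Psi$ is the crux. Suppose $g \in G_{\tilde v}$ and $h \in G_{\tilde w}$ satisfy $h = kgk^{-1}$ for some $k \in G$; I must show $g \sim h$. The key geometric input is that $h$ fixes both vertices $k\tilde v$ and $\tilde w$, and that $X^{\langle h\rangle}$ is contractible, hence a connected subcomplex of $X$. A connected CW-complex has connected $1$-skeleton, so there is an edge-path $k\tilde v = u_0, u_1, \dots, u_n = \tilde w$ inside $X^{h}$, with each consecutive pair joined by an edge $\tilde\epsilon_i$ satisfying $h \in G_{\tilde\epsilon_i}$. To convert this path into a chain of $\sim$-relations, I would record, for a vertex $u$ of $X$ and $a \in G_u$, the well-defined class $\langle u, a\rangle \in F(G)/\!\sim$ obtained by conjugating $a$ into the reference stabilizer of the image vertex (well-defined by relation (i)). The main auxiliary claim is an \emph{edge compatibility}: if $u_0, u_1$ are the endpoints of an edge $\tilde\epsilon$ and $b \in G_{\tilde\epsilon}$, then $\langle u_0, b\rangle = \langle u_1, b\rangle$. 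This is exactly relation (ii) transported to $\tilde\epsilon$: translating $\tilde\epsilon$ to the reference lift $\tilde e$ by some $q \in G$ and unwinding the definitions of $\phi_e^v$, $\phi_e^w$ shows both sides equal $[\phi_e^v(qbq^{-1})]_\sim = [\phi_e^w(qbq^{-1})]_\sim$. Applying edge compatibility along the path gives $\langle k\tilde v, h\rangle = \langle \tilde w, h\rangle$; since $\langle \tilde w, h\rangle = [h]_\sim$ and $\langle k\tilde v, h\rangle = [k^{-1}hk]_\sim = [g]_\sim$, we conclude $g \sim h$.

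The main obstacle is this last direction: one must package the connectivity of the fixed-point set into an algebraic statement, and the bookkeeping of reference lifts against the non-canonical maps $\phi_e^v$ (well-defined only up to inner automorphisms) requires care — it is precisely property (i) that makes the intermediate classes $\langle u, a\rangle$ well-defined and renders the edge compatibility independent of the choices made. The geometric facts needed (fixed sets are contractible subcomplexes, stabilizers fix cells pointwise, every cell closure meets the $0$-skeleton) are exactly the features guaranteed by the $\underline{E}G$ and polyhedral $G$-CW hypotheses.
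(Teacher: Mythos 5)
Your proof is correct and follows essentially the same route as the paper's: the paper phrases it as showing that the relation $\sim$ coincides with $G$-conjugacy $\approx$ on $F(G)$ and that every finite-order element is $\approx$-equivalent to something in $F(G)$, which is exactly your surjectivity and injectivity of $\bar\Psi$, and the key geometric steps (a fixed vertex from the nonempty fixed set, an edge-path in the $1$-skeleton of the contractible fixed set, transport along edges via relation (ii) with relation (i) absorbing the inner-automorphism ambiguity) are identical. Your explicit bookkeeping via the classes $\langle u,a\rangle$ just makes precise what the paper's proof leaves implicit.
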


\begin{proof}
Let us write $g \approx h$ if the elements $g$, $h$ are conjugate in $G$. As each element in $F(G)$ is also an element in $G$, 
we now have the two equivalence relations $\sim, \approx$ on the set $F(G)$. It is immediate from the definition 
that $g\sim h$ implies $g\approx h$. 

Next, we argue that, for elements $g,h \in F(G)$, $g\approx h$ implies $g\sim h$. 
To see this, assume that $k \in G$ is a conjugating element, so $g= khk^{-1}$. For the action on
$\underline{E}G$, we know that $g,h$ fix vertices $\tilde v, \tilde w$ (respectively) in the $0$-skeleton $(\underline{E}G)^{(0)}$,
which project down to vertices $v,w \in (\underline{B}G)^{(0)}$ (respectively). 
Since
$g= khk^{-1}$, we also have that $g$ fixes the vertex $k\cdot \tilde w$. The $g$ fixed set $\underline{E}G^g$ is contractible, 
so we can find a path joining $\tilde v$ to $k\cdot \tilde w$ inside the subcomplex $\underline{E}G^g$. Within this 
subcomplex, we can push
any path into the $1$-skeleton, giving us a sequence of consecutive edges within the graph 
$\big(\underline{E}G^g\big) ^{(1)} \subseteq
(\underline{E}G)^{(1)}$ joining $\tilde v$ to $k\cdot \tilde w$. This projects down to a path in $(\underline{B}G)^{(1)}$ 
joining the vertex $v$ to the vertex $w$ (as $k\cdot \tilde w$ and $\tilde w$ lie in the same $G$-orbit, 
they have the same projection). Using property (ii),
the projected path gives a sequence of elements $g=g_0\sim g_1 \sim \ldots \sim g_k= h$, where each pair $g_i, g_{i+1}$ 
are in the groups associated to consecutive vertices in the path. 

So we now have that the two equivalence relations $\sim$ and $\approx$ coincide on the set $F(G)$, and in particular, have the
same number of equivalence classes. Of course, the number of $\sim$ equivalence classes is precisely the number $eq(G)$. On
the other hand, any element of finite order $g$ in $G$ must have non-trivial fixed set in $\underline{E}G$. Since the action
is cellular, this forces the existence of a fixed vertex $\bar v\in (\underline{E}G)^{(0)}$ (which might not be unique). The vertex 
$\bar v$ has an image vertex $v\in (\underline{B}G)^{(0)}$ under the quotient map, and hence $g \approx \tilde g$ for
some element $\tilde g$ in the set $F(G)$, corresponding to the subgroup $G_{\tilde v}$. This implies that the number of 
$\approx$ equivalence classes in $F(G)$ is equal to $cf(G)$, concluding the proof.
\end{proof}

\vskip 5pt

\noindent {\bf Remark:} The procedure we described in this section works for any model for $\underline{B}G$, and
would compute the $\beta_0$ of the corresponding chain complex. 
On the other hand, if one has a model for $\underline{E}G$ with the property that the quotient $\underline{B}G$ has 
few vertices and edges,
then it is fairly straightforward to calculate the number $eq(G)$ from the $1$-skeleton of $\underline{B}G$. For the groups we are
considering, we can use the model space $X$. The $1$-skeleton of $\underline{B}G$ is then a quotient of the $1$-skeleton of the 
polyhedron $P$. Along with Lemma \ref{rank-K0},
this allows us to easily compute the rank of $H_0(\mathcal C)\otimes \mathbb Q$ for the groups within our class.


\subsection{Topology of $X/G$ and the group $H_3(\mathcal C)$}\label{h3:section}

Our next step is to understand the rank of the group $H_3(\mathcal C)\otimes \mathbb Q$; this requires 
an understanding of the differentials appearing in the chain complex $\mathcal C$. In $X$, if
we have a $k$-cell $\sigma$ contained in the closure of a $(k+1)$-cell $\tau$, then we have a natural inclusion
of stabilizers $G_\tau \hookrightarrow G_\sigma$ (well defined up to conjugation in $G_\sigma$). 
Applying the functor $K_q\big(C^*_r( - )\big)$,
where $q$ is even, we get an induced morphism from the complex representation ring of $G_\tau$
to the complex representation ring of $G_\sigma$. Concretely, the image of a complex representation
$\rho$ of $G_\tau$ under this morphism is the induced complex representation $\rho \uparrow := 
Ind_{G_\tau}^{G_\sigma}\rho$ in $G_\sigma$, with multiplicity given (as usual) by the degree of the
attaching map from the boundary sphere $S^{k-1} = \partial \tau$ to the sphere $S^{k-1}=\sigma / \partial \sigma$.
Note that conjugate representations induce up to the
same representation.

In the chain complex, the individual terms are indexed
by {\it orbits} of cells in $X$, rather than individual cells. To see what the chain map does, pick an orbit of 
$(k+1)$-cells, and fix an oriented representative $\tau$. Then for each orbit of a $k$-cell, one can look at the 
$k$-cells in that oriented orbit that are incident to $\tau$, call them $\sigma_1, \ldots , \sigma_r$. The stabilizer
of each of the $\sigma_i$ is a copy of the same group $G_\sigma$ (where the identification between these 
groups is well-defined up to inner automorphisms). For each of these $\sigma_i$, the
discussion in the previous paragraph allows us to obtain a map on complex representation rings. 
Finally, one identifies the groups $G_{\sigma_i}$ with the 
group $G_{\sigma}$, and take the sum of the maps on the complex representation rings. This
completes the description of the chain maps in the complex $\mathcal C$.

\vskip 5pt

Consider a representative $\sigma$ for the single $3$-cell orbit  in the $G$-CW-complex $X$
(we can identify $\sigma$ with the interior of the polyhedron $P$). 
The stabilizer of $\sigma$ must be trivial (as any element stabilizing $\sigma$ must stabilize all of $X$). 
We conclude that 
$\mathcal C_3 = \bigoplus _{\sigma\in (X/G)^{(3)}} K_q\big(C^*_r(G_{\sigma})\big) \cong \mathbb Z$, and the
generator for this group is given by the trivial representation of the trivial group. But inducing up the trivial
representation of the trivial group always gives the left regular representation, which is just the sum of all
irreducible representations. This tells us that, for each $2$-cell in the boundary of $\sigma$, the corresponding
map on the $K$-group is non-trivial. 

Now when looking at the chain complex, the target of the differential is indexed by {\it orbits} of $2$-cells, rather
than individual $2$-cells. Each $2$-cell orbit has either one or two representatives lying in the boundary of $\sigma$.
Whether there is one or two can be decided as follows: look at the $G$-translate $\sigma ^\prime$ of $\sigma$ which is
adjacent to $\sigma$ across the given boundary $2$-cell $\tau$. Since $X$ is a {\it manifold} model for $\underline{E}G$,
there is a unique such $\sigma^\prime$. As the stabilizer of the $3$-cell is trivial, there is a unique element $g\in G$
which takes $\sigma$ to $\sigma^\prime$. Let $\tau^\prime$ denote the pre-image $g^{-1}(\tau)$, a $2$-cell in the 
boundary of $\sigma$. Clearly $g$ identifies together the cells $\tau, \tau^\prime$ in the quotient space $X/G$.

If $\tau=\tau^\prime$, then the cell $\tau$ descends to a boundary cell in quotient space $X/G$, and the
stabilizer of $\tau$ is isomorphic to $\mathbb Z_2$ (with non-trivial element given by $g$). On the other hand, if $\tau\neq
\tau^\prime$, then $\tau$ descends to an interior cell in the quotient space $X/G$, with trivial stabilizer. 

Now if the $3$-cell $\sigma$ has a boundary $2$-cell $\tau$ whose stabilizer is $\mathbb Z_2$, then the orbit of $\tau$
intersects the boundary of $\sigma$ in precisely $\tau$. Looking in the coordinate corresponding to the orbit of $\tau$,
we see that in this case the map 
$\mathbb Z \longrightarrow \mathcal  \bigoplus _{f\in (X/G)^{(2)}} K_q\big(C^*_r(G_{f})\big)$ in
the chain complex is an {\it injection}, and hence that $E^2_{3,q}= H_3(\mathcal C) = 0$ for all even $q$. 

The other possibility is that {\it all} boundary $2$-cells are pairwise identified, in which case the quotient space 
$X/G$ is (topologically) a closed manifold. With respect to the induced orientation on the boundary of $\sigma$,
if any boundary $2$-cell $\tau$ is identified by an orientation {\it preserving} pairing to $\tau^\prime$, then the quotient space 
$X/G$ is a non-orientable manifold. Focusing on the coordinate corresponding to the orbit of $\tau$, we 
again see that the map $\mathbb Z \longrightarrow \mathcal  \bigoplus _{f\in (X/G)^{(2)}} K_q\big(C^*_r(G_{f})\big)$ in
the chain complex is injective (the generator of $\mathbb Z$ maps to $\pm 2$ in the $\tau$-coordinate). So in this case
we again conclude that $E^2_{3,q}= H_3(\mathcal C) = 0$ for all even $q$. 

Finally, we have the case where all pairs of boundary $2$-cells are identified together using orientation reversing pairings.
Then the quotient space $X/G$ is (topologically) a closed {\it orientable} manifold. In this case, the corresponding map  
$\mathbb Z \longrightarrow \mathcal  \bigoplus _{f\in X^{(2)}} K_q\big(C^*_r(G_{f})\big)$ in
the chain complex is just the zero map (the generator of $\mathbb Z$ maps to $0$ in each $\tau$-coordinate, due to the
two occurrences with opposite orientations). We summarize our 
discussion in the following:

\begin{lemma}\label{third-homology}
For our groups $G$, the third homology group $H_3(\mathcal C)$ is either (i)
isomorphic to $\mathbb Z$, if the quotient space $X/G$ is topologically a closed orientable manifold, or
(ii) trivial in all remaining cases. 
\end{lemma}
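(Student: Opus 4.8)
The plan is to analyze the single differential $d^3_{3,0}$ leaving $\mathcal C_3 \cong \mathbb Z$, since $H_3(\mathcal C) = \ker(\partial_3)$ where $\partial_3 \colon \mathcal C_3 \to \mathcal C_2$ is the boundary map in the chain complex $\mathcal C$. Because $\mathcal C_3 \cong \mathbb Z$ is generated by the trivial representation of the trivial stabilizer $G_\sigma$, computing $H_3(\mathcal C)$ reduces entirely to determining the image of this generator under $\partial_3$. As the discussion preceding the statement establishes, inducing up the trivial representation of the trivial group yields the regular representation of each adjacent $2$-cell stabilizer, so each boundary $2$-cell contributes nontrivially before orbit identifications are taken into account.

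First I would set up the bookkeeping for how the boundary $2$-cells of $\sigma$ are organized into $G$-orbits. For a fixed boundary $2$-cell $\tau$, I would use the manifold hypothesis on $X$: there is a unique $3$-cell $\sigma'$ adjacent across $\tau$, a unique $g \in G$ carrying $\sigma$ to $\sigma'$ (since $G_\sigma$ is trivial), and the pullback $\tau' = g^{-1}(\tau)$ is another boundary $2$-cell of $\sigma$ lying in the same orbit as $\tau$. This gives the trichotomy: either some orbit meets $\partial \sigma$ in a single cell $\tau = \tau'$ (stabilizer $\mathbb Z_2$), or every orbit meets $\partial \sigma$ in exactly two cells with some orientation-preserving pairing, or every orbit meets $\partial \sigma$ in two cells all paired by orientation-reversing maps.

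Next I would compute the $\tau$-coordinate of $\partial_3(\text{generator})$ in each case, using that the multiplicity is governed by the degree of the relevant attaching map together with the sign coming from orientations. In the first case (an orbit hitting $\partial\sigma$ exactly once, with $\mathbb Z_2$ stabilizer), that coordinate is $\pm 1$, so $\partial_3$ is injective and $H_3(\mathcal C) = 0$. In the orientation-preserving paired case, the two occurrences of $\tau$ and $\tau'$ contribute with the \emph{same} sign, giving $\pm 2$ in that coordinate, so again $\partial_3$ is injective and $H_3(\mathcal C) = 0$; topologically this is exactly when $X/G$ is a closed non-orientable manifold. In the remaining orientation-reversing case—precisely when $X/G$ is a closed orientable manifold—the two occurrences cancel, every coordinate of $\partial_3(\text{generator})$ vanishes, $\partial_3 = 0$, and hence $H_3(\mathcal C) = \ker(\partial_3) = \mathbb Z$.

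The main obstacle is verifying the sign computation in the paired cases, i.e.\ confirming that an orientation-\emph{preserving} gluing produces coincident signs ($\pm 2$) while an orientation-\emph{reversing} gluing produces opposite signs (cancellation). This rests on correctly tracking how the chosen orientation of the representative $3$-cell $\sigma$ induces orientations on its boundary $2$-cells, and how the identifying element $g$ acts on those induced orientations when it glues $\tau'$ to $\tau$; the standard simplicial/cellular boundary formula for a manifold then yields the claimed signs. Once the sign conventions are pinned down, the three cases are mutually exclusive and exhaustive by the manifold hypothesis, and matching each to the stated topological type of $X/G$ completes the proof.
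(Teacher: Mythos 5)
Your proposal is correct and follows essentially the same route as the paper: the paper's argument in Section \ref{h3:section} likewise reduces everything to the image of the generator of $\mathcal C_3\cong\mathbb Z$ (the trivial representation inducing up to the regular representation), runs the same trichotomy on how $2$-cell orbits meet $\partial\sigma$, and draws the same conclusions in each case. The only minor imprecision is your claim that the relevant coordinate is ``$\pm 1$'' in the $\mathbb Z_2$-stabilizer case --- the summand there is $K_0\big(C^*_r(\mathbb Z_2)\big)\cong\mathbb Z^2$ and the image is $\pm\langle 1,1\rangle$ --- but this does not affect the injectivity conclusion.
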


\vskip 5pt

\noindent {\bf Remark:} In \cite[Lemma 3.21]{MV}, it is shown that the comparison map from 
$H_i(\mathcal C)$ to the ordinary homology of the quotient space $H_i(\underline{B}G; \mathbb Z)$ is an isomorphism 
in all degrees $i> \dim (\underline{E}G^{\text{sing}}) +1$, and injective in degree $i= \dim (\underline{E}G^{\text{sing}}) +1$. 
Note that most of our Lemma \ref{third-homology} can also be deduced from this result. Indeed, our discussion shows that, 
in case (i), the singular set is $1$-dimensional (i.e.~all cells of dimension $\geq 2$ have trivial stabilizer), and hence 
$H_3(\mathcal C) \cong H_3(X/G) \cong \mathbb Z$. If $X/G$ is non-orientable, then \cite[Lemma 3.21]{MV} gives that 
$H_3(\mathcal C)$ injects into $H_3(X/G) \cong \mathbb Z_2$, so our Lemma provides a bit more information. In the 
case where $X/G$ has boundary, \cite[Lemma 3.21]{MV} implies that $H_3(\mathcal C)$ injects into $H_3(X/G) \cong 0$, 
so again recovers our result. We chose to retain our original proof of Lemma \ref{third-homology}, as a very similar argument
will be subsequently used to calculate $H_2(\mathcal C)$ (which does not follow from \cite[Lemma 3.21]{MV}).

\vskip 10pt


\subsection{$2$-skeleton of $X/G$ and the rank of $H_2(\mathcal C)$}\label{h2:section}

Now we turn our attention to the group $H_2(\mathcal C)$. In order to describe this homology group, we will
continue the analysis initiated in the previous section. Recall that we have an 
explicit (combinatorial) polyhedron $P$ which serves as a fundamental 
domain for the $G$-action. We can view the quotient space $X/G$ as obtained from the polyhedron
$P$ by identifying together certain faces of $P$.
The CW-structure on $X/G$ is induced from the natural (combinatorial) CW-structure on the polyhedron
$P$. The quotient space $X/G$ inherits the structure of a $3$-dimensional orbifold.
Note that, if we forget the orbifold structure and just think about the underlying topological space, then
 $X/G$ is a compact manifold, with possibly non-empty boundary. 
 
There is a close relationship between the {\it isotropy} of the cells 
in $X/G$, thought of as a 3-orbifold,
and the {\it topology} of $X/G$, viewed as a topological manifold. Indeed, as was discussed in the 
previous Section \ref{h3:section}, the stabilizer of any face $\sigma$ of the polyhedron $P$
is either (i) trivial, or (ii) is isomorphic to $\mathbb Z_2$. In the first case, there is an element in $G$ which
identifies the face $\sigma$ with some other face of $P$. So at the level of the quotient space $X/G$,
$\sigma$ maps to a $2$-cell which lies in the {\it interior} of the closed manifold $X/G$. In the second 
case, there are no other faces of the polyhedron $P$ that lie in the $G$-orbit of $\sigma$, and hence $\sigma$ 
maps to a boundary $2$-cell of $X/G$. We summarize this analysis in the following

\vskip 5pt

\noindent {\bf Fact 2:} For any $2$-cell $\sigma$ in $X/G$, we have that:
\begin{enumerate}[i)]
\item $\sigma$ lies in the boundary of $X/G$ if and only if $\sigma$ has isotropy $\mathbb Z_2$, and
\item $\sigma$ lies in the interior of $X/G$ if and only if $\sigma$ has trivial isotropy.
\end{enumerate}

\vskip 5pt

A similar analysis applies to $1$-cells. Indeed, 
the stabilizer of any edge in the polyhedron $P$ must either be (i) a finite cyclic group, or (ii) a finite dihedral
group. But case (ii) can only occur if there is some orientation reversing isometry through one of the faces containing
the edge. This would force the edge to lie in the boundary of the corresponding face, with the
stabilizer of the face being $\mathbb Z_2$. In view of {\bf Fact 2}, such an edge would have to lie in the
boundary of $X/G$. Conversely, if one has an edge in the boundary of $X/G$, then it has two adjacent faces
(which might actually coincide) in the boundary of $X/G$, each with stabilizer $\mathbb Z_2$, given by a 
reflection in the face. In most cases, these two reflections will determine a dihedral stabilizer for $e$; the
exception occurs if the two incident faces have stabilizers which coincide in $G$. In that case, the stabilizer
of $e$ will also be a $\mathbb Z_2$, and will coincide with the stabilizers of the two incident faces. We
summarize this discussion as our:

\vskip 5pt

\noindent {\bf Fact 3:} For any $1$-cell $e$ in $X/G$, we have that:
\begin{enumerate}[i)]
\item $e$ lies in the interior of $X/G$ if and only if $e$ has isotropy a cyclic group, acting by rotations around
the edge,
\item if $e$ has isotropy a dihedral group, then $e$ lies in the boundary of $X/G$,
\item the remaining edges in the boundary of $X/G$ have stabilizer $\mathbb Z_2$, which coincides
with the $\mathbb Z_2$ stabilizer of the incident boundary faces.
\end{enumerate}

\vskip 5pt

With these observations in hand, we are now ready to calculate $H_2(\mathcal C)\otimes \mathbb Q$. 
In order to understand this group, we need to understand the kernel of the morphism:
$$\Phi: \bigoplus _{\sigma\in (X/G)^{(2)}} K_0\big(C^*_r(G_{\sigma})\big)
\rightarrow \bigoplus _{e\in (X/G)^{(1)}} K_0\big(C^*_r(G_{e})\big).$$
Indeed, the group $H_2(\mathcal C)$ is isomorphic to the quotient of $\ker (\Phi)$ by a homomorphic 
image of $K_0\big(C^*_r(G_{\tau})\big) \cong \mathbb Z$, where $\tau$ is a representative for the unique
$3$-cell orbit. As such, we see that the rank of $H_2(\mathcal C)\otimes \mathbb Q$ either coincides with
the rank of $\ker(\Phi)$, or is one less than the rank of $\ker (\Phi)$.

Our approach to analyzing $\ker(\Phi)$ is to split up this group into smaller pieces, which are more
amenable to a geometric analysis. Let us introduce the notation $\Phi _e$, where $e$ is an edge, 
for the composition of the map $\Phi$ with the projection onto the summand $K_0\big(C^*_r(G_{e})\big)$.
The next Lemma analyzes the behavior of the map $\Phi$ in the vicinity of a boundary edge with
stabilizer a dihedral group.

\begin{lemma}\label{bdry-dihedral-edge}
Let $e$ be a boundary edge, with stabilizer a dihedral group $D_n$. Then we have:
\begin{enumerate}[(i)]
\item if $\sigma$ is an incident interior face, then 
$$\Phi_e\Big( K_0\big(C^*_r(G_{\sigma})\big)\Big)\subseteq
\mathbb Z \cdot \langle 1, 1, \ldots , 1, 1\rangle \leq K_0\big(C^*_r(D_n)\big),$$

\item if $\sigma_1, \sigma_2$ are the incident boundary faces, then
$$\Phi_e\Big( K_0\big(C^*_r(G_{\sigma_1})\big) \oplus K_0\big(C^*_r(G_{\sigma_2})\big)\Big) 
\cap \mathbb Z \cdot \langle 1, 1, \ldots , 1, 1\rangle = \langle 0 , \ldots ,0\rangle.$$
\end{enumerate}
\end{lemma}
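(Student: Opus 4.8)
The plan is to reduce both parts to computations in the representation ring $R(D_n)=K_0(C^*_r(D_n))$, written in its basis of irreducible characters; in this basis $\langle 1,1,\ldots,1,1\rangle$ is the sum of all irreducibles, which by Section~\ref{h3:section} is the class $\Ind_{\{1\}}^{D_n}$ of the trivial representation of the trivial group. Since the argument for (ii) invokes a two-dimensional irreducible of $D_n$, I will assume $n\ge 3$; for $n\le 2$ the group is abelian and should be handled separately.

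For part (i), I would first invoke {\bf Fact 2}: an incident interior face $\sigma$ has trivial stabilizer, so $K_0(C^*_r(G_\sigma))\cong\mathbb Z$ is generated by the trivial representation of the trivial group. By the description of the chain maps in Section~\ref{h3:section}, the composite of $\Phi$ on this summand with the projection to the $e$-coordinate sends the generator to an integer multiple (given by the relevant incidence numbers) of $\Ind_{\{1\}}^{D_n}$ of the trivial representation. As that induced class is the sum of all irreducibles, namely $\langle 1,1,\ldots,1,1\rangle$, the image of this summand lies in $\mathbb Z\cdot\langle 1,1,\ldots,1,1\rangle$, which is exactly (i).

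Part (ii) is where the work lies. The two incident boundary faces $\sigma_1,\sigma_2$ carry stabilizers $\langle s_1\rangle,\langle s_2\rangle\cong\mathbb Z_2$, where $s_1,s_2$ are the reflections generating $G_e=D_n$ (see {\bf Fact 3} and the surrounding discussion). Up to incidence multiplicities, $\Phi_e$ on each summand $K_0(C^*_r(\langle s_i\rangle))\cong\mathbb Z^2$ is $\Ind_{\langle s_i\rangle}^{D_n}$ applied to the two irreducible characters of $\mathbb Z_2$. I would compute these four induced characters by Frobenius reciprocity: the restriction of any two-dimensional irreducible of $D_n$ to a reflection subgroup is the direct sum of the trivial and sign characters, so every such induced character contains each two-dimensional irreducible with multiplicity exactly one; and a one-dimensional irreducible $\psi$ of $D_n$ occurs in $\Ind_{\langle s_i\rangle}^{D_n}(\text{trivial})$ or in $\Ind_{\langle s_i\rangle}^{D_n}(\text{sign})$ according as $\psi(s_i)=+1$ or $-1$. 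Each induced character is therefore multiplicity-free, meeting exactly one of the one-dimensional irreducibles when $n$ is odd (there being two such, trivial and sign) and exactly two when $n$ is even (there being four).

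To finish, I would exhibit a single linear functional on $R(D_n)$ that annihilates the entire boundary image yet is non-zero on $\langle 1,\ldots,1\rangle$; its existence forces the line $\mathbb Z\cdot\langle1,\ldots,1\rangle$ to meet the image only in $0$. Let $T$ record the multiplicity of one fixed two-dimensional irreducible and $S$ the total multiplicity of the one-dimensional irreducibles. The computation above shows that on every induced generator $S=T$ when $n$ is odd and $S=2T$ when $n$ is even, so $S-T$ (respectively $S-2T$) vanishes on all the generators; being linear, these functionals also vanish after rescaling by any incidence multiplicities, so the image cannot be enlarged by them. Evaluating on $\langle1,\ldots,1\rangle$ yields $2-1=1$ (respectively $4-2=2$), which is non-zero. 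Hence no non-zero multiple of $\langle1,\ldots,1\rangle$ lies in the boundary image, establishing (ii). I expect the main obstacle to be organizing the induced-character computation and the count of one-dimensional constituents cleanly across the parity cases: when $n$ is even there are two conjugacy classes of reflections, so $\sigma_1$ and $\sigma_2$ contribute genuinely different induced characters and the boundary image is spanned by four vectors rather than two, and one must confirm that the separating functional is insensitive to the a priori unknown incidence multiplicities.
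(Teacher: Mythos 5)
Your proof of part (i) is the paper's argument verbatim: the trivial representation of the trivial stabilizer induces up to the regular representation of $D_n$, which is $\langle 1,\ldots,1\rangle$. For part (ii) you are also on the paper's route --- the paper relegates exactly this Frobenius-reciprocity computation of the four induced characters to its Appendix A --- the only real difference being presentational: you extract the conclusion from a separating linear functional ($S-T$, resp.\ $S-2T$), whereas the paper writes the image in coordinates as $(a-c,\, b-d,\, \widehat{b-c},\, \widehat{a-d},\, S,\ldots,S)$ with $S=a+b-c-d$ and compares the first coordinate with $S$. Both versions hinge on the presence of at least one two-dimensional irreducible of $D_n$, and both are insensitive to the incidence multiplicities, as you note.

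One substantive warning about the case $n=2$ that you set aside: it is not routine, and in fact part (ii) fails there. For $D_2$ the representation ring is $\mathbb Z^4$ with all irreducibles one-dimensional, and $\Ind_{\langle s_1\rangle}^{D_2}(\rho_1)+\Ind_{\langle s_1\rangle}^{D_2}(\rho_2)$ is already the full regular representation $\langle 1,1,1,1\rangle$; so the image of even a single boundary-face summand meets the diagonal nontrivially, and no choice of orientation signs or incidence multiplicities repairs this, since $\rho_1$ and $\rho_2$ carry independent coefficients. The paper's own Appendix A has the same blind spot: its concluding argument pivots on the coordinate $S$ attached to a two-dimensional character $\phi_p$, and no such character exists when $n=2$. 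So your instinct to quarantine $n\le 2$ was sound, but the quarantined case cannot be ``handled separately'' in the affirmative --- the lemma as used really requires $n\ge 3$, or else one must check in the applications (Lemma~\ref{split:lemma}) that edges with stabilizer $D_2$ never have incident interior faces.
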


Note that Lemma \ref{bdry-dihedral-edge} tells us that, from the viewpoint of finding elements
in $\ker (\Phi)$, boundary faces and interior faces that come together along an
edge with dihedral stabilizer have {\it no interactions.} 

\begin{proof}
There are precisely {\it two} boundary faces 
which are incident to $e$, and some indeterminate number of interior faces which are incident to $e$.
From {\bf Fact 2}, the boundary faces each have corresponding $G_\sigma \cong \mathbb Z_2$,
while the interior faces each have $G_\sigma \cong 1$. For the boundary faces, we have 
$$K_0\big(C^*_r(G_{\sigma})\big) = K_0\big(C^*_r(\mathbb Z_2)\big) \cong \mathbb Z\oplus \mathbb Z$$
with generators given by the trivial representation and the sign representation of the group $\mathbb Z_2$.
The interior faces have $K_0\big(C^*_r(G_{\sigma})\big) \cong \mathbb Z$, generated by the trivial 
representation of the trivial group. 

For each incidence of $\sigma$ on $e$, the effect of $\Phi_e$
on the generator is obtained by inducing up representations. But the trivial representation of the trivial group
always induces up to the left regular representation on the ambient group. The latter is the sum of all irreducible
representations, hence corresponds to the element $\langle 1, \ldots , 1 \rangle \leq K_0\big(C^*_r(\mathbb Z_n)\big)$.
This tells us that, for each internal face, the image of $\Phi_e$ lies in the subgroup 
$\mathbb Z \cdot \langle 1, 1, \ldots , 1, 1\rangle$, establishing (i).

On the other hand, an easy calculation (see Appendix A) shows that, if $\sigma_1, \sigma_2 \in X_1$ 
are the two boundary faces incident to $e$, then in the $e$-coordinate we have 
$$\Phi_e\Big( K_0\big(C^*_r(G_{\sigma_1})\big) \oplus K_0\big(C^*_r(G_{\sigma_2})\big)\Big) 
\cap \mathbb Z \cdot \langle 1, 1, \ldots , 1, 1\rangle = \langle 0 , \ldots ,0\rangle,$$
which is the statement of (ii).
\end{proof}

To analyze $\ker(\Phi)$, we need to introduce some auxiliary spaces. Recall that $X/G$
is topologically a closed $3$-manifold, possibly with boundary. 
We introduce the following terminology for boundary components:
\begin{itemize}
\item a boundary component is {\it dihedral} if it has no edges with stabilizer $\mathbb Z_2$
(i.e.~all its edges have stabilizers which are dihedral groups),
\item a boundary component is {\it non-dihedral} if it is not dihedral 
(i.e.~it contains at least one edge with stabilizer $\mathbb Z_2$),
\item a boundary component is {\it even} if it contains an edge $e$ with stabilizer
of the form $D_{2k}$ (i.e.~an edge whose stabilizer has order a multiple of $4$), and
\item a boundary component is {\it odd} if it is not even (i.e.~all its edges have stabilizers of the form 
$D_{2k+1}$).
\end{itemize}
Let $s$ denote the number of orientable even dihedral boundary components, and let $t$ denote
the number of orientable odd dihedral boundary components. 
Note that it is straightforward to calculate the integers $s,t$ from the
polyhedral fundamental domain $P$ for the $G$-action on $X$.

Next, form the $2$-complex $Y$ by taking the union of the closure of all interior faces of $X/G$, along with 
all the non-dihedral boundary components. We denote by $\partial Y \subset Y$ the subcomplex consisting
of all non-dihedral boundary components. By construction, $\partial Y$ consists precisely of the subcomplex
generated by the $2$-cells in $Y \cap \partial (X/G)$, so the choice of notation should cause no confusion. 
Let $Z$ denote the union of all dihedral boundary components of $X/G$. 

By construction, every $2$-cell in $X/G$ appears either in $Y$ or in $Z$, but not in both.
This gives rise to a decomposition of the 
indexing set $(X/G)^{(2)}= Y^{(2)}\coprod Z^{(2)}$, which in turn yields a splitting:
$$\bigoplus _{\sigma\in (X/G)^{(2)}} K_0\big(C^*_r(G_{\sigma})\big) =
\Big[\bigoplus _{\sigma\in Y^{(2)}} K_0\big(C^*_r(G_{\sigma})\big)\Big] \oplus 
\Big[\bigoplus _{\sigma\in Z^{(2)}} K_0\big(C^*_r(G_{\sigma})\big)\Big].$$
Let us denote by $\Phi_Y, \Phi_Z$ the restriction of $\Phi$ to the first and second
summand described above. We then have the following:

\begin{lemma}\label{split:lemma}
There is a splitting $\ker(\Phi) = \ker(\Phi_Y) \oplus \ker(\Phi_Z)$.
\end{lemma}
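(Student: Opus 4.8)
The plan is to reduce the statement to the single claim that $\im(\Phi_Y)\cap\im(\Phi_Z)=0$ inside the target $\bigoplus_{e\in(X/G)^{(1)}}K_0\big(C^*_r(G_e)\big)$. One inclusion is free: the decomposition $(X/G)^{(2)}=Y^{(2)}\coprod Z^{(2)}$ presents the source as an internal direct sum, and since $\Phi(y+z)=\Phi_Y(y)+\Phi_Z(z)$ we immediately get $\ker(\Phi_Y)\oplus\ker(\Phi_Z)\subseteq\ker(\Phi)$. Conversely, given $(y,z)\in\ker(\Phi)$ we have $\Phi_Y(y)=-\Phi_Z(z)\in\im(\Phi_Y)\cap\im(\Phi_Z)$, so once that intersection is shown to be trivial we conclude $\Phi_Y(y)=\Phi_Z(z)=0$, i.e.\ $(y,z)\in\ker(\Phi_Y)\oplus\ker(\Phi_Z)$, as desired.

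To verify $\im(\Phi_Y)\cap\im(\Phi_Z)=0$ I would work one edge-coordinate at a time. The first step is to pin down the support of $\Phi_Z$: each face indexed by $Z^{(2)}$ is a boundary face of a dihedral boundary component, and every edge of such a face lies in that same component, so by the definition of a dihedral component (no $\mathbb Z_2$-edges) together with \textbf{Fact 3} it has dihedral stabilizer. Hence $\Phi_Z$ can contribute to the $e$-coordinate only when $e$ is such a dihedral edge of a dihedral component, while at every other edge $\Phi_Z(z)_e=0$ automatically.

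The crux is then a fixed dihedral edge $e$ of a dihedral component, with stabilizer $D_n$. Its incident faces are precisely the two boundary faces $\sigma_1,\sigma_2$ of its boundary component (these are its $Z$-faces, possibly coinciding) together with some interior faces, and these interior faces are the only $Y$-faces meeting $e$: no non-dihedral boundary face can be incident to $e$, since the two boundary faces along any boundary edge lie in the same (here dihedral) boundary component as $e$. Now Lemma \ref{bdry-dihedral-edge}(i) forces $\Phi_Y(y)_e\in\mathbb Z\cdot\langle 1,\ldots,1\rangle$, while Lemma \ref{bdry-dihedral-edge}(ii) places $\Phi_Z(z)_e$ in a subgroup meeting $\mathbb Z\cdot\langle 1,\ldots,1\rangle$ only in $0$; since $\Phi_Y(y)_e=-\Phi_Z(z)_e$, this common value must vanish. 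Combined with the support computation, the element $\Phi_Y(y)=-\Phi_Z(z)$ is zero in every edge-coordinate, hence is $0$, which establishes the trivial intersection and therefore the splitting.

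I expect the only real obstacle to be the combinatorial bookkeeping of the previous paragraph: correctly describing the faces incident to a dihedral edge of a dihedral component and, in particular, ruling out non-dihedral boundary faces there, so that Lemma \ref{bdry-dihedral-edge} applies verbatim with the interior faces in the role of part (i) and the two dihedral-component boundary faces in the role of part (ii). Once the incidence structure is correctly identified, the rest is the purely formal observation that two subgroups with trivial intersection cannot share a nonzero image.
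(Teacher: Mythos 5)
Your proposal is correct and follows essentially the same route as the paper: both reduce to the observation that $\Phi_Z$ is supported only on dihedral edges of dihedral components, and at such an edge Lemma \ref{bdry-dihedral-edge} forces the $Y$-contribution into $\mathbb Z\cdot\langle 1,\ldots,1\rangle$ and the $Z$-contribution into a complementary subgroup, so the two can only agree at $0$. Your packaging of this as $\im(\Phi_Y)\cap\im(\Phi_Z)=0$ is only a cosmetic reformulation of the paper's edge-by-edge verification that $\Phi_e(v_Z)=0$ for all $e$.
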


\begin{proof}
We clearly have the inclusion $\ker(\Phi_1) \oplus \ker(\Phi_2) \subseteq \ker(\Phi)$, so let us focus on the
opposite containment. If we have some arbitrary element $v\in \ker(\Phi)$, we can decompose $v=v_Y + v_Z$, where 
we have $v_Y \in \bigoplus _{\sigma\in Y^{(2)}} K_0\big(C^*_r(G_{\sigma})\big)$, and
$v_Z \in \bigoplus _{\sigma\in Z^{(2)}} K_0\big(C^*_r(G_{\sigma})\big)$. Let us first argue that $v_Z\in \ker (\Phi_Z)$,
i.e.~that $\Phi(v_Z)=0$. This is of course equivalent to showing that for every edge $e$, we have $\Phi_e(v_Z)=0$.

Since $v_Z$ is supported on $2$-cells lying in $Z$, 
it is clear that for any edge $e\not \subset Z$, we have $\Phi_e(v_Z)=0$. For edges $e\subset Z$, we have:
$$0= \Phi_e(v) = \Phi_e(v_Y+v_Z) = \Phi_e(v_Y) + \Phi_e(v_Z).$$
This tells us that $\Phi_e(v_Z) = \Phi_e( - v_Y)$ lies in the intersection
\begin{equation}\label{intersect}
 \Phi_e\Big( \bigoplus _{\sigma\in Y^{(2)}} K_0\big(C^*_r(G_{\sigma})\big)\Big) \cap 
 \Phi_e\Big( \bigoplus _{\sigma\in Z^{(2)}} K_0\big(C^*_r(G_{\sigma})\big)\Big).
\end{equation}
But $Y^{(2)}$ contains all the {\it interior} faces incident to $e$, while $Z^{(2)}$ contains all 
{\it boundary} faces incident to $e$. Since $e \subset Z$, the union of all dihedral boundary components of $X/G$, 
we have that the stabilizer $G_e$ must be dihedral. Applying
Lemma \ref{bdry-dihedral-edge}, we see that the intersection in equation (\ref{intersect}) consists of just the zero vector, 
and hence $\Phi_e(v_Z) =0$. 

Since we have shown that $\Phi_e(v_Z) =0$ holds for all edges $e$, we obtain that 
$v_Z\in \ker (\Phi_Z)$, as desired. Finally, we have that 
$$\Phi(v_Y) = \Phi( v - v_Z) = \Phi(v) - \Phi(v_Z) = 0$$
as both $v, v_Z$ are in the kernel of $\Phi$. We conclude that $v_Y \in \ker (\Phi_Y)$, concluding the proof of the Lemma. 
\end{proof}

We now proceed to analyze each of $\ker(\Phi_Y), \ker(\Phi_Z)$ separately. We start with:

\begin{lemma}\label{Z-term:lemma}
The group $\ker(\Phi_Z)$ is free abelian, of rank equal to $s+2t$.
\end{lemma}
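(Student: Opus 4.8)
The plan is to compute $\ker(\Phi_Z)$ one connected component at a time. Since $Z$ is a disjoint union of closed surfaces (its dihedral boundary components) and $\Phi_Z$ pairs a $Z$-face only with edges lying in the same component, the map $\Phi_Z$ splits as a direct sum over the components of $Z$, and hence so does its kernel. It therefore suffices to show that a single connected dihedral boundary surface $S$ contributes rank $2$ when it is orientable and odd, rank $1$ when it is orientable and even, and rank $0$ when it is non-orientable; summing over components then produces $s+2t$. Freeness is automatic, since $\ker(\Phi_Z)$ is a subgroup of the free abelian group $\bigoplus_{\sigma\in Z^{(2)}} K_0\big(C^*_r(\mathbb Z_2)\big)$.

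Fix such an $S$. Every face $\sigma$ of $S$ has $G_\sigma\cong\mathbb Z_2$, so $K_0\big(C^*_r(G_\sigma)\big)=\mathbb Z\langle\mathrm{triv}\rangle\oplus\mathbb Z\langle\mathrm{sign}\rangle$; write $(a_\sigma,b_\sigma)$ for the coordinates of a chain. Every edge $e$ of $S$ has $G_e\cong D_{n_e}$ and is incident to exactly two faces, whose $\mathbb Z_2$-stabilizers are generated by the two reflections that together generate $D_{n_e}$ (Facts 2 and 3). The first step is to record, for each edge, the conditions imposed by $\Phi_e=0$. This is the content of the explicit induction calculation of $\mathrm{Ind}_{\mathbb Z_2}^{D_{n_e}}(\mathrm{triv})$ and $\mathrm{Ind}_{\mathbb Z_2}^{D_{n_e}}(\mathrm{sign})$ underlying Lemma \ref{bdry-dihedral-edge} and Appendix A, decorated by the $\pm1$ incidence degrees coming from the attaching maps. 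I expect the outcome to be: at an \emph{odd} edge the coordinate of the trivial character $\chi_0$ of $D_{n_e}$ reads off $(\partial_2 a)_e$ and the coordinate of the sign character $\chi_1$ reads off $(\partial_2 b)_e$, with the two-dimensional coordinates giving only the consequence $(\partial_2(a+b))_e=0$; whereas at an \emph{even} edge, where the two incident reflections lie in the two distinct conjugacy classes of $D_{n_e}$, the four linear characters force in addition $a_\sigma=b_\sigma$ on both incident faces, while again contributing $(\partial_2 a)_e=(\partial_2 b)_e=0$. Here $\partial_2\colon C_2(S)\to C_1(S)$ is the ordinary cellular boundary map: the multiplicity of $\chi_0$ in an induced trivial representation is $1$, so the $\chi_0$-coordinate of $\Phi_Z$ is exactly the trivial-coefficient part of the equivariant differential, i.e.\ the cellular $\partial_2$.

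Granting this, the two sectors decouple. If $S$ is odd, the only constraints are $\partial_2 a=0$ and $\partial_2 b=0$, so $\ker(\Phi_Z|_S)\cong\ker(\partial_2)\oplus\ker(\partial_2)$. If $S$ is even, any face meeting an even edge satisfies $a_\sigma=b_\sigma$, and this relation propagates across every remaining edge: at an odd edge the conditions $\partial_2 a=0,\ \partial_2 b=0$ force $a_{\sigma'}=b_{\sigma'}$ on the opposite face once $a_\sigma=b_\sigma$, so by connectedness $a\equiv b$ on all of $S$; setting $c=a=b$ leaves the single constraint $\partial_2 c=0$ and hence $\ker(\Phi_Z|_S)\cong\ker(\partial_2)$. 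Since $S$ is a closed surface there are no $3$-cells, so $\ker(\partial_2)=H_2(S;\mathbb Z)$, which is $\mathbb Z$ when $S$ is orientable and $0$ when $S$ is non-orientable. This yields contribution $2$ for orientable odd, $1$ for orientable even, and $0$ in all non-orientable cases; summing over the components of $Z$ gives rank $s+2t$.

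The main obstacle is the bookkeeping in the second paragraph: verifying the precise images of the two induction maps, and in particular tracking, in the even case, that the two reflections bounding a $D_{2k}$-edge fall into the two different conjugacy classes of reflections (so their induced representations occupy complementary pairs among the four linear characters), which is exactly what produces the extra relation $a_\sigma=b_\sigma$ and collapses the even contribution from $2$ down to $1$. Once the incidence signs are correctly matched with the cellular boundary operator, the identification $\ker(\partial_2)=H_2(S;\mathbb Z)$ together with the orientability dichotomy completes the argument.
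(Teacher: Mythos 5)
Your proposal is correct and follows essentially the same route as the paper: split $\ker(\Phi_Z)$ over the connected components of $Z$, extract the edge relations (at an odd edge $a_1=a_2$, $b_1=b_2$; at an even edge $a_1=a_2=b_1=b_2$) from the Appendix A induction computation, and then count degrees of freedom per component. The only real difference is cosmetic and occurs at the last step: you package the surviving constraints as $\ker(\partial_2)=H_2(S;\mathbb Z)$ and invoke the orientability dichotomy for closed surfaces, whereas the paper argues by hand (an embedded M\"obius band forces all coefficients to vanish on non-orientable components, and coherent orientations give the $\mathbb Z^2$, resp.\ $\mathbb Z$, on orientable odd, resp.\ even, components); the two formulations are interchangeable.
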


Before establishing Lemma \ref{Z-term:lemma}, recall that $s,t$ counts the number of orientable dihedral boundary 
components of $X/G$ which are even and odd, respectively.  From the definition of $Z$, we see that the number of 
connected components of the space $Z$ is precisely $s+t$. 

\begin{proof}
It is obvious that $\ker(\Phi_Z)$ decomposes as a direct sum of the kernels of $\Phi$ restricted to the
individual connected components of $Z$, which are precisely the dihedral boundary components of $X/G$.
So we can argue one dihedral boundary component at a time. On a fixed 
dihedral boundary component, we have that each $2$-cell contributes a $\mathbb Z \oplus \mathbb Z$ to the source of the 
map $\Phi$, with canonical (ordered) basis given by the trivial representation and the sign representation on
$\mathbb Z_2$. Fix a boundary edge $e$, and let $\sigma_1, \sigma_2$ be the two boundary
faces incident to $e$. We assume that the two faces are equipped with compatible orientations,
and let $(a_i, b_i)$ be elements in the groups $K_0\big(C^*_r(G_{\sigma_i})\big) \cong \mathbb Z \oplus \mathbb Z$.
Now assume that $\Phi_e\big(( a_1, b_1 \ | \ a_2, b_2 ) \big) =0$. Then an easy computation (see Appendix A) shows that:

\vskip 5pt

\begin{enumerate}[a)]
\item if $e$ has stabilizer of the form $D_{2k+1}$, then we must have $a_1=a_2$ and $b_1=b_2$,
\item if $e$ has stabilizer of the form $D_{2k}$, then we must have $a_1=a_2=b_1=b_2$
\end{enumerate}

\vskip 5pt

\noindent (and since $Z$ consists of dihedral boundary components, there are no edges $e$ in $Z$ with stabilizer 
$\mathbb Z_2$).
Note that reversing the orientation on one of the faces just changes the sign of the corresponding entries.
We can now calculate the contribution of each boundary component to $\ker(\Phi_Z)$. 

\vskip 5pt

\noindent \underline{Non-orientable components:} Any such boundary component contains an embedded 
M\"obius band. Without loss of generality, we can assume that the sequence of faces $\sigma_1, \ldots , \sigma _r$
cyclically encountered by this M\"obius band are all distinct. At the cost of flipping the orientations on $\sigma_i$,
$2\leq i\leq r$, we can assume that consecutive pairs are coherently oriented. Since we have a M\"obius band,
this forces the orientations of $\sigma_1$ and $\sigma _r$ to be non-coherent along their common edge. 
So if we have an element lying in $\ker(\Phi _Z)$, the coefficients along the cyclic sequence of faces must satisfy
(regardless of the edge stabilizers):
$$a_1 = a_2 = \ldots = a_k = - a_1$$
$$b_1= b_2 = \ldots = b_k = -b_1$$
This forces $a_1=b_1=0$. Regardless of the orientations and edge stabilizers, equations (a) and (b) imply
that this propagates to force
all coefficients to equal zero. We conclude that any element in $\ker (\Phi _Z)$ must have all zero 
coefficients in the $2$-cells corresponding to any non-orientable boundary component.

\vskip 5pt

\noindent \underline{Orientable odd components:} Fix a coherent orientation of all the $2$-cells in the 
boundary component. Then in view of equation (a) above, elements lying in $\ker(\Phi)$ must have
all $a_i$-coordinates equal, and all $b_i$-coordinates equal (as one ranges over $2$-cells within this
fixed boundary component). This gives two degrees of freedom, and hence such a boundary component
contributes a $\mathbb Z^2$ to $\ker(\Phi_Z)$.

\vskip 5pt

\noindent \underline{Orientable even components:} Again, let us fix a coherent orientation of all the $2$-cells in the 
boundary component. As in the odd component case, any element in $\ker(\Phi_Z)$ must have
all $a_i$-coordinates equal, and all $b_i$-coordinates equal. However, the presence of a single edge
with stabilizer of the form $D_{2k}$ forces, for the two adjacent faces, to have corresponding 
$a$- and $b$-coordinates equal (see equation (b) above). This in turn propagates to yield that {\it all} 
the $a$- and $b$-coordinates
must be equal. As such, we have one degree of freedom for elements in the kernel, and hence such a 
boundary component contributes a single $\mathbb Z$ to $\ker (\Phi_Z)$. This concludes the proof of Lemma
\ref{Z-term:lemma}.
\end{proof}

Next we focus on the group $\ker(\Phi_Y)$. We would like to relate $\ker (\Phi _Y)$ with the second homology of 
the space $Y$. Let 
$\mathcal A$ denote the cellular chain complex for the CW-complex $Y$, and let 
$d_Y: \mathcal A_2 \rightarrow \mathcal A_1$ denote the differentials in the cellular 
chain complex. Since $Y$ is a $2$-dimensional CW-complex, 
we have that $H_2(Y) = \ker (d_Y)$. Our next step is to establish:

\begin{lemma}\label{Y-term:lemma}

There is a split surjection $\phi: \ker(\Phi_Y) \rightarrow \ker(d_Y)$, providing a direct sum decomposition
$\ker(\Phi_Y) \cong \ker(\phi) \oplus \ker(d_Y)$.
\end{lemma}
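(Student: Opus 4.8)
The plan is to exhibit $\phi$ as the restriction of a chain map, deduce the splitting formally, and then do the real work in proving surjectivity. First I would build the chain map. Let $\mu$ denote the homomorphism from $\bigoplus_{\sigma\in Y^{(2)}}K_0\big(C^*_r(G_\sigma)\big)$ to the cellular chain group $\mathcal A_2$ that records, in each coordinate, the multiplicity of the trivial representation of $G_\sigma$: on an interior face this is the canonical isomorphism $K_0\big(C^*_r(1)\big)\cong\mathbb Z$, while on a boundary face it is the projection $\mathbb Z\langle\mathrm{triv}\rangle\oplus\mathbb Z\langle\mathrm{sign}\rangle\to\mathbb Z\langle\mathrm{triv}\rangle$. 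Define $\mu'$ analogously on the edge summands. The identity $\mu'\circ\Phi_Y=d_Y\circ\mu$ holds because, by Frobenius reciprocity, the multiplicity of the trivial representation is preserved by each induction map $\Ind_{G_\sigma}^{G_e}$ occurring in $\Phi_e$, and because the attaching-degree multiplicities weighting those inductions are exactly the incidence numbers $[\sigma:e]$ defining $d_Y$. Thus $\mu$ is a chain map, and it restricts to a homomorphism $\phi\colon\ker(\Phi_Y)\to\ker(d_Y)=H_2(Y)$.

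The splitting is then automatic: $\ker(d_Y)$ is a subgroup of the free abelian group $\mathcal A_2$, hence is itself free abelian, and any surjection onto a free abelian group admits a section. So the decomposition $\ker(\Phi_Y)\cong\ker(\phi)\oplus\ker(d_Y)$ follows once $\phi$ is shown to be onto, and this surjectivity is the whole content of the Lemma.

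To prove surjectivity I would, starting from a cycle $c\in\ker(d_Y)$, construct a preimage $v$. On an interior face the value is forced, $v_\tau=c_\tau$; on a boundary face I would set $v_\sigma=c_\sigma\langle\mathrm{triv}\rangle+b_\sigma\langle\mathrm{sign}\rangle$, leaving the sign coefficients $b_\sigma$ free, so that $\mu(v)=c$ holds for any choice. It then remains to choose the $b_\sigma$ so that $\Phi_e(v)=0$ for every edge $e$. On an interior edge only interior faces are incident and $\Phi_e(v)$ is a multiple of the regular representation whose coefficient is $\sum_\tau[\tau:e]c_\tau$, which vanishes by the cycle condition. On a boundary edge $e$, Lemma \ref{bdry-dihedral-edge} decouples the incident interior faces (whose image lies on the regular-representation line) from the incident boundary faces (whose image meets that line only at $0$); consequently $\Phi_e(v)=0$ separates into a purely interior equation, which the cycle condition must be shown to force on its own, and a residual equation that is linear in the unknowns $b_\sigma$.

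The main obstacle is precisely this last step: verifying that the interior contribution is forced to vanish at every dihedral edge, and then solving the residual linear system for the $b_\sigma$ consistently around each non-dihedral boundary component. I expect to handle it by fixing a coherent orientation on each boundary component and propagating the sign constraints across its successive $\mathbb Z_2$- and $D_{2k}$-edges, exactly as in the orientability-and-parity bookkeeping already carried out for $\ker(\Phi_Z)$ in Lemma \ref{Z-term:lemma}; the defining feature of a non-dihedral component, namely the presence of at least one edge with stabilizer $\mathbb Z_2$, is what guarantees the system is solvable and simultaneously pins down $\ker(\phi)$. Once the $b_\sigma$ are produced, $v\in\ker(\Phi_Y)$ maps to $c$, establishing surjectivity and hence the claimed splitting.
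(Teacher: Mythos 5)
Your chain map $\mu$ is exactly the paper's $\hat\phi$ (record the multiplicity of the trivial representation; Frobenius reciprocity is a fine justification for the chain-map property), and your formal observation that a surjection onto the free abelian group $\ker(d_Y)$ automatically splits is valid. But the surjectivity of $\phi$ --- which you correctly identify as ``the whole content of the Lemma'' --- is not actually proved in your proposal: the construction of the coefficients $b_\sigma$ is deferred to an expected orientation-and-parity analysis, and the one step you do commit to is wrong. At a dihedral edge $e$ of a \emph{non-dihedral} boundary component, the faces of $Y$ incident to $e$ include both the interior faces and the two boundary faces, so the cycle condition $\sum_{\sigma\ni e}[\sigma:e]\,c_\sigma=0$ mixes all of them; it does \emph{not} force your ``purely interior equation'' $\sum_{\tau\ \mathrm{interior}}[\tau:e]\,c_\tau=0$ on its own. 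The decoupling you invoke from Lemma \ref{bdry-dihedral-edge} does not apply here either: part (ii) of that lemma concerns the all-ones vector $\langle 1,\ldots,1\rangle$, whereas the image of a boundary face under $\Phi_e$ \emph{does} meet the regular-representation line nontrivially (in the notation of Appendix A, $(a,b,c,d)=(1,1,0,0)$ maps precisely to the character of the regular representation of $D_n$). So the interior and boundary contributions at such an edge are meant to cancel \emph{against each other}, not to vanish separately, and your residual linear system would be set up against a false constraint.

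The idea you are missing is the paper's explicit section $\bar\phi$: send the cycle $c$ to the element whose $\sigma$-coordinate is $c_\sigma$ times the class of the \emph{regular} representation of $G_\sigma$ (i.e.\ $c_\sigma$ on interior faces and $c_\sigma\langle 1,1\rangle$ on boundary faces --- in your notation, simply take $b_\sigma=c_\sigma$ everywhere). Since induction carries regular representations to regular representations, at every edge $e$ one gets $\Phi_e(\bar\phi(c))=\bigl(\sum_{\sigma\ni e}[\sigma:e]\,c_\sigma\bigr)\cdot[\mathbb{C}G_e]=0$ by the cycle condition, with no case distinction between edge types, no orientation bookkeeping, and no linear system to solve; and $\hat\phi\circ\bar\phi=\mathrm{id}$ is immediate. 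This single choice proves surjectivity and exhibits the splitting in one stroke, which is the actual content of the Lemma.
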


\begin{proof}
Let $\mathcal D \subset \mathcal C$ denote the subcomplex
of our original chain complex determined by the subcollection of indices $Y^{(k)} \subset (X/G)^{(k)}$.
By construction, the map $\Phi _Y$ we are interested in is the boundary operator $\Phi_Y: \mathcal D_2\rightarrow
\mathcal D_1$ appearing in the chain complex $\mathcal D$. We define the map 
$$\hat \phi: \mathcal D_2=  \bigoplus _{\sigma\in Y^{(2)}} K_0\big(C^*_r(G_{\sigma})\big) \rightarrow 
\bigoplus _{\sigma\in Y^{(2)}} \mathbb Z = \mathcal A_2$$ 
as the direct sum of maps $\hat \phi_\sigma: K_0\big(C^*_r(G_{\sigma})\big) \rightarrow \mathbb Z$, where:
\begin{itemize}
\item if $G_\sigma$ is trivial, then $\hat \phi_\sigma: \mathbb Z \rightarrow \mathbb Z$ takes the generator for
$K_0\big(C^*_r(G_{\sigma})\big) = \mathbb Z$ given by the trivial representation to the element $1\in \mathbb Z$, and
\item if $G_\sigma = \mathbb Z_2$, then $\hat \phi_\sigma: \mathbb Z \oplus \mathbb Z \rightarrow \mathbb Z$ is given
by $\hat \phi_\sigma(\langle 1, 0 \rangle) = 1, \hat \phi_\sigma(\langle 0, 1\rangle) = 0$, where, as usual, $\langle 1, 0 \rangle, 
\langle 0, 1 \rangle$ correspond to the trivial representation and the sign representation respectively.
\end{itemize}
For any element $z \in \ker (\Phi_Y)$, a computation shows that $(d_Y\circ \hat \phi)(z) = 0$, and hence $\hat \phi$
restricts to a morphism $\phi: \ker (\Phi _Y) \rightarrow \ker (d_Y)$.

Next, we argue that the map $\phi: \ker (\Phi _Y) \rightarrow \ker (d_Y)$ is surjective. To see this, we construct 
a map $\bar \phi: \mathcal A_2 \rightarrow \mathcal D_2$ as a direct sum of maps $\bar \phi_\sigma: \mathbb Z\rightarrow
K_0\big(C^*_r(G_{\sigma})\big)$. In terms of our usual generating sets for the groups $K_0\big(C^*_r(G_{\sigma})\big)$,
the maps $\bar \phi_\sigma$ are given by:
\begin{itemize}
\item if $G_\sigma$ is trivial, then $\bar \phi_\sigma: \mathbb Z \rightarrow \mathbb Z$ is defined by $\bar \phi_\sigma(1)=1$, and
\item if $G_\sigma = \mathbb Z_2$, then $\bar \phi_\sigma: \mathbb Z \rightarrow \mathbb Z \oplus \mathbb Z$ is defined by 
$\bar \phi_\sigma(1)=\langle 1, 1 \rangle$.
\end{itemize}
We clearly have that $\hat \phi \circ \bar \phi: \mathcal A_2 \rightarrow \mathcal A_2$ is the identity, and an easy 
computation shows that if $z\in \ker (d_Y)$, then $\bar \phi (z) \in \ker (\Phi_Y)$. We conclude that the restriction 
$\phi: \ker (\Phi _Y) \rightarrow \ker (d_Y)$ is surjective, and that the restriction of $\bar \phi$ to 
$\ker(d_Y)$ provides a splitting of this surjection. 
Since the map $\phi$ is a split surjection, we see that $\ker (\Phi_Y) \cong \ker (d_Y) \oplus \ker (\phi)$, 
completing the proof of Lemma \ref{Y-term:lemma}.
\end{proof}

So the last
step is to identify $\ker (\phi)$. Recall that $Y$ is a $2$-complex which contains, as a subcomplex, the union of all boundary 
components of $X/G$ which have an edge with stabilizer $\mathbb Z_2$. We denoted this subcomplex by 
$\partial Y \subset Y$. We can again call a connected component in $\partial Y$ {\it even} if it contains some edge 
with stabilizer of the form $D_{2k}$, and {\it odd} otherwise. Let $t^\prime$ denote the
number of orientable, odd connected components in $\partial Y$. Then 
we have:

\begin{lemma}\label{phi-term:lemma}
The group $\ker (\phi)$ is free abelian, of rank $=t^\prime$.
\end{lemma}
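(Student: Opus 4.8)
The plan is to identify $\ker(\phi)$ with an explicit group of integer labelings of the $2$-cells of $\partial Y$, and then run the same component-by-component bookkeeping used in the proof of Lemma \ref{Z-term:lemma}. First I would unwind the definitions: since $\phi$ is the restriction of $\hat\phi$ to $\ker(\Phi_Y)$, we have $\ker(\phi) = \ker(\Phi_Y)\cap \ker(\hat\phi)$. The map $\hat\phi_\sigma$ is an isomorphism on every interior face (where $G_\sigma$ is trivial) and has kernel exactly the sign-representation line $\mathbb Z\cdot\langle 0,1\rangle$ on every boundary face (where $G_\sigma\cong \mathbb Z_2$). Hence $\ker(\hat\phi)$ is supported only on the $2$-cells of $\partial Y$, and there only in the sign-representation direction. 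Consequently an element of $\ker(\phi)$ is precisely an assignment $\sigma \mapsto b_\sigma\in\mathbb Z$ of a sign-representation multiplicity to each $2$-cell $\sigma\in\partial Y^{(2)}$ (all interior-face coordinates and all trivial-representation coordinates being zero) that satisfies $\Phi_e=0$ at every edge $e$.

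Next I would determine which edges impose conditions and what those conditions are. As boundary faces are incident only to boundary edges, the only constraints come from the edges of $\partial Y$, which by \textbf{Fact 3} have stabilizer $\mathbb Z_2$, $D_{2k+1}$, or $D_{2k}$. Feeding $a_1=a_2=0$ into the induced-representation computations — the $\mathbb Z_2$ case being induction along an isomorphism, and the dihedral cases being exactly equations (a) and (b) established in the proof of Lemma \ref{Z-term:lemma} (see Appendix A) — yields: across a $\mathbb Z_2$ or $D_{2k+1}$ edge the two incident multiplicities agree up to the relative orientation sign, whereas across a $D_{2k}$ edge both incident multiplicities are forced to vanish. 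The crux, and the main obstacle, is this last statement: the two reflections bounding an even dihedral edge lie in distinct conjugacy classes of $D_{2k}$, so the two induced sign representations are separated by the pair of one-dimensional characters that are trivial on one reflection class and nontrivial on the other; reading off those two coordinates forces $b_1=b_2=0$.

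Finally I would run the component analysis exactly as in Lemma \ref{Z-term:lemma}, but now with only the single sign-representation degree of freedom per face. Decomposing $\ker(\phi)$ over the connected components of $\partial Y$ (each a closed surface), a non-orientable component carries an embedded M\"obius band, along which the coherence relations force $b=-b=0$ and hence, by connectedness of the face-adjacency graph, $b_\sigma=0$ throughout; an orientable \emph{even} component contains a $D_{2k}$ edge whose vanishing condition seeds a zero that likewise propagates to the whole component; and an orientable \emph{odd} component (all edges $\mathbb Z_2$ or $D_{2k+1}$) admits a coherent orientation under which every $b_\sigma$ equals a common value, contributing a single copy of $\mathbb Z$. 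Summing these contributions gives $\rank(\ker\phi)$ equal to the number $t'$ of orientable odd components of $\partial Y$, and freeness is automatic since $\ker(\phi)$ is a subgroup of the free abelian group $\mathcal D_2$.
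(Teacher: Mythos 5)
Your proposal is correct and follows essentially the same route as the paper: identify $\ker(\phi)$ with sign-representation labelings of the $2$-cells of $\partial Y$ subject to the edge conditions, then do the component-by-component even/odd and orientable/non-orientable casework, with the $D_{2k}$ edges killing even components and each orientable odd component contributing one $\mathbb Z$. The only cosmetic differences are that you make the character-theoretic justification of the $D_{2k}$ vanishing explicit (the paper defers this to its Appendix A) and you rule out non-orientable components via the M\"obius band argument rather than by citing the nonexistence of $2$-cycles on a closed non-orientable surface.
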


\begin{proof}
From the definition of $\phi$, it is easy to see what form an element in $\ker(\phi)$
must have: in terms of the splitting $\mathcal D_2=  \bigoplus _{\sigma\in Y^{(2)}} K_0\big(C^*_r(G_{\sigma})\big)$,
the element can only have non-zero terms in the coordinates corresponding to $2$-cells in $\partial Y$. 
Moreover, in the coordinates $\sigma \in (\partial Y)^{(2)}$, the entries in the corresponding 
$K_0\big(C^*_r(G_{\sigma})\big) \cong \mathbb Z \oplus \mathbb Z$ must lie in the subgroup 
$\mathbb Z \cdot \langle 0, 1\rangle$. Finally, the fact that the elements we are considering lie in $\ker (\Phi _Y)$
means that, at each edge $e\in (\partial Y)^{(1)}$, with incident edges $\sigma_1, \sigma_2$, we must have 
that the corresponding coefficients $\langle 0, b_1\rangle \in K_0\big(C^*_r(G_{\sigma_1})\big)$ and 
$\langle 0, b_2\rangle \in K_0\big(C^*_r(G_{\sigma_2})\big)$ sum up to zero, i.e.~that $b_1+b_2=0$. These
properties almost characterize elements in $\ker (\phi)$. Clearly, we can again analyze the situation one connected
component of $\partial Y$ at a time. As in the argument for Lemma \ref{Z-term:lemma}, there are cases to consider:

\vskip 5pt

\noindent \underline{Even component:} In the case where an element $z\in \ker (\phi)$ is supported entirely
on an even boundary component, there is one additional constraint. For the two faces $\sigma_1, \sigma_2$
incident to the edge with stabilizer $D_{2k}$, the fact that $z\in \ker(\Phi)$ forces the corresponding coefficients to satisfy
$b_1=b_2=a_1=a_2$ (see equation (b) in the proof of Lemma \ref{Z-term:lemma}). Since $z\in \ker(\phi)$, we
also have $a_1=a_2=0$. This implies that the coefficients $b_1=b_2$ must also vanish. But then all the $b_i$
coefficients must vanish. We conclude that any element $z\in \ker(\phi)$ must have zero coefficients on all 
$2$-cells contained in an even component.

\vskip 5pt

\noindent \underline {Odd component:} In the case where an element $z\in \ker (\phi)$ is supported entirely
on an odd boundary component, the conditions discussed above actually do characterize an element 
in $\ker(\phi)$. This is due to the fact that, at every edge, the $b_i$ components are actually independent
of the $a_i$ components (see equation (a) in the proof of Lemma \ref{Z-term:lemma}). But the description
given above is just stating that the $b_i$ form the coefficients for an (ordinary) $2$-cycle in the boundary
component. Such a $2$-cycle can only exist if the boundary component is orientable, in which case there
is a $1$-dimensional family of such $2$-cycles. We conclude that the orientable, odd components each
contribute a $\mathbb Z$ to $\ker(\phi)$, while the non-orientable odd components make no contributions.

\vskip 5pt

\noindent{}Since $t^\prime$ is the number of orientable, odd components in $\partial Y$, the Lemma follows. 
\end{proof}

We now have all the required ingredients to establish:

\begin{Thm}
The group $\ker(\Phi)$ is free abelian of rank $s + t^\prime + 2t + \beta_2(Y)$.
\end{Thm}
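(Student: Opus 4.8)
The plan is to assemble the already-proved structural lemmas about $\ker(\Phi)$ into the stated rank formula. Recall the chain of reductions: Lemma~\ref{split:lemma} gives the direct sum splitting $\ker(\Phi)=\ker(\Phi_Y)\oplus\ker(\Phi_Z)$, so the rank of $\ker(\Phi)$ is the sum of the ranks of the two summands. Lemma~\ref{Z-term:lemma} identifies $\ker(\Phi_Z)$ as free abelian of rank $s+2t$. For the $Y$-side, Lemma~\ref{Y-term:lemma} provides a split surjection $\phi\colon\ker(\Phi_Y)\to\ker(d_Y)$ with direct sum decomposition $\ker(\Phi_Y)\cong\ker(\phi)\oplus\ker(d_Y)$, and Lemma~\ref{phi-term:lemma} computes $\ker(\phi)$ to be free abelian of rank $t'$. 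Since $Y$ is a $2$-dimensional CW-complex, we have $H_2(Y)=\ker(d_Y)$, which is free abelian with rank equal to the second Betti number $\beta_2(Y)$.

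First I would combine the two decompositions to write
$$\ker(\Phi)\;\cong\;\ker(\Phi_Z)\oplus\ker(\phi)\oplus\ker(d_Y).$$
Each of the three summands is free abelian by the cited lemmas (the last because $\ker(d_Y)$ is a subgroup of the free abelian group $\mathcal A_2$, hence itself free abelian), so their direct sum is free abelian and its rank is the sum of the three ranks. Substituting the values $s+2t$, $t'$, and $\beta_2(Y)$ respectively yields that $\ker(\Phi)$ is free abelian of rank $s+t'+2t+\beta_2(Y)$, which is exactly the claim.

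The only genuine point requiring care is confirming that $\operatorname{rank}\ker(d_Y)=\beta_2(Y)$. This is immediate from the definition: $Y$ being $2$-dimensional forces $\mathcal A_3=0$, so there are no $3$-cells and no incoming differential, whence $H_2(Y)=\ker(d_Y)/\operatorname{im}(d_3)=\ker(d_Y)$, and its rank is by definition $\beta_2(Y)$. I do not anticipate a serious obstacle here, as the theorem is essentially a bookkeeping consolidation of the preceding four lemmas; the substantive work—analyzing edge stabilizers, the M\"obius-band orientability arguments, and the splitting constructions—has already been discharged in Lemmas~\ref{split:lemma} through~\ref{phi-term:lemma}. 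The main thing to verify is simply that freeness is preserved through each splitting so that ranks add, which follows since every summand in sight is a subgroup of a finitely generated free abelian group.
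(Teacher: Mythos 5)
Your proposal is correct and follows essentially the same route as the paper's own proof: combine the splitting $\ker(\Phi)=\ker(\Phi_Y)\oplus\ker(\Phi_Z)$ from Lemma~\ref{split:lemma} with the rank computations of Lemmas~\ref{Z-term:lemma}, \ref{Y-term:lemma}, and \ref{phi-term:lemma}, plus the observation that $\ker(d_Y)$ is free abelian of rank $\beta_2(Y)$ because $Y$ is a $2$-complex. The extra remarks on why freeness persists through the splittings are harmless additions to what is, as you say, a bookkeeping consolidation.
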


\begin{proof}
Lemma \ref{split:lemma} provides us with a splitting $\ker(\Phi) = \ker(\Phi_Y) \oplus \ker(\Phi_Z)$. 
Lemma \ref{Z-term:lemma} shows that $\ker(\Phi_Z)$ is free abelian of rank $=s+2t$. Lemma
\ref{Y-term:lemma} yields the splitting $\ker(\Phi_Y) \cong \ker(\phi) \oplus \ker(d_Y)$. Finally, 
Lemma \ref{phi-term:lemma} tells us that $\ker(\phi)$ is free abelian of rank $=t^\prime$, while
the fact that $Y$ is a $2$-complex tells us that $\ker(d_Y)$ is free abelian of rank $=\beta_2(Y)$. 
\end{proof}

As a consequence, we obtain the desired formula for $\beta_2(\mathcal C)$.

\begin{corollary}\label{h2-term:thm}
For our groups $G$, we have that the rank of $H_2(\mathcal C)\otimes \mathbb Q$ is either:
\begin{itemize}
\item $\beta_2(Y)$ if $X/G$ is a closed, oriented, $3$-manifold, or
\item $s + t^\prime + 2t + \beta_2(Y)-1$ otherwise.
\end{itemize}
\end{corollary}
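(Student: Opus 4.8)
The plan is to read off $H_2(\mathcal C)$ directly from its definition as the homology of $\mathcal C$ in the middle degree, and to reduce the rank computation to the preceding Theorem together with Lemma~\ref{third-homology}. Writing $\partial_3:\mathcal C_3\to\mathcal C_2$ for the differential out of the top term, we have $H_2(\mathcal C)=\ker(\Phi)/\im(\partial_3)$, since $\Phi=\partial_2$ is the differential $\mathcal C_2\to\mathcal C_1$ and $\im(\partial_3)\subseteq\ker(\Phi)$ automatically (as $\mathcal C$ is a chain complex). Because $-\otimes\mathbb Q$ is exact on finitely generated abelian groups, rank is additive along short exact sequences, so
$$\rank\big(H_2(\mathcal C)\otimes\mathbb Q\big)=\rank\big(\ker(\Phi)\big)-\rank\big(\im(\partial_3)\big).$$
The first term is supplied by the preceding Theorem, namely $\rank(\ker\Phi)=s+t^\prime+2t+\beta_2(Y)$, so the entire problem reduces to computing $\rank(\im(\partial_3))$.

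For that second term I would exploit the identity $\rank(\im(\partial_3))=\rank(\mathcal C_3)-\rank(\ker(\partial_3))$. Since $\tau$ is the single $3$-cell orbit and its stabilizer is trivial, $\mathcal C_3\cong\mathbb Z$ has rank $1$; and since $\mathcal C$ has no term in degree $4$, we have $\ker(\partial_3)=H_3(\mathcal C)$. Hence $\rank(\im(\partial_3))=1-\rank\big(H_3(\mathcal C)\big)$, and Lemma~\ref{third-homology} evaluates $H_3(\mathcal C)$ in every case: it has rank $1$ when $X/G$ is a closed orientable manifold, and rank $0$ otherwise.

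Combining the two facts gives $\rank\big(H_2(\mathcal C)\otimes\mathbb Q\big)=s+t^\prime+2t+\beta_2(Y)-1+\rank\big(H_3(\mathcal C)\big)$. In the remaining (``otherwise'') case $\rank(H_3(\mathcal C))=0$, and we read off $s+t^\prime+2t+\beta_2(Y)-1$ at once, matching the second bullet. In the closed orientable case $\rank(H_3(\mathcal C))=1$, so the $-1$ cancels and the formula collapses to $s+t^\prime+2t+\beta_2(Y)$; here I would observe that a closed manifold has empty boundary, so there are no dihedral boundary components and no non-dihedral ones either, forcing $s=t=t^\prime=0$ (indeed $Z=\partial Y=\emptyset$). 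The formula therefore reduces to $\beta_2(Y)$, matching the first bullet.

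I do not expect a serious obstacle: the substance is entirely carried by the preceding Theorem and Lemma~\ref{third-homology}. The only points needing care are the bookkeeping that $-\otimes\mathbb Q$ turns the quotient $\ker(\Phi)/\im(\partial_3)$ into a subtraction of ranks, so that any torsion in the inclusion $\im(\partial_3)\hookrightarrow\ker(\Phi)$ is irrelevant, and the verification that the boundary counts $s,t,t^\prime$ genuinely vanish in the closed orientable case, so that the two bullets of the Corollary are mutually consistent on their common boundary. Both are routine.
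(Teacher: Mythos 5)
Your proposal is correct and follows essentially the same route as the paper: the paper likewise obtains the Corollary by noting that $H_2(\mathcal C)$ is the quotient of $\ker(\Phi)$ by the image of $\mathcal C_3\cong\mathbb Z$, so its rank is that of $\ker(\Phi)$ or one less according to whether $\partial_3$ vanishes, which is governed by Lemma~\ref{third-homology}, and that $s=t=t^\prime=0$ in the closed orientable case. No gaps.
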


\vskip 10pt

\noindent{\bf Remark:} Corollary \ref{h2-term:thm} gives us an algorithmically efficient method for computing
$\beta_2(\mathcal C)$, as it merely requires counting certain boundary components of $X/G$ (to determine
the integers $s, t, t^\prime$), along with the calculation of the second Betti number of an explicit $2$-complex
(for the $\beta_2(Y)$ term).


\subsection{Euler characteristic and the rank of $H_1(\mathcal C)$}

Using the procedure described in the previous section, we will now assume that the ranks
$\beta_0(\mathcal C), \beta_2(\mathcal C)$, and $\beta_3(\mathcal C)$
have already been calculated. In order to compute the rank 
of $H_1(\mathcal C)\otimes \mathbb Q$, we recall that any chain complex has an associated {\it Euler characteristic}. 
The latter is defined to be the alternating sum of the ranks of the groups appearing in the chain complex.
It is an elementary exercise to verify that the Euler characteristic 
also coincides with the alternating sum of the ranks of the homology groups of the chain complex.

In our specific case, the Euler characteristic $\chi(\mathcal C)$ of the chain 
complex $\mathcal C$ 
can easily be calculated from the various groups $G_\sigma$, where $\sigma$ ranges over the cells in $\underline{B}G$.
Each cell $\sigma$ in $\underline{B}G$ contributes $(-1)^{\dim \sigma}c(G_\sigma)$, where $c(G_\sigma)$ is the 
number of conjugacy classes in the stabilizer $G_\sigma$ of the cell.
Since the homology groups $H_i(\mathcal C)$ vanish when $i\neq 0, 1, 2, 3$, we also have the alternate formula:
$$\chi (\mathcal C) = \beta_0(\mathcal C) - \beta_1(\mathcal C) + \beta_2(\mathcal C) - \beta_3(\mathcal C)$$
This allows us to solve for the rank of $H_1(\mathcal C)\otimes \mathbb Q$, yielding:

\begin{lemma}\label{h1-rank}
For our groups $G$, we have that the rank of $H_1(\mathcal C)\otimes \mathbb Q$ coincides with 
$\beta_1(\mathcal C) = \beta_0(\mathcal C) + \beta_2(\mathcal C) - \beta_3(\mathcal C) - \chi (\mathcal C)$.
\end{lemma}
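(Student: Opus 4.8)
The plan is to exploit the fact that the Euler characteristic of a chain complex admits two equivalent descriptions: one in terms of the ranks of the chain groups, and one in terms of the ranks of the homology groups. Equating these two expressions and solving for the single unknown rank $\beta_1(\mathcal C)$ will immediately yield the stated formula.

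First I would compute $\chi(\mathcal C)$ directly from the chain groups. Recall that $\mathcal C_p = \bigoplus_{\sigma \in (X/G)^{(p)}} K_q\big(C^*_r(G_\sigma)\big)$ for $q$ even, and that for a finite group $F$ one has $K_q\big(C^*_r(F)\big) \cong \mathbb Z^{c(F)}$. Hence $\rank(\mathcal C_p) = \sum_{\sigma \in (X/G)^{(p)}} c(G_\sigma)$, and therefore
\[
\chi(\mathcal C) = \sum_{p} (-1)^p \rank(\mathcal C_p) = \sum_{\sigma} (-1)^{\dim \sigma}\, c(G_\sigma),
\]
where $\sigma$ ranges over all cells of $\underline{B}G$. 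This quantity is manifestly computable once the cell stabilizers are known, which is exactly the first formula asserted in the preamble to the Lemma.

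Next I would invoke the elementary algebraic fact that for any bounded chain complex of finitely generated abelian groups, the alternating sum of the ranks of the chain groups equals the alternating sum of the ranks of the homology groups. Combining this with the discussion surrounding \textbf{Fact 1} (equivalently, with Lemma \ref{spec-seq-collapse}), which guarantees that $H_i(\mathcal C)$ vanishes outside the range $0 \le i \le 3$, gives
\[
\chi(\mathcal C) = \beta_0(\mathcal C) - \beta_1(\mathcal C) + \beta_2(\mathcal C) - \beta_3(\mathcal C).
\]

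Finally, solving this single linear relation for $\beta_1(\mathcal C)$ produces $\beta_1(\mathcal C) = \beta_0(\mathcal C) + \beta_2(\mathcal C) - \beta_3(\mathcal C) - \chi(\mathcal C)$, and since the rank of a finitely generated abelian group coincides with the dimension of its rationalization, we obtain $\rank\big(H_1(\mathcal C) \otimes \mathbb Q\big) = \beta_1(\mathcal C)$. There is no serious obstacle here: the entire argument is a formal consequence of the invariance of the Euler characteristic, which is why the ranks $\beta_0, \beta_2, \beta_3$ computed in the preceding sections suffice to pin down $\beta_1$. The only point demanding (minor) care is confirming that the two expressions for $\chi(\mathcal C)$ denote the same integer, i.e.~that the rank of each chain group $\mathcal C_p$ is correctly counted by $\sum_\sigma c(G_\sigma)$; but this is immediate from the identification $K_q\big(C^*_r(G_\sigma)\big) \cong \mathbb Z^{c(G_\sigma)}$ already recorded before Lemma \ref{spec-seq-collapse}.
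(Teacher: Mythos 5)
Your proposal is correct and follows essentially the same route as the paper: both compute $\chi(\mathcal C)$ cell-by-cell as $\sum_\sigma (-1)^{\dim\sigma}c(G_\sigma)$, invoke the elementary fact that the Euler characteristic of a bounded complex of finitely generated abelian groups equals the alternating sum of the homology ranks (with vanishing outside $0\le i\le 3$), and solve the resulting linear relation for $\beta_1(\mathcal C)$. Nothing is missing.
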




\section{Some examples}

We illustrate our algorithm by computing the rational topological $K$-theory of several groups. 
The first two examples are classes of groups for which the topological $K$-theory has already been computed.
Since our algorithm does indeed recover (rationally) the same results, these examples serve as a 
check on our method. The last three examples provide some new computations.

The first example considers the particular case where $G$ is additionally assumed to be torsion-free. 
As a concrete special case, we deal with any semi-direct product of $\mathbb Z^2$ with $\mathbb Z$
(the integral computation for these groups can be found in the recent thesis of Isely \cite{I}). The second
example considers a finite extension of the integral Heisenberg group by $\mathbb Z_4$.
The {\it integral} topological $K$-theory (and algebraic $K$- and $L$- theory) for this group has already 
been computed by L\"uck \cite{Lu3}. 

The third and fourth classes of examples are hyperbolic Coxeter groups that have previously been considered 
by Lafont, Ortiz, and Magurn in \cite[Example 7]{LOM}, and \cite[Example 8]{LOM} respectively (where their lower 
algebraic $K$-theory was computed). The fifth example is an affine split crystallographic group, whose 
algebraic $K$-theory has been studied by Farley and Ortiz \cite{FO}.


\subsection{Torsion-free examples.}\label{torsion-free-examples}
 In the special case where $G$ is {\it torsion-free}, 
our algorithm becomes particularly simple, as we now proceed to explain.

\vskip 5pt

Let $G$ be a torsion-free group with a cocompact, $3$-manifold model $X$ for the classifying space $\underline{E}G=EG$.
Firstly, recall that $\beta _0 (\mathcal C)
= cf(G)$, where $cf(G)$ denotes the number of conjugacy classes of elements of finite order in $G$ (our
Lemma \ref{rank-K0} provides a way of computing this integer from the $1$-skeleton of $X/G$). Since $G$
is torsion-free, we obtain that $\beta _0 (\mathcal C)= 1$. 

Next, we consider the orbit space $M:=X/G$. Recall that any boundary component in the $3$-manifold
$M$ gives $2$-cells with stabilizer $\mathbb Z_2$. Since $G$ is torsion-free, the orbit space 
$M$ has no boundary, hence is a {\it closed} $3$-manifold. Then
Lemma \ref{third-homology} tells us that 
\[
	\beta_3(\mathcal C) = 
\begin{cases}
	1 & \text{if $M$ orientable},\\
	0 & \text{if $M$ non-orientable}.
\end{cases}
\]

To compute $\beta_2(\mathcal C)$ we apply Corollary \ref{h2-term:thm}. The 2-simplex $Y$ is just the 2-skeleton of $M$ 
and, as $\partial M= \emptyset$, we obtain that
\[
	\beta_2(\mathcal C) = 
\begin{cases}
	\beta_2(Y) & \text{if $M$ orientable},\\
	\beta_2(Y)-1 & \text{if $M$ non-orientable}.
\end{cases}
\]
Note that the $2^\text{nd}$ Betti number of $Y=M^{(2)}$ can be attained from that of $M$, as follows. Since $M$ is 
obtained from $Y$ by attaching a single 3-cell, the Mayer-Vietoris exact sequence gives
\[
	\xymatrix{0 \ar[r] & H_3(M) \ar@{^{(}->}[r] & H_2(S^2) \ar[r]^-g & H_2(Y) \oplus H_2(\mathbb D^3) \ar@{>>}[r] & H_2(M) \ar[r] & 0 } 
\]
(Here $\mathbb D^3$ is the attaching 3-disk.) 
Recall that $H_2(S^2)\cong \mathbb Z$ and $H_2(\mathbb D^3)=0$. Hence if $M$ is orientable, 
$H_3(M) \cong \mathbb Z$, the image of the map $g$ is then torsion and tensoring with $\mathbb Q$ gives 
$\beta_2(Y)=\beta_2(M)$. If $M$ is non-orientable, $H_3(M) = 0$, the map $g$ is injective and we have 
$\beta_2(Y)-1=\beta_2(M)$.
Hence in all cases we actually obtain that $\beta_2(\mathcal C)  = \beta_2(M)$.

To compute $\beta_1(\mathcal C)$ we should find $\chi(\mathcal C)$. Since $G$ is torsion-free all the 
isotropy groups are trivial and thus $\chi(\mathcal C) = \chi(M)$. Since $M$ is a closed 3-manifold, $\chi(M)$ and therefore 
$\chi(\mathcal C)$ are zero. Finally, Lemma \ref{h1-rank} gives
\[
	\beta_1(\mathcal C) = \beta_0(\mathcal C)  + \beta_2(\mathcal C)  - \beta_3(\mathcal C)  - \chi(\mathcal C) = \beta_2(M) - \beta_3(\mathcal C) +1,
\]
which simplifies to two cases
\[
\beta_1(\mathcal C) = 
\begin{cases}
	\beta_2(M) & \text{if $M$ is orientable},\\
 	\beta_2(M)+1 & \text{if $M$ is not orientable}.
\end{cases}
\]
Finally applying Lemma \ref{rational-expression}, we deduce the:

\begin{corollary}\label{torsion-free-case}
Let $G$ be a torsion-free group, and $X$ be a cocompact $3$-manifold model for $\underline{E}G=EG$.
Assume that the quotient $3$-manifold $M= X/G$ is geometrizable (this is automatic, for instance, if $M$ is orientable). 
Then we have that
\[
	\rank\left( K_q(C_r^\ast(G)) \otimes \mathbb Q \right) = \beta_2(M)+1
\]
holds for all $q$.
\end{corollary}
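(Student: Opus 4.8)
The plan is to feed the four rank computations assembled in this section into Lemma \ref{rational-expression} and then check that every case collapses to the single value $\beta_2(M)+1$. Lemma \ref{rational-expression} already reduces the statement to knowing the four Betti numbers $\beta_i(\mathcal C)$: for $q$ even the rank is $\beta_0(\mathcal C)+\beta_2(\mathcal C)$, and for $q$ odd it is $\beta_1(\mathcal C)+\beta_3(\mathcal C)$. So the whole task is bookkeeping with these four integers, all of which have been pinned down above.

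First I would record the inputs. Since $G$ is torsion-free, Lemma \ref{rank-K0} gives $\beta_0(\mathcal C)=cf(G)=1$. Torsion-freeness also forces $\partial M=\emptyset$, so $M=X/G$ is a closed $3$-manifold, and Lemma \ref{third-homology} yields $\beta_3(\mathcal C)=1$ when $M$ is orientable and $\beta_3(\mathcal C)=0$ otherwise. Applying Corollary \ref{h2-term:thm} together with the Mayer--Vietoris argument (comparing $Y=M^{(2)}$ with $M$) gives $\beta_2(\mathcal C)=\beta_2(M)$ in both the orientable and non-orientable cases. Finally, as all isotropy is trivial we have $\chi(\mathcal C)=\chi(M)=0$, because $M$ is a closed odd-dimensional manifold.

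The key observation, which I would isolate, is that the vanishing of $\chi(\mathcal C)$ makes the odd and even answers agree automatically. Indeed Lemma \ref{h1-rank} reads $\beta_1(\mathcal C)=\beta_0(\mathcal C)+\beta_2(\mathcal C)-\beta_3(\mathcal C)$ precisely because $\chi(\mathcal C)=0$, and therefore
\[
\beta_1(\mathcal C)+\beta_3(\mathcal C)=\beta_0(\mathcal C)+\beta_2(\mathcal C).
\]
Thus the odd total equals the even total with no case analysis: the two copies of $\beta_3(\mathcal C)$ simply cancel. Substituting the uniform values $\beta_0(\mathcal C)=1$ and $\beta_2(\mathcal C)=\beta_2(M)$, both parities return $\beta_0(\mathcal C)+\beta_2(\mathcal C)=\beta_2(M)+1$, as claimed.

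I expect no genuine obstacle, since the hard geometric work is already carried out in the preceding lemmas; the only point worth stating explicitly is why the answer is insensitive both to the parity of $q$ and to the orientability of $M$. For the parity this is the identity above forced by $\chi(\mathcal C)=0$; for orientability it is the fact that, in the non-orientable case, the drop of $\beta_3(\mathcal C)$ from $1$ to $0$ is exactly compensated by a $+1$ in $\beta_1(\mathcal C)$, so that $\beta_1(\mathcal C)+\beta_3(\mathcal C)$ never changes. Both phenomena are subsumed by the single algebraic relation $\beta_1(\mathcal C)+\beta_3(\mathcal C)=\beta_0(\mathcal C)+\beta_2(\mathcal C)$, so the cleanest write-up avoids splitting into cases entirely and simply invokes $\chi(\mathcal C)=0$ once.
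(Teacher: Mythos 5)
Your proposal is correct and follows essentially the same route as the paper: the same four inputs ($\beta_0(\mathcal C)=1$, $\beta_3(\mathcal C)\in\{0,1\}$ by orientability, $\beta_2(\mathcal C)=\beta_2(M)$ via the Mayer--Vietoris comparison of $Y=M^{(2)}$ with $M$, and $\chi(\mathcal C)=\chi(M)=0$) fed into Lemma \ref{rational-expression}. Your one refinement --- observing that $\chi(\mathcal C)=0$ forces $\beta_1(\mathcal C)+\beta_3(\mathcal C)=\beta_0(\mathcal C)+\beta_2(\mathcal C)$ and thereby eliminates the orientability case split that the paper carries out explicitly for $\beta_1(\mathcal C)$ --- is a tidier presentation of the same computation, not a different argument.
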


\vskip 5pt

\noindent{\bf Remark:} The number above is the sum of the even-dimensional Betti numbers of $M$ (which coincides with the sum of the odd-dimensional Betti numbers of $M$, by Poincar\'e duality) --- compare this with the Remark after Lemma \ref{rational-expression}.

\vskip 5pt

\noindent{\bf Remark:} Note that for $G$ torsion-free, the dimension of the singular part is $-1$ and hence Lemma 3.21 in \cite{MV} gives $H_i(\mathcal C) \cong H_i(M)$ for $i > 0$ and an injection $H_0(\mathcal C) \hookrightarrow H_0(M)$. From this it follows that $\beta_i(\mathcal C) = \beta_i(M)$ for $i=1,2,3$ and $\beta_0(\mathcal C) = \beta_0(M)$ since $1 \le \beta_0(\mathcal C) \le \beta_0(M)=1$. This is shown above by direct application of our algorithm.

\vskip 5pt

\noindent {\bf Semi-direct product of $\mathbb Z^2$ and $\mathbb Z$.} For a concrete example of the torsion-free case, consider a semi-direct product $G_\alpha= \mathbb Z^2 \rtimes_\alpha \mathbb Z$, 
where $\alpha \in \aut(\mathbb Z^2) = GL_2(\mathbb Z)$. The automorphism $\alpha$ can be realized (at the level of the 
fundamental group) by an affine self diffeomorphism of the 2-torus $T^2 = S^1 \times S^1$, $f \colon T^2 \to T^2$. The 
mapping torus $M_f$ of the map $f$ yields a closed 3-manifold which is aspherical and satisfies $\pi_1(M_f) \cong G_\alpha$. 
Hence it is a model of $BG_\alpha$ and its universal cover a model of $EG_\alpha$. Since $G_\alpha$ is torsion-free 
(as it is the semi-direct product of torsion-free groups), these spaces are also models of $\underline{B}G_\alpha$ 
respectively $\underline{E}G_\alpha$. In particular, these examples fall under the purview of Corollary \ref{torsion-free-case}, 
telling us that $\rank\left( K_q(C_r^\ast(G_\alpha)) \otimes \mathbb Q \right) = \beta_2(M_f)+1$.
To complete the calculation, we just need to compute the $2^\text{nd}$ Betti number of the 3-manifold $M_f$. 
This follows from an straightforward application of the Leray-Serre spectral sequence. We have included the 
details in Appendix B and here we only quote the result
\[
\beta_2(M_f) = 
\begin{cases}
3 & \text{if } \alpha = \text{Id},\\
2 & \text{if } \det(\alpha)=1, \text{tr}(\alpha)=2, \alpha \neq \text{Id},\\
1 & \text{if } \det(\alpha)=1, \text{tr}(\alpha)\neq 2,\\
1 & \text{if } \det(\alpha)=-1, \text{tr}(\alpha)= 0,\\
0 & \text{if } \det(\alpha)=-1, \text{tr}(\alpha)\neq 0.\\
\end{cases}
\]
Adding 1 we obtain 
\[
K_q(C_r^\ast(G_\alpha)) \otimes \mathbb Q  \cong  
\begin{cases}
\mathbb Q^4 & \text{if } \alpha = \text{Id},\\
\mathbb Q^3 & \text{if } \det(\alpha)=1, \text{tr}(\alpha)=2, \alpha \neq \text{Id},\\
\mathbb Q^2 & \text{if } \det(\alpha)=1, \text{tr}(\alpha)\neq 2,\\
\mathbb Q^2 & \text{if } \det(\alpha)=-1, \text{tr}(\alpha)= 0,\\
\mathbb Q  & \text{if } \det(\alpha)=-1, \text{tr}(\alpha)\neq 0.\\
\end{cases}
\]
These results agree with the integral computations in Isely's thesis \cite[pp.~5-7]{I}, giving us a first
check on our method.


\subsection{Nilmanifold example}\label{Nil-mfld-example} In the previous section, we discussed examples where
the group was torsion-free, and hence the quotient space was a closed $3$-manifold. In this next example, we have
a group {\it with} torsion, but with quotient space again a closed $3$-manifold.

\vskip 5pt

The real Heisenberg group $\hei(\mathbb R)$ is the Lie group of upper unitriangular, $3 \times 3$ matrices with real entries.
It is naturally homeomorphic to $\mathbb R^3$. The integral Heisenberg group $\hei (\mathbb Z)$ is the discrete subgroup 
consisting of matrices whose entries are in $\mathbb Z$. There is an automorphism $\sigma \in \aut \big(\hei(\mathbb R)\big)$ 
of order $4$ given by:
$$ \sigma:
\left[ \begin{array}{ccc}
1 & x & z \\
0 & 1 & y\\
0 & 0 & 1 \end{array} \right] \mapsto 
\left[ \begin{array}{ccc}
1 & -y & z-xy \\
0 & 1 & x\\
0 & 0 & 1 \end{array} \right].
$$
This automorphism restricts to an automorphism of the discrete subgroup $\hei (\mathbb Z)$, allowing us to define the 
group $G:= \hei (\mathbb Z) \rtimes \mathbb Z_4$. An explicit presentation of the group $G$ is given by

$$G:= \Bigg\langle a, b, c, t   \hskip 5pt \Bigg|  \hskip 5pt
\parbox{2.3in}
{\centerline{$[a,c] = [b, c] =1, \hskip 5pt [a, b] =c, \hskip 5pt t^4=1$}

\centerline{$tat^{-1}=b, \hskip 5pt tbt^{-1}= a^{-1}, \hskip 5pt tct^{-1} = c$}}
\Bigg\rangle $$
where as usual, $[x,y]$ denotes the commutator of the elements $x, y$. In the above presentation, we are identifying the
generators $a, b, c$ with the matrices in $\hei (\mathbb Z)$ given by
$$T_a=
\left[ \begin{array}{ccc}
1 & 1 & 0 \\
0 & 1 & 0\\
0 & 0 & 1 \end{array} \right] ,
\hskip 5pt
T_b=
\left[ \begin{array}{ccc}
1 & 0 & 0 \\
0 & 1 & 1\\
0 & 0 & 1 \end{array} \right] ,
\hskip 5pt
T_c=
\left[ \begin{array}{ccc}
1 & 0 & 1 \\
0 & 1 & 0\\
0 & 0 & 1 \end{array} \right].
$$
These generate the normal subgroup $\hei (\mathbb Z ) \triangleleft G$, 
while the conjugation by the last generator $t$ acts via the automorphism $\sigma \in \aut \big( \hei (\mathbb Z)\big)$.

The action of $\hei (\mathbb Z)$ on $\hei (\mathbb R)$ given by left multiplication and the action of $\mathbb Z_4$ on 
$\hei (\mathbb R)$ given by the automorphism $\sigma$ fit together to give an action of the group $G$ on $\hei (\mathbb R)$.
It is shown in \cite[Lemma 2.4]{Lu3} that this action on $\hei (\mathbb R)$ provides a cocompact model for $\underline{E}G$,
with orbit space $G \backslash \underline{E}G$ homeomorphic to $S^3$. In order to apply our algorithm, we need to identify
a $G$-CW-structure on $\hei (\mathbb R)$. Let us identify $\mathbb R^3$ with $\hei (\mathbb R)$ via the map:
$$(x, y, z) \leftrightarrow
\left[ \begin{array}{ccc}
1 & x & z \\
0 & 1 & y\\
0 & 0 & 1 \end{array} \right].
$$
Via this identification, we will think of $G$ as acting on $\mathbb R^3$. 

The action of the index four subgroup $\hei (\mathbb Z) \triangleleft G$ on $\mathbb R^3$
\[
    (n,m,l) \cdot (x,y,z) = (x+n,y+m,z+ny+l)
\]
 is free. The quotient
space $\hei(\mathbb Z) \backslash \mathbb R^3$ can be identified in two steps. First, we quotient out by the 
normal subgroup $H:=\langle T_b, T_c\rangle \cong \mathbb Z \otimes \mathbb Z$. On any hyperplane given by fixing the $x$-coordinate $x = x_0$, the subgroup $H$ leaves the hyperplane invariant, with the generators $T_b, T_c$ translating by one in the $y$ and $z$ coordinates respectively. Quotienting out by $H$, we obtain that $H \backslash \mathbb R^3$ is homeomorphic to $\mathbb R \times T^2$, where the $T^2$ refers to the standard torus obtained from the unit square (centered at the origin) by identifying the opposite sides. The quotient $\hei (\mathbb Z) \backslash \mathbb R^3$ can now be identified by looking at the action of the quotient group $\hei(\mathbb Z) \slash H$ on the space $\mathbb R \times T^2$. The
generator for $\mathbb Z\cong \hei(\mathbb Z) \slash H$, being the image of the matrix $T_x\in  \hei(\mathbb Z)$, acts by 
$(x, y, z) \mapsto (x+1, y, z+y)$. Putting this together, we see that a fundamental domain for the $\hei (\mathbb Z)$-action
on $\mathbb R^3$ is given by the unit cube $[-1/2, 1/2] ^3$ centered at the origin. The quotient $3$-manifold 
$M:=\hei(\mathbb Z) \backslash \mathbb R^3$ can now be obtained from the cube via a suitable identification of the faces. 
The manifold $M$ can also be thought of as the mapping torus of the map $\phi: T^2\rightarrow T^2$ given by $(y, z)\mapsto
(y, y+z)$ (mod $1$).

Next, we identify a fundamental domain for the $G$-action on $\mathbb R^3$. Observe that, since 
$\hei (\mathbb Z) \triangleleft G$, there is an induced $G/\hei (\mathbb Z) \cong \mathbb Z_4$ on 
$M$, and a natural identification between $G\backslash \mathbb R^3$
and $\mathbb Z_4 \backslash M$. The manifold $M$ naturally fibers over $T^2$, with fiber $S^1$, via the projection
onto the $(x, y)$-plane. The $\mathbb Z_4$ action preserves the $S^1$-fibers, so induces an action on the $2$-torus
$T^2$. At the level of the fundamental domain $[-1/2, 1/2]^2 \subset \mathbb R^2$ in the $(x,y)$-plane, 
the $\mathbb Z_4$-action is given
by $(x, y) \mapsto (-y, x)$. This tells us that a fundamental domain for the $\mathbb Z_4$-action can be 
obtained by restricting to the square $[0,1/2] \times [0,1/2]$. As far as the isotropy goes, 
there are four points in $T^2$ with non-trivial
stabilizer: the images of points $(0,0)$ and $(1/2, 1/2)$ both have stabilizer $\mathbb Z_4$, and the images of
the points $(0, 1/2)$ and $(1/2, 0)$, both have stabilizer $\mathbb Z_2$ (and lie in the same $\sigma$-orbit).  

We conclude that a fundamental domain for the $G$-action on $\mathbb R^3$ is given by 
the rectangular prism $P:=[0, 1/2] \times [0, 1/2] \times [-1/2, 1/2] \subset \mathbb R^3$ (Figure 1). The interior of $P$ gives the
single $3$-cell orbit for the equivariant polyhedral $G$-CW-structure on $\mathbb R^3$. 
For the isotropy groups, we just need to understand the action on the four vertical lines 
lying above each of the four points $(0,0), (1/2, 0), (0, 1/2),$ and $(1/2, 1/2)$. It is easy to see that the
vertical line $(0,0,z)$ consists entirely of points with stabilizer $\mathbb Z_4$, while the vertical lines 
$(1/2, 0, z)$ and $(0, 1/2, z)$ both have stabilizer $\mathbb Z_2$. On the other hand, the action of the 
element of order $4$ on the $S^1$-fiber above the point $(1/2, 1/2)$ can be calculated, 
and consists of a rotation by $\pi/4$ on the $S^1$-fiber. So the stabilizers for points on the line $(1/2, 1/2, z)$ 
are all trivial.


\begin{figure}
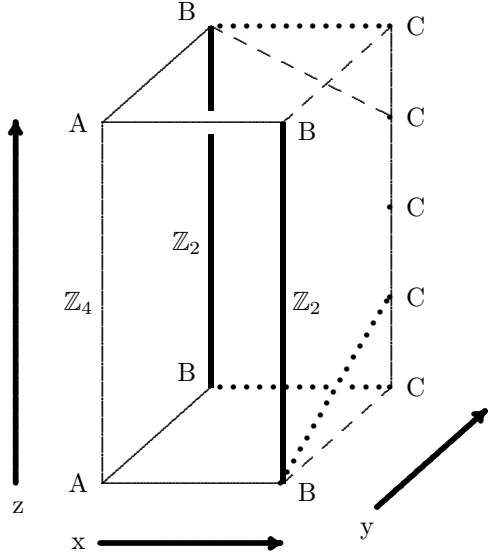
   
\label{fig-1}
\begin{center}

\vbox{\beginpicture
  \setcoordinatesystem units <1.6cm,1.6cm> point at -.4 2.5
  \setplotarea x from -.8 to 3, y from -.6 to 4

  \def\smallbul{\hskip .8pt\circle*{2.2}}
  \linethickness=.7pt

 \def\myarrow{\arrow <4pt> [.2, 1]}


\setsolid
  \plot 1.7 0 3.2 0   /
  \plot 1.7 3 3.2 3 /
  \plot 1.7 0 2.6 0.8 /
  \plot 1.7 3 2.6 3.8 /
  \plot 1.7 0 1.7 3 /
  \plot 4.1 0.8 4.1 3.8 /

\linethickness=2pt
  \putrule from 3.2 0 to 3.2 3  

\linethickness=2pt
  \putrule from 2.6 0.8 to 2.6 2.9

\linethickness=2pt
  \putrule from 2.6 3.1 to 2.6 3.8

\setdashes
  \plot 3.2 0 4.1 0.8   /
  \plot 3.2 3 4.1 3.8   /
  \plot 2.6 3.8 4.1 3.05 /

\setdots
\setplotsymbol ({\circle*{2}}) 
  \plot 2.7 3.8 4.1 3.8   /
  \plot 2.7 0.8 3.6 0.8   /
  \plot 3.8 0.8 4.1 0.8 /
  \plot 3.2 0 4.1 1.55 /

  \put {\smallbul} at 4.1 0.8 
  \put {\smallbul} at 4.1 3.8
  \put {\smallbul} at 4.1 2.3
  \put {\smallbul} at 4.1 1.55
  \put {\smallbul} at 4.1 3.05
  
  \put {A}  at  1.5 0
  \put {A}  at  1.5 3
  \put {$\mathbb Z_4$} at 1.5 1.5
  \put {B} [t] at  3.4 0
  \put {B} [t] at  3.4 3 
  \put {$\mathbb Z_2$} at 3.4 1.5                                                   
  \put {B} [t] at  2.4 1
  \put {B} [t] at  2.4 4
  \put {$\mathbb Z_2$} at 2.4 2                                                
  \put {C} at 4.3 0.8 
  \put {C} at 4.3 3.8
  \put {C} at 4.3 2.3
  \put {C} at 4.3 1.55
  \put {C} at 4.3 3.05
  \put {x} at 1.5 -.5
  \put {y} at 3.9 -.4
  \put {z} at 1 -.2

\setsolid
\myarrow from  1.7 -.5 to 3.2 -.5 
\myarrow from 4 -.2 to 4.9 0.6
\myarrow from 1 0 to 1 3

\endpicture}

\caption{$P=[0,\frac{1}{2}]\times [0,\frac{1}{2}] \times [-\frac{1}{2},\frac{1}{2}]$ is a fundamental
  polyhedron for the action of $G$ on ${\mathbb R}^3$. In the quotient space $G \backslash {\mathbb R}^3$, 
  vertices with the same label
  are identified, as are edges with the same endpoints and the same shading. The four
  edges with both endpoints labelled $C$ are identified with the upward orientation. All faces
  with the same labels are identified in quotient space. Three edges (two in the quotient) have non-trivial
  isotropy, as indicated.}
\end{center}
\end{figure}


The last task remaining is to identify the gluings on the boundary of $P$.
First, we have that the top and bottom squares of $P$ are identified (via $T_z\in G$). Secondly, the two sides incident
to the $z$-axis get ``folded together'' by $\sigma\in G$ (which rotates the front face $\pi/2$ radians to the left side face). Finally,
the element $T_x \circ \sigma$ maps the hyperplane $y=1/2$ (containing the back face) to the hyperplane
$x=1/2$ (containing the right side face). This element takes the line $(0, 1/2, z)$ to the line $(1/2, 0, z)$, identifying together the 
corresponding edges of $P$. On the line of intersection of these two hyperplanes, the element acts by 
$(1/2, 1/2, z)\mapsto (1/2, 1/2, z + 1/4)$. These give us the identifications between the faces of $P$, allowing us to 
obtain the description of $G\backslash \mathbb R^3$ shown in Figure 1. 

\begin{example}
For the group $G := \hei (\mathbb Z) \rtimes \mathbb Z_4$ described above, we have that 
$\rank\Big(K_0\big( C^*_r(G)\big)\otimes \mathbb Q\Big) = 5$ and 
$\rank\Big(K_1\big( C^*_r(G)\big)\otimes \mathbb Q\Big) = 5$.
\end{example}

Before establishing this result, we note that this is consistent with the computation by L\"uck, who 
showed that $K_n\big( C^*_r(G)\big) \cong \mathbb Z^5$ for all $n$ (see \cite[Thm. 2.6]{Lu3}). This serves as a second
check on our algorithm, and is, to the best of our knowledge, the only example in the literature of an explicit computation 
for the topological $K$-theory of a $3$-orbifold group {\it with non-trivial torsion}.



\begin{figure}
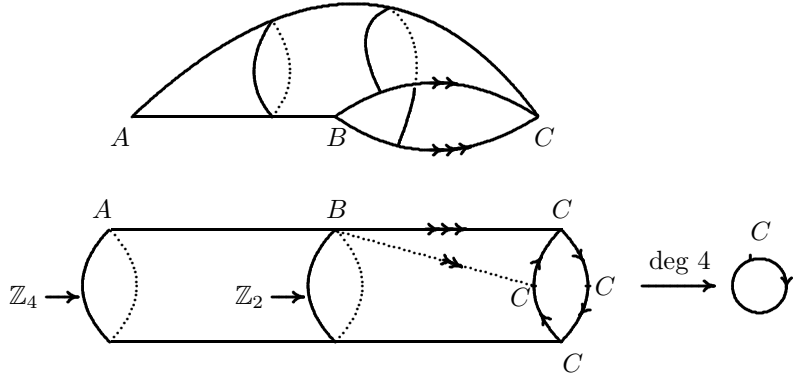

\begin{center}\hfil
\hbox{
\vbox{\beginpicture
\setcoordinatesystem units <1.2cm,1.5cm> point at -1 5
\setplotarea x from -.5 to 8, y from -.5 to 3.2
\linethickness=.7pt
%
\def\myarrow{\arrow <4pt> [.2, 1]} 
\setplotsymbol ({\circle*{.4}})
\plotsymbolspacing=.3pt        
\putrule from 0 0 to 5 0
\putrule from 0 1 to 5 1
\myarrow from 3.5 1 to 3.65 1 
\myarrow from 3.65 1 to 3.8 1
\myarrow from 3.8 1 to 3.95 1
\putrule from .25 2 to 2.5 2
\myarrow from 5.9 .5 to 6.7 .5
\put {deg 4} [b] at  6.3 .6
\circulararc 360 degrees from  7.5 .5 center at 7.2 .5 
\myarrow from 7.5 .5 to 7.5 .48
\plot 7.1 .77  7.1 .73 /
\put {$C$} [b] at  7.2 .9
\setdots <2pt>
\plot 2.53 .98   4.7 .5 /
\setsolid
\myarrow from 3.7 .729 to 3.8 .7
\myarrow from 3.8 .7 to 3.9 .671

\put {$A$} [b] at  -.1 1.1
\put {$B$} [b]  at  2.5  1.1
\put {$C$} [b] at  5 1.1
\put {$\mathbb{Z}_4$} [r] at  -.8 .4
\myarrow from -.7 .4 to -.35 .4
\put {$\mathbb{Z}_2$} [r]  at  1.7 .4
\myarrow from 1.8 .4 to 2.15 .4

\plot 4.68 .5  4.72 .5 /
\plot 5.28 .5  5.32 .5 /
\put {$C$} [tl] at  5 -.1
\put {$C$} [r] at  4.65 .4
\put {$C$} [l] at  5.36 .5

\put {$A$} [t] at  .1 1.9
\put {$B$} [t] at  2.5 1.9
\put {$C$} [t] at  4.8 1.9

\setquadratic
\plot  
.25 2   2.7 3    4.75 2 /
\plot  
2.5 2  3.6 2.3  4.75 2 /
\plot  
2.5 2   3.6 1.7  4.75 2 /
\myarrow from 3.55 2.3 to 3.7 2.3
\myarrow from 3.7 2.3 to 3.85 2.29
\myarrow from 3.55 1.7 to 3.7 1.7
\myarrow from 3.7 1.7 to 3.85 1.71
\myarrow from 3.85 1.71 to 4 1.73

\plot  
5 0   5.3 .5   5 1 /
\plot  
5 0   4.7 .5   5 1 /
\myarrow from 4.774 .74 to 4.784 .751
\myarrow from 5.24 .73 to 5.25 .72 
\myarrow from 4.784 .24 to 4.774 .25
\myarrow from 5.22 .25 to 5.21 .24

\setdots <2pt>
\plot  
2.5 0   2.8 .5   2.5 1 /
\setsolid
\plot  
2.5 0   2.2 .5   2.5 1  /

\setdots <2pt>
\plot  
0 0   .3 .5   0 1 /
\setsolid
\plot  
0 0   -.3 .5   0 1 /

\setdots <2pt>
\plot  
1.8 2  2 2.4  1.8 2.85 /
\setsolid
\plot  
1.8 2  1.6 2.4  1.8 2.85 /

\plot  
3  2.22  2.84  2.7   3.1 2.96 /
\setdots <2pt>
\plot  
3.2 1.75  3.4 2.5   3.1 2.96  /
\setsolid
\plot  
3.2 1.75  3.324 2.04   3.38 2.25 /

\endpicture}
}
\end{center}

\caption{Quotient space $G\backslash{\mathbb R}^3$. The four side faces of
  the polyhedron $P$ fold up into the two adjacent cylinders. On the right,
  the boundary circle of the cylinder gets attached to the circle by a degree
  4 map.  The top and bottom faces of $P$ get identified into a single
  square, which attaches to the cylinder as indicated. The two loops in the
  cylinder based at $A,B$ have isotropy ${\mathbb Z}_4$ and ${\mathbb Z}_2$
respectively. All remaining points have trivial isotropy.} 

\end{figure}


\begin{proof} We apply our algorithm, using the polyhedron $P$ described above. For the $\sim$ equivalence classes
on $F(G)$, we note that the quotient space $G\backslash \mathbb R^3$ has three vertices, one each with
stabilizer $\mathbb Z_4$ (vertex $A$), $\mathbb Z_2$ (vertex $B$), and the trivial group (vertex $C$). 
The edges joining distinct edges all have trivial stabilizer,
allowing us to identify all the identity elements together. We conclude that there are precisely five $\sim$ equivalence classes,
corresponding to the three non-trivial elements in the $\mathbb Z_4$ vertex stabilizer, the single non-trivial element in
the $\mathbb Z_2$ vertex stabilizer, and the equivalence class combining all the trivial elements. This gives us 
$\rank (H_0(\mathcal C)\otimes \mathbb Q) = 5$.

Next we consider the quotient space $G\backslash \mathbb R^3$. The faces of $P$ are pairwise identified, so the quotient
space is a closed manifold. Moreover, with respect to the induced orientation on $\partial P$, the identifications between
the faces are orientation {\it reversing}, so the quotient space is an orientable closed $3$-manifold. Lemma \ref{third-homology} 
gives us that $H_3(\mathcal C) \cong \mathbb Z$, and hence that $\rank (H_3(\mathcal C)\otimes \mathbb Q) = 1$. Note that,
as mentioned earlier, \cite[Lemma 2.4]{Lu3} shows that the quotient space is actually a $3$-sphere (but we do not need this
fact for our computation).

The quotient space has empty boundary, so $s=t=t^\prime =0$. The $2$-complex $Y$ is just the $2$-skeleton of the 
quotient space. This is the image of the boundary of $P$ after performing the required identifications. As such, $Y$
is constructed from two squares, a triangle, and a hexagon (see Figure 2). Note that the square corresponding to the front
face of $P$ (which also gets identified to the left face) folds up to a cylinder in $Y$, as its top and bottom edge get
identified together (left most cylinder in Figure 2). The union of the hexagon and triangle, forming the back face of $P$ 
(which also gets identified to the right face), similarly folds up to another cylinder in $Y$ (right most cylinder in Figure 2). 
The two cylinders attach together along a common boundary loop (image of the edge $BB$) to form a single long cylinder. 
At one of the endpoints, the cylinder attaches to a single
loop (image of the edge $CC$) by a degree four map. So ignoring for the time being the last square, we have a subcomplex
of $Y$ which deformation retracts to $S^1$ (as it coincides with the mapping cylinder of the degree four map of $S^1$). 
Up to homotopy, we conclude that $Y$ coincides with $S^1$, along with a single  square attached. The square comes 
from the top face of $P$ (which also gets identified with the bottom face), which, after composing with the homotopy to 
$S^1$, attaches to the $S^1$ via a degree one map of the boundary. This tells us that $Y$ is homotopy equivalent to a 
$2$-disk, and hence is contractible. By Corollary \ref{h2-term:thm}, we conclude that 
$\rank \big(H_2(\mathcal C)\otimes \mathbb Q\big) = 0$.

Finally, we compute the Euler characteristic of $\mathcal C$. We have three vertices, one each with stabilizer
$\mathbb Z_4$, $\mathbb Z_2$, and trivial. This gives an overall contribution of $+7$ to $\chi (\mathcal C)$. 
We have six edges, one with stabilizer $\mathbb Z_4$, one with stabilizer $\mathbb Z_2$, and the remainder
with trivial stabilizer. This contributes $-10$ to $\chi (\mathcal C)$. There are four faces with trivial stabilizer, contributing 
$+4$ to  to $\chi (\mathcal C)$. There is one $3$-cell with trivial stabilizer, contributing $-1$. Summing these
up, we see that $\chi(\mathcal C) = 7 -10 +4 -1 =0$. From Lemma \ref{h1-rank}, we see that 
$\rank \big(H_1(\mathcal C)\otimes \mathbb Q \big)=4$. Applying Lemma \ref{rational-expression}, 
we deduce that both the rational $K$-groups have rank $=5$, as claimed.
\end{proof}


\subsection{Hyperbolic reflection groups - I}\label{hyp-I-example}

Consider the
groups $\Lambda _n$, $n\geq 5$, given by the following presentation:

$$\Lambda _n:= \Bigg\langle
y, z, x_i, \hskip 5pt 1\leq i \leq n
\hskip 5pt \Bigg|  \hskip 5pt
\parbox{2.5in}
{\centerline{$y^2, z^2,$}

\centerline{$x_i^2,  (x_ix_{i+1})^2, (x_iz)^3, (x_iy)^3$, \hskip 5pt $1\leq i \leq n $}}
\Bigg\rangle$$
The groups $\Lambda _n$ are Coxeter groups, and the presentation given
above is in fact a Coxeter presentation of the group. 


\begin{figure}
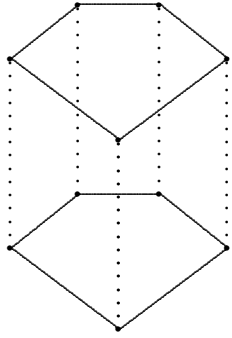
  
\label{graph}
\begin{center}

\vbox{\hbox{\beginpicture
  \setcoordinatesystem units <1.8cm,1.8cm> point at -.4 2.5
  \setplotarea x from -1 to 5, y from -.3 to 3

\def\smallbul{\hskip .8pt\circle*{2.2}}



  \setsolid
  \plot 2.5 0   1.7 .6  2.2 1   2.8 1    3.3 .6   2.5 0 /
  \plot 2.5 1.4  1.7 2  2.2 2.4  2.8 2.4   3.3 2  2.5 1.4 /

\setplotsymbol ({\circle*{1.2}}) 
\setdots

  \plot 2.5 0 2.5 1.4 /
  \plot 1.7 .6  1.7 2 /
  \plot 2.2 1  2.2 2.4 /
  \plot  2.8 1 2.8 2.4 /
  \plot  3.3 .6 3.3 2 /
  \plot   2.5 0 2.5 1.4 /
  \put {\smallbul} at 2.5 0
  \put {\smallbul} at 1.7 .6
  \put {\smallbul} at 2.2 1
  \put {\smallbul} at 2.8 1 
  \put {\smallbul} at 3.3 .6
  \put {\smallbul} at 2.5 1.4
  \put {\smallbul} at 1.7 2
  \put {\smallbul} at 2.2 2.4
  \put {\smallbul} at 2.8 2.4
  \put {\smallbul} at 3.3 2
  
  \endpicture}}
\caption{Hyperbolic polyhedron for $\Lambda _5$. Ordinary edges have internal dihedral angle $\pi/3$.
Dotted edges have internal dihedral angle $\pi/2$.}
\end{center}
\end{figure}


\begin{example}
For the groups $\Lambda _n$ whose presentations are given above,
\begin{enumerate}

\item the rank of $K_0\big( C^*_r(\Lambda_n)\big)\otimes \mathbb Q$ is equal to $3n+4$,

\item the rank of $K_1\big( C^*_r(\Lambda_n)\big)\otimes \mathbb Q$ is equal to $n+1$.

\end{enumerate}
\end{example}

\begin{proof}

The groups $\Lambda _n$ arise as hyperbolic reflection groups, with underlying polyhedron
$P$ the product of an $n$-gon with an interval. This polyhedron
has exactly two faces which are $n$-gons, and the dihedral angle along the edges of these two faces
is $\pi/3$. All the remaining edges have dihedral angle $\pi/2$. 
 An illustration of the polyhedron associated to the group $\Lambda _5$ is shown in Figure 3.
We will take the $\Lambda _n$ action
on $X:= \mathbb H^3$, with fundamental polyhedron $P$, and quotient space $X/\Lambda _n$
coinciding with $P$. Note that this action is a model for $\underline{E}\Lambda _n$, as finite subgroups $F$
of $\Lambda _n$ have non-empty fixed sets (the center of mass of any $F$-orbit will be a fixed point of
$F$), which must be convex subsets (and hence contractible). Both of these last statements are consequences
of the fact that the action is by isometries on a space of non-positive curvature.

Applying the argument detailed in Section \ref{algorithm}, we compute 
$\beta_0(\mathcal C)$ by counting equivalence classes on the set $F(\Lambda _n)$.
Since $X/\Lambda _n = P$, the set $F(\Lambda _n)$ consists of $2n$ copies of the group $S_4$. 
Each individual $S_4$ has five conjugacy classes, given by the possible cycle structures of elements,
with typical representatives: $e$, $(12)$, $(123)$, $(1234)$, $(12)(34)$. 
Next we consider how the edges identify the individual conjugacy classes to get the equivalence
classes for $\sim$. 

Firstly, all the individual
identity elements will be identified together, yielding a single $\sim$ class. So we will
henceforth focus on non-identity classes.
Each of the edges on the top $n$-gon has stabilizer $D_3 \cong S_3$, which has three conjugacy
classes, represented by $e$, $(12)$, $(123)$. Under the inclusion into each adjacent vertex 
stabilizers, representative elements for these classes map to representative elements with the
same cycle structure. So we see that all of the 3-cycles in the stabilizers of the vertices in the top
$n$-gon lie in the same $\sim$ class, and likewise for all of the 2-cycles. A similar analysis applies
to the vertices in the bottom $n$-gon. Finally, each vertical edge has stabilizer $D_2$, and under
the inclusion into the adjacent vertices, has image generated by the two permutations $(12)$ and
$(34)$ (and hence identifies {\it three} conjugacy classes together). Putting all this together, we 
see that the $\sim$ equivalence 
classes consist of:
\begin{itemize}
\item one class consisting of all the identity elements in the individual vertex groups, 
\item $n$ classes of elements of order $=2$, coming from the identification of cycles of the form $(12)(34)$
for each pair of vertices joined by a vertical edge,
\item one class of elements of order $=2$, coming from the cycles of the form $(12)$ in all vertex stabilizers,
\item two classes of elements of order $=3$, each coming from the cycles of the form $(123)$ in the top 
and bottom $n$-gon respectively, and
\item $2n$ classes of elements of order $=4$, each coming from the cycles of the form $(1234)$ in each individual
vertex stabilizer.
\end{itemize}
We conclude that the $\beta_0(\mathcal C)=\rank \big(H_0(\mathcal C)\otimes \mathbb Q\big) = 3n+4$.

Since our quotient space $X/\Lambda_n = P$ is not a closed orientable manifold, Lemma \ref{third-homology} tells us that
$H_3(\mathcal C) = 0$. To calculate $\beta_2(\mathcal C)=\rank \big(H_2(\mathcal C)\otimes \mathbb Q\big)$, 
we apply Corollary
\ref{h2-term:thm}. There is a single boundary 
component for $X/\Lambda_n=P$, which is orientable and even (it contains edges with stabilizer $D_2$), and contains
no edges with stabilizer $\mathbb Z_2$, so $s=1$, $t=0$, and $t^\prime=0$.
Also, there are no interior $2$-cells, and the single boundary component is of dihedral type, so $Y=\emptyset$.
By Corollary \ref{h2-term:thm}, we conclude that $\rank \big(H_2(\mathcal C)\otimes \mathbb Q\big) = 0$.

To calculate $\rank \big(H_1(\mathcal C)\otimes \mathbb Q\big)$, we need the Euler characteristic of the chain 
complex $\mathcal C$. There are $2n$ vertices, all with stabilizers $S_4$, which each have five conjugacy classes.
There are a total of $3n$ edges, $n$ of which have stabilizer $D_2$ (with four conjugacy classes), and $2n$ of which 
have stabilizer $D_3$ (with three conjugacy classes).
There are $n+2$ faces, with stabilizers $\mathbb Z_2$, which each have two conjugacy classes. 
There is one $3$-cell, with trivial stabilizer, with a single conjugacy class.
Putting this together, we have that
$$\chi (\mathcal C) = \big( 5(2n)\big) - \big(3(2n)+ 4(n)\big) + \big(2(n+2)\big) - 1 = 2n+3$$
Applying Lemma \ref{h1-rank}, we can now calculate:
$$\rank \big(H_1(\mathcal C)\otimes \mathbb Q\big) =  (3n+4) - (2n+3) = n+1$$
Finally, applying Lemma \ref{rational-expression}, we obtain the desired result.
\end{proof}


\subsection{Hyperbolic reflection groups - II}\label{hyp-II-example}

Next, let us consider a somewhat more complicated family of examples.  For
an integer $n\geq 2$, we consider the group $\G _n$, defined by the following
presentation:
$$\G _n:= \Bigg\langle x_1, \ldots , x_6   \hskip 5pt \Bigg|  \hskip 5pt
\parbox{3.3in}
{\centerline{$x_i^2, (x_1x_2)^n, (x_1x_5)^2, (x_1x_6)^2, (x_3x_4)^2, (x_2x_5)^2, (x_2x_6)^2$}

\centerline{$(x_1x_4)^3, (x_2x_3)^3, (x_4x_5)^3, (x_4x_6)^3, (x_3x_5)^3, (x_3x_6)^3$}}
\Bigg\rangle $$
Observe that the groups $\G_n$ are Coxeter groups, and that the presentation given
above is in fact a Coxeter presentation of the group. 

\vskip 10pt

\begin{example}
For the groups $\G _n$ whose presentations are given above, we have that:

$$\rank\Big(K_0\big( C^*_r(\G_n)\big)\otimes \mathbb Q\Big) = 
\begin{cases}
\frac{3}{2}(n-1)+12 & \text{$n$ odd,}\\
\frac{3}{2} n +14 & \text{$n$ even,}\\
\end{cases}
$$

$$
\rank\Big(K_1\big( C^*_r(\G_n)\big)\otimes \mathbb Q\Big) = 
\begin{cases}
3 & \text{$n$ odd,}\\
2 & \text{$n$ even.}\\
\end{cases}
$$

\end{example}

\begin{proof}
To verify the results stated in this example, we first observe that the Coxeter
groups $\G _n$ arise as hyperbolic reflection groups, with underlying polyhedron
$P$ a combinatorial cube. The geodesic 
polyhedron associated to $\G _n$ is shown in Figure 4.  Again, we set $X:= \mathbb H^3$, 
with fundamental polyhedron $P$, and quotient space $X/\G _n$ coinciding with $P$. As in the
previous example, $X$ is a model for $\underline{E}G$.


\begin{figure}
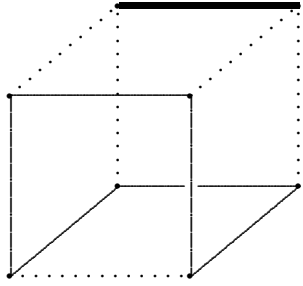
   
\label{graph}
\begin{center}

\vbox{\beginpicture
  \setcoordinatesystem units <1.6cm,1.6cm> point at -.4 2.5
  \setplotarea x from -.8 to 3, y from -.6 to 2.5

  \def\smallbul{\hskip .8pt\circle*{2.2}}
  \linethickness=.7pt



  \setsolid
  \plot 2.2 0  2.2 1.5  3.7 1.5  3.7 0  4.6 .75  3.76 .75 /
  \plot 3.64 .75   3.1 .75  2.2 0 /

\linethickness=2pt
  \putrule from 3.1 2.25 to 4.6 2.25 

\setdots
\setplotsymbol ({\circle*{1.2}}) 

  \plot 2.2 0  3.7 0  /
  \plot 3.1 .75   3.1 2.25  /
  \plot 4.6 .75  4.6 2.25  /
  \plot 2.2 1.5  3.1 2.25  /
  \plot 3.7 1.5   4.6 2.25  /
  \put {\smallbul} at 2.2 0 
  \put {\smallbul} at 3.7 0
  \put {\smallbul} at 2.2 1.5
  \put {\smallbul} at 3.7 1.5
  \put {\smallbul} at 3.1 .75
  \put {\smallbul} at 4.6 .75
  \put {\smallbul} at 3.1 2.25
  \put {\smallbul} at 4.6 2.25
  
  \endpicture}

\caption{Hyperbolic polyhedron for $\Gamma _n$. Ordinary edges have internal dihedral angle $\pi/3$.
Dotted edges have internal dihedral angle $\pi/2$. The thick edge has internal dihedral angle $\pi/n$.}
\end{center}
\end{figure}


To apply our procedure, we start by considering the equivalence relation $\sim$ 
on the set $F(\G _n)$. Out of the eight vertices of the cube $P$, six have stabilizer
isomorphic to $S_4$, while the remaining two have stabilizer $D_n\times \mathbb Z_2$.
We will think of $D_n$ as the symmetries of a regular $n$-gon, and let $r_0$, $r_1$ denote the reflection 
in a vertex, and in the midpoint of an adjacent side respectively (so $r_0, r_1$ are the standard
Coxeter generators for $D_n$).
Recall that the number of conjugacy classes of $D_n$ depends
on the parity of $n$: each rotation $\phi$ is only conjugate to its inverse $\phi^{-1}$, while the reflections
$r_i$ fall into one or two conjugacy classes, depending on whether $n$ is odd or even.
Crossing with $\mathbb Z_2$, each of these conjugacy class in $D_n$
gives rise to two conjugacy classes in $D_n \times \mathbb Z_2$: the image class
under the obvious inclusion $D_n\hookrightarrow D_n\times \mathbb Z_2$, and its
``flipped" image, obtained by composing with the non-trivial element $\tau$ in the 
$\mathbb Z_2$-factor. Next, we need to see how conjugacy classes in the individual vertex 
stabilizers get identified together by the edge stabilizers. After performing these
identifications, we obtain that the $\sim$ equivalence classes consist of:
\begin{itemize}
\item one class consisting of all the identity elements in the individual vertex groups, 
\item six classes of elements of order $=4$, each coming from the cycles of the form $(1234)$ in the six 
individual $S_4$ vertex stabilizers,
\item one class of elements of order $=3$, coming from the cycles of the form $(123)$ in the six $S_4$
vertex stabilizers (these classes get identified together via the edges with stabilizer $D_3$),
\item one class of elements of order $=2$, comprised from the cycles of the form $(12)$ in the six $S_4$
vertex stabilizers (identified via the edges with stabilizer $D_3$), along with the the three elements
of the form $(r_0, 1), (r_1, 1), (1, \tau)$ in the two vertices with stabilizer $D_n\times \mathbb Z_2$ 
(identified via the edges with stabilizer $D_2$),
\item one class of elements of order $=2$, consisting of the elements of cycle form $(12)(34)$ in the
two $S_4$ vertex stabilizers which are joined together by an edge with stabilizer
$D_2$ (which identifies these elements together),
\item two or four classes (according to parity of $n$), coming from the two elements of the form
$(r_0, \tau)$ or $(r_1, \tau)$ in the two vertices with stabilizer $D_n\times \mathbb Z_2$ (these
two elements lie in the same conjugacy class when $n$ odd), which are each identified to elements
with cycle form $(12)(34)$ in one of the two adjacent $S_4$ vertex stabilizers,
\item $n-1$ or $n$ conjugacy classes (according to $n$ odd or even respectively), coming from elements 
of the form $(\phi_i , \tau)$ in each of the two vertices with stabilizer $D_n\times \mathbb Z_2$, and
\item $(n-1)/2$ or $n/2$ conjugacy classes (according to $n$ odd or even respectively), coming from the
elements of the form $(\phi_i, 1)$ in the two vertices with stabilizer $D_n\times \mathbb Z_2$ (the elements
in the two copies get identified together via the edge with stabilizer $D_n$).
\end{itemize}
Summing this up, we find that $\rank \big(H_0(\mathcal C)\otimes \mathbb Q\big)$ is $\frac{3}{2}(n-1) + 12$
if $n$ is odd, and $\frac{3}{2}n + 14$ if $n$ is even.

The quotient space $X/\Gamma _n = P$ is a $3$-manifold with non-empty boundary, so Lemma \ref{third-homology} 
gives us that $H_3(\mathcal C) = 0$. The only boundary component is orientable and even, and contains no edges
with stabilizer $\mathbb Z_2$, so $s=1$ and $t=t^\prime=0$. Moreover, there are no interior faces, so $Y=\emptyset$.
By Corollary \ref{h2-term:thm}, we conclude that $\rank \big(H_2(\mathcal C)\otimes \mathbb Q\big) = 0$.

Next, let us calculate the rank of $H_1(\mathcal C)\otimes \mathbb Q$. To do this, we first compute
the Euler characteristic $\chi(\mathcal C)$. We have six vertices, four with stabilizer $S_4$ (having five
conjugacy classes), and two with stabilizer $D_n\times \mathbb Z_2$ (having either $n+3$ or $n+6$
conjugacy classes, depending on whether $n$ is odd or even). There are twelve edges, six with stabilizer
$D_3$ (with three conjugacy classes), five with stabilizer $D_2$ (with four conjugacy classes), and one
with stabilizer $D_n$ (with $(n+3)/2$ or $(n+6)/2$ conjugacy classes, depending on whether $n$ is odd or even).
There are six faces, each with stabilizer $\mathbb Z_2$ (with two conjugacy classes each). Finally, there is 
one $3$-cell with trivial stabilizer. Taking the alternating sum, we obtain that the Euler characteristic is:
$$
\chi (\mathcal C)= 
\begin{cases}
\frac{3}{2}(n-1)+9 & \text{$n$ odd,}\\
\frac{3}{2}n + 12 & \text{$n$ even.}\\
\end{cases}
$$
From Lemma \ref{h1-rank}, the difference between $\chi(\mathcal C)$ and the rank of 
$H_0(\mathcal C)\otimes \mathbb Q$ yields the rank of
$H_1(\mathcal C)\otimes \mathbb Q$, giving us that the latter is either $3$ or $2$ according
to whether $n$ is odd or even. Applying Lemma \ref{rational-expression}, we obtain 
the desired result.
\end{proof}


\subsection{Crystallographic group}\label{Crystal-example}
Our next example is taken from the work of Farley and Ortiz \cite{FO}. 
Consider the lattice $L\subset \mathbb R^3$ generated by the three vectors
$$ \mathbf{v}_{1} = \left  (\begin{smallmatrix} 2/3 \\  -1/3 \\ 2/3 \end{smallmatrix} \right), \quad
\mathbf{v}_{2} = \left(\begin{smallmatrix} 1 \\ -1 \\ 0 \end{smallmatrix} \right), \quad  
\mathbf{v}_{3} = \left( \begin{smallmatrix} 0 \\ -1 \\ 1 \end{smallmatrix} \right),$$
and let $G= \Sym (L)$ denote the subgroup of $\textup{Isom}(\mathbb R^3)$ which maps $L$
to itself. The group $G$ is one of the seven maximal split $3$-dimensional crystallographic groups,
and is discussed at length in \cite[Section 6.7]{FO}.

A polyhedral fundamental domain $P$ for the $G$-action on $\mathbb R^3$ is provided in Figure 5.
Next we describe the stabilizers of the various faces, edges, and vertices of $P$ (given in terms of the labeling
in Figure 5).

\vskip 5pt


\begin{figure}[h]
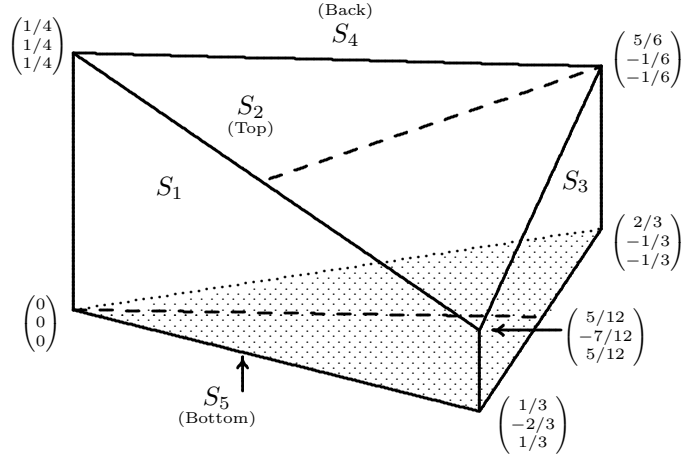
 
    
\begin{center}\hfil
\hbox{
\vbox{
  \beginpicture
\setcoordinatesystem units <.9cm,.9cm> point at -.2  4.2
\setplotarea x from -.5 to 8.2, y from -2 to 4.2
\linethickness=.7pt
%
 \def\myarrow{\arrow <4pt> [.2, 1]} 
%
\setplotsymbol ({\circle*{.4}})
\plotsymbolspacing=.3pt        
\plot 0 0  6 -1.5   7.8 1.2   7.8 3.6   0 3.8  0 0 /
\plot 0 3.8  6 -.3  7.8 3.6 /
\plot 6 -1.5  6 -.3 /
\setplotsymbol ({\circle*{.2}})
\setdashes
\plot 2.9 1.94   7.8 3.6 /
\plot 0 0  6.9 -.1 /
\setdots <3pt>   
\plot 0 0 7.8 1.2 /
\put {\tiny $\left(\begin{matrix}0 \\ 0 \\ 0\end{matrix}\right)$} [r] at -.2 -.2
\put {\tiny $\left(\begin{matrix}5/12\\-7/12\\5/12\end{matrix}\right)$} 
[l] at 7.2 -.4
\put {\tiny $\left(\begin{matrix}1/3 \\ -2/3 \\ 1/3\end{matrix}\right)$} [l]
at 6.2 -1.7
\put {\tiny $\left(\begin{matrix}2/3 \\ -1/3 \\ -1/3\end{matrix}\right)$}
[l] at 7.9 1
\put {\tiny $\left(\begin{matrix}5/6 \\ -1/6 \\ -1/6\end{matrix}\right)$}
[l] at 7.9 3.7
\put {\tiny $\left(\begin{matrix}1/4 \\ 1/4 \\ 1/4\end{matrix}\right)$} [r]
at -.1 3.9
\put {$S_1$} at 1.4 1.8
\put {$S_2$} at 2.6 3
\put {\tiny (Top)} [t] at 2.6 2.8
\put {$S_3$} at 7.4 1.9 
\put {$S_4$} [b] at 4 3.9
\put {\tiny (Back)} [b] at 4 4.3
\put {$S_5$} at 2.1 -1.3
\put {\tiny (Bottom)} [t] at 2.1 -1.5
\setshadegrid span <2pt>
\hshade -1.5 6 6   0 0 7 /  
\hshade -.1 0 7   1.2 7.8 7.8 /  

\setsolid
\myarrow from  7.2 -.3 to 6.2 -.3 
\myarrow from 2.5 -1.2 to 2.5 -.7
\endpicture}
}
\hfil

\end{center}


\caption{The polyhedron pictured here is an exact convex compact fundamental
  polyhedron for the action of $G$ on ${\mathbb R}^3$. The dashed lines
  represent axes of rotation (through 180 degrees) for certain elements of
  $G$.  Note that the base of the figure is an equilateral triangle,
  but the top is only isosceles.}
\end{figure}


\noindent \underline{Face stabilizers:} The two triangles at the top (collectively labelled by $S_2$), and the
two triangles at the bottom (labelled by $S_5$) have trivial stabilizer. The three quadrilateral sides ($S_1$, $S_3$,
and $S_4$) each have stabilizer $\mathbb Z_2$, generated by the reflection in the $2$-plane extending the corresponding
side. 

\vskip 5pt

\noindent \underline{Edge stabilizers:} The three vertical edges in Figure 5 each have stabilizer $D_3$, generated by
the reflections in the two incident faces. The two dotted edges (in the middle of the faces $S_2$ and $S_5$) have
stabilizer $\mathbb Z_2$, generated by a rotation by $\pi$ centered on the edge. All remaining edges have stabilizer
$\mathbb Z_2$, generated by the reflection in the (unique) incident face whose isotropy is non-trivial. Note that, when
one passes to the quotient space $X/G$, the two triangles in the top face $S_2$ get identified together by the $\pi$-rotation
in the dotted line (and similarly for the two triangles in the bottom face $S_5$).

\vskip 5pt

\noindent \underline{Vertex stabilizers:} The two vertices $(0,0,0)$ and $(5/6, -1/6, -1/6)$ have stabilizer 
$D_3\times \mathbb Z_2$. The two vertices $(1/4, 1/4, 1/4)$ and $(2/3, -1/3, -1/3)$ have stabilizer $D_3$.
Finally, the two vertices $(1/2, 1/2, 0)$ and $(1/3, -1/6, 1/3)$, the midpoints of the edges at which the dotted
lines terminate, have stabilizer $D_2$. The remaining vertices
of $P$ are in the same orbit as one of the six described above.

\vskip 10pt

\begin{example}
For the split crystallographic group $G$ described above, we have that 
$\rank\Big(K_0\big( C^*_r(\G_n)\big)\otimes \mathbb Q\Big) = 12$ and 
$\rank\Big(K_1\big( C^*_r(\G_n)\big)\otimes \mathbb Q\Big) = 0$.
\end{example}

\begin{proof}
We apply our algorithm, using the polyhedron $P$ above. Our first step is to consider the $\sim$
equivalence relation on the set $F(G)$. The vertex and edge stabilizers for $P$ have been described
above, and the $\sim$ equivalence classes are given as follows:
\begin{itemize}
\item one class consisting of all the identity elements in the individual vertex groups, 
\item one class consisting of all the elements of order $3$ in the individual vertex groups (these occur
in the four vertices with stabilizer $D_3$ or $D_3\times \mathbb Z_2$, and are identified together via three
consecutive edges with stabilizer $D_3$),
\item one class of elements of order $2$, consisting of elements of order two in the vertex groups isomorphic
to $D_3$, along with elements of order two in the canonical $D_3$-subgroup within the vertex groups
isomorphic to $D_3\times \mathbb Z_2$ (these are identified together via the three consecutive
edges with stabilizer $D_3$), and the elements of the form $(1,0)$ in the two vertex groups isomorphic
to $D_2\cong \mathbb Z_2\times \mathbb Z_2$ (identified together via the edges $S_1\cap S_2$ and
$S_3\cap S_5$),
\item two classes of elements of order $2$, coming from each of the two dotted edges: the rotation by $\pi$ in the edge
identifies the element $(0,1)$ in one endpoint (vertex with stabilizer $D_2\cong \mathbb Z_2\times \mathbb Z_2$) with
the element which is a product of a reflection in $D_3$ with a reflection in $\mathbb Z_2$ in the other endpoint 
(vertex with stabilizer $D_3\times \mathbb Z_2$),
\item six remaining classes, two each in the vertices with stabilizer $D_3\times \mathbb Z_2$ and one each in those with
stabilizer $D_2$ (these classes aren't identified to any others via the edges).
\end{itemize}
Summing this up, we see that $\rank (H_0(\mathcal C)\otimes \mathbb Q)=11$.

Next, we note that the quotient space $X/G$ is obtained from the polyhedron $P$ by ``folding up'' the top and bottom
triangle along the dotted lines, resulting in $\mathbb D^3$, a $3$-manifold with non-empty boundary. Lemma \ref{third-homology} 
gives us that $H_3(\mathcal C) = 0$. The only boundary component is orientable and odd, and contains edges
with stabilizer $\mathbb Z_2$, so $s=t=0$ and $t^\prime=1$. The $2$-complex $Y$ clearly deformation retracts
to the boundary $S^2$, so $\beta_2(Y)=1$. 
By Corollary \ref{h2-term:thm}, we conclude that $\rank \big(H_2(\mathcal C)\otimes \mathbb Q\big) = 1$.

Next, we calculate the rank of $H_1(\mathcal C)\otimes \mathbb Q$. As usual, we first calculate
the Euler characteristic $\chi(\mathcal C)$. We have six vertices, two with stabilizer $D_2$ (having four
conjugacy classes), two with stabilizer $D_3$ (having three
conjugacy classes), and two with stabilizer $D_3\times \mathbb Z_2$  (having six
conjugacy classes), giving an overall contribution of $+26$. There are nine edges, six with stabilizer
$\mathbb Z_2$ (with two conjugacy classes), and three with stabilizer $D_3$ (with three conjugacy classes),
giving a contribution of $-21$.
There are five faces, three with stabilizer $\mathbb Z_2$ (with two conjugacy classes each), and two with
trivial stabilizer (with one conjugacy class each), giving a contribution of $+8$. There is 
one $3$-cell with trivial stabilizer, contributing a $-1$. Summing up these contributions, we obtain that 
the Euler characteristic is
$ \chi (\mathcal C)= 26 - 21 + 8 -1=12$.
From Lemma \ref{h1-rank}, we see that the rank of 
$H_1(\mathcal C)\otimes \mathbb Q$ is $=0$. Applying Lemma \ref{rational-expression}, we obtain 
the desired result.
\end{proof}


\section{Concluding remarks}

The examples in the previous section were chosen to illustrate our algorithm on several different types of 
smooth $3$-orbifold groups. As the reader can see, our algorithm is quite easy to apply, once one has a good
description of the orbit space $G \backslash X$. There are several natural directions for further work. 

For instance, in Section \ref{Crystal-example}, we applied our algorithm to a specific $3$-dimensional crystallographic
group. It is known that, in dimension $=3$, there are precisely $219$ crystallographic groups up to isomorphism. One
could in principle apply our algorithm to produce a complete table of the rational $K$-theory groups of all $219$
groups. The essential difficulty in doing this lies in finding some convenient, systematic way to identify polyhedral 
fundamental domains for each of these groups. For the $73$ {\it split} 
crystallographic groups, such fundamental domains can be found in the forthcoming paper of Farley and Ortiz \cite{FO}.

Another reasonable direction would be to focus on uniform arithmetic lattices $\G$ in the Lie group 
$PSL_2(\mathbb C) \cong \textup{Isom}^+(\mathbb H^3)$. One could try to analyze the relationship (if any) between
the rational $K$-theory of such a $\G$ and the underlying arithmetic structure. Again, the difficulty here lies
in finding a good description of the polyhedral fundamental domain for the action (in terms of the arithmetic
data).

In a different direction, one can consider {\it hyperbolic reflection groups}. These are groups generated by reflections
in the boundary faces of a geodesic polyhedron $P \subset \mathbb H^3$. In this context, the polyhedron $P$ serves
as a polyhedral fundamental domain for the action, so one can readily apply our algorithm to compute the rational
$K$-theory of the corresponding group (see the examples in Sections \ref{hyp-I-example} and \ref{hyp-II-example}). 
One could try, in this special case, to refine our algorithm to produce expressions
for the {\it integral} $K$-theory groups, in terms of the combinatorial data of the polyhedron $P$. This is the subject of an 
ongoing collaboration of the authors.

\section*{Acknowledgments}

\vskip 10pt

The authors would like to thank Dick Canary, Ian Leary, Wolfgang L\"uck, Guido Mislin, Peter Scott, and Alain Valette for helpful comments.
The first author was partially supported by the NSF, under grant DMS-0906483, and by an Alfred P. Sloan Research Fellowship. The second author was partially supported by the NSF, under grant DMS-0805605.
The third author was supported by the EPSRC grant EP/G059101/1.

\section*{Appendix A}\label{appendix-A}

\vskip 10pt

In this Appendix, we provide the details for the computations used in some of the proofs in Section \ref{h2:section}.
Let $n \ge 2$ be an integer and $D_n$ be the dihedral group with presentation
\[
	D_n = \langle s_1, s_2 \; | \; s_1^2=s_2^2=(s_1s_2)^n \rangle.
\]
We will compute the map 
\begin{equation}\label{eqn3}
	\varphi \colon R_\mathbb{C}(\mathbb{Z}_2) \oplus R_\mathbb{C}(\mathbb{Z}_2) \longrightarrow R_\mathbb{C}(D_n)
\end{equation}
given by induction between representation rings with respect to the subgroups $\langle s_1 \rangle$ and $\langle s_2 \rangle$ 
of $D_n$, both isomorphic to $\mathbb{Z}_2$, and opposite orientations. That is, $\varphi(\rho, \tau) = (\rho \uparrow) - (\tau \uparrow)$, 
where `$\uparrow$' means induction between the corresponding groups.\\

Recall from the main text (see Section \ref{h2:section}, particularly Lemma \ref{bdry-dihedral-edge}) 
that if $e$ is a boundary edge with stabilizer $D_n$ and $\sigma_1$ and $\sigma_2$ are incident boundary 
faces, then $K_0(C^\ast_r (G_{\sigma_i})) \cong R_\mathbb{C}(\mathbb{Z}_2)$ and the relevant part of the Bredon chain complex 
at the edge $e$ is the map given in equation (\ref{eqn3}).\\

The character table for $D_n$ is given by
\[
	\begin{array}{c|cc}
		 D_n & (s_1s_2)^r & s_2(s_1s_2)^r\\ 
		 \hline 
		 \chi_1 & 1 &  1 \\
		 \chi_2 & 1 & -1 \\
		 \widehat{\chi_3} & (-1)^r &  (-1)^r \\
		 \widehat{\chi_4} & (-1)^r &  (-1)^{r+1} \\
		 \phi_p & 2 \cos\left( \frac{2 \pi p r}{m} \right) &  0 \\
	\end{array}
\]
\noindent where $0 \le r \le n-1$, $p$ varies between 1 and $n/2-1$ if $n$ is even or $(n-1)/2$ if $n$ is odd and the 
hat $\ \widehat{ }\ $ denotes a character which appears only when $n$ is even. \\
The character table for $\mathbb{Z}_2$ is given by
\[
	\begin{array}{c|rr}
		 \mathbb{Z}_2 & e & s_i\\ 
		 \hline 
		 \rho_1 & 1 & 1\\
		 \rho_2 & 1 & -1
	\end{array}
\] 

To compute the induction homomorphism we will use Frobenius reciprocity. We first do the case $\langle s_1 \rangle$. 
The characters of $D_n$ restricted to this subgroup are
\[
	\begin{array}{c|rr}
		  & e & s_1 \\ 
		 \hline 
		 \chi_1\downarrow & 1 & 1\\
		 \chi_2\downarrow & 1 & -1\\
		 \widehat{\chi_3}\downarrow & 1 & -1\\
		 \widehat{\chi_4}\downarrow & 1 & 1\\
		 \phi_p\downarrow & 2 & 0
	\end{array}
\]
Multiplying with the rows of the character table of $ \langle s_1 \rangle \cong C_2$ we obtain the induced representations
\[
	\begin{array}{rcl}
			\rho_1 \uparrow & = & \chi_1 + \widehat{\chi_4} + \sum \phi_p ,\\
			\rho_2 \uparrow & = & \chi_2 + \widehat{\chi_3} + \sum \phi_p .
	\end{array}
\]
The case $\langle s_2 \rangle$ is analogous, but note that the characters 3 and 4 must be interchanged in the even case:
\[
	\begin{array}{c|rr}
		  & e & s_j \\ 
		 \hline 
		 \chi_1 & 1 & 1\\
		 \chi_2 & 1 & -1\\
		 \widehat{\chi_3} & 1 & 1\\
		 \widehat{\chi_4} & 1 & -1\\
		 \phi_p & 2 & 0
	\end{array}
\]
and
\[
	\begin{array}{rcl}
			\rho_1 \uparrow & = & \chi_1 + \widehat{\chi_3} + \sum \phi_p ,\\
			\rho_2 \uparrow & = & \chi_2 + \widehat{\chi_4} + \sum \phi_p .
	\end{array}
\]
As maps of free abelian groups we obtain
\begin{eqnarray*}
	\mathbb{Z}^2 &\to & \mathbb{Z}^{c(D_n)} \\
	(a,b) &\mapsto & (a,b,\widehat{b},\widehat{a},a+b,\ldots , a+b) \quad \text{for } \langle s_1 \rangle \hookrightarrow D_n,\\
	(c,d) &\mapsto & (c,d,\widehat{c},\widehat{d},c+d,\ldots , c+d) \quad \text{for } \langle s_2 \rangle \hookrightarrow D_n.
\end{eqnarray*}
Finally, the map $\varphi$ above is
\begin{eqnarray*}
	R_{\mathbb{C}} \left( \mathbb{Z}_2 \right) \oplus  
		R_{\mathbb{C}} \left( \mathbb{Z}_2 \right) \cong \mathbb{Z}^2 \oplus \mathbb{Z}^2
		& \to & \mathbb{Z}^{c( D_n )} \cong 
		R_{\mathbb{C}} \left( D_n \right)\\
	(a,b,c,d) & \mapsto & (a-c, b-d, \widehat{b-c}, \widehat{a-d}, S,\ldots, S)\nonumber
\end{eqnarray*}
where $S = a+b-c-d$. 

\vskip 10pt

As an immediate consequence of this computation, we see that if the element $\langle k, k, \ldots , k\rangle$
lies in the image of $\phi$, then one must have that:
$$a - c = k = S = a + b - c - d.$$
Subtracting $a-c$ from both sides, we deduce that $0 = b - d = k$. In other words, the image of $\phi$ intersects
the subgroup $\mathbb Z \cdot \langle 1, 1, \ldots , 1\rangle$ only in the zero vector (as was stated in Lemma
\ref{bdry-dihedral-edge}).

\vskip 10pt

Another consequence is that it is easy to identify elements in the kernel of $\phi$. The equation 
$$0=(a-c, b-d, \widehat{b-c}, \widehat{a-d}, S,\ldots, S)$$
forces $a=c$ and $b=d$. If in addition, $n$ is even, then we also have $a=d$, and hence all terms must
be equal. This was used in the arguments for both Lemma \ref{Z-term:lemma} and Lemma \ref{phi-term:lemma}.

\section*{Appendix B}\label{appendix-B}

\vskip 10pt

In this Appendix we compute the $2^\text{nd}$ Betti number of the 3-manifolds $M_f$ appearing in the Remark at the end of
Section \ref{torsion-free-examples}. 
The manifold $M_f$, as a mapping torus, fibers over $S^1$ with fiber $T^2$. For this fibration, the Leray-Serre spectral 
sequence gives
\[
	E^2_{pq} = H_p(S^1,H_q(T^2)) \Rightarrow H_{p+q}(M_f).
\]
Since $S^1$ is 1-dimensional, $E^2_{p,q}=0$ unless $p=0,1$. The differentials have bidegree $(-2,1)$ so the spectral 
sequence already collapses at the $E^2$-page. This implies that
\[
	H_2(M_f) \cong E^2_{0,2} \oplus E^2_{1,1} \cong H_0(S^1,H_2(T^2)) \oplus H_1(S^1,H_1(T^2)).
\]
Recall that this is not ordinary homology but rather homology with local coefficient system given by the homology of the fiber.

The homology group $H_0(S^1,H_2(T^2))$ is obtained from the chain complex
\[
	\xymatrix{0 \ar[r] & \mathbb Z \ar[r]^-{\text{Id}-f_*} & \mathbb Z \ar[r] & 0}
\]
where $f_*\colon \mathbb Z \to \mathbb Z$ is the map induced by the action of the gluing map $f$ on the local 
coefficient $\mathbb Z = H_2(T^2)$.
If $\det(\alpha)=1$, $f$ is orientation preserving and hence $f_* = \text{Id}$. This implies $H_0(S^1,H_2(T^2)) \cong \mathbb Z$.
If $\det(\alpha)=-1$, $f$ is orientation reversing and hence $f_* = -\text{Id}$. This implies $H_0(S^1,H_2(T^2)) \cong \mathbb Z_2$.

The homology group $H_1(S^1,H_1(T^2))$ is obtained from the chain complex
\[
	\xymatrix{0 \ar[r] & \mathbb Z^2 \ar[r]^-{\text{Id}-f_*} & \mathbb Z^2\ar[r] & 0}
\]
where now $f_*:\mathbb Z^2 \to \mathbb Z^2$ is induced by the action of the gluing map $f$ on the local coefficient 
$\mathbb Z^2 = H_1(T^2)$. Note that by construction $f$ acts on $\pi_1(T^2) \cong H_1(T^2)$ via the automorphism 
$\alpha$. So the map above is $\text{Id}-\alpha$ and hence $H_1(S^1,H_1(T^2)) \cong \ker(\text{Id}-\alpha)$. 
Suppose that $\alpha = \left( \begin{smallmatrix} a & b \\ c & d\end{smallmatrix} \right)$. 
Then $\text{Id} - \alpha = \left( \begin{smallmatrix}1- a & -b \\ -c & 1-d\end{smallmatrix} \right)$. 
The kernel of this map has dimension 2 if and only if $\text{Id}-\alpha=0$, that is, $\alpha=\text{Id}$. 
The dimension is at least 1 if and only if the determinant is zero, that is,
\[
	(1-a)(1-d) = bc \;\Leftrightarrow\; 1 - \text{tr}(\alpha)+ad=bc \;\Leftrightarrow \; 1 + \det(\alpha) = \text{tr}(\alpha).
\]
 This occurs if and only if $\det(\alpha)=1$ and $\text{tr}(\alpha)=2$, or $\det(\alpha)=-1$ and $\text{tr}(\alpha)=0$.
Altogether, this gives us
\[
	\beta_2(M_f) = 
\begin{cases}
	3 & \text{if } \alpha = \text{Id},\\
	2 & \text{if } \det(\alpha)=1, \text{tr}(\alpha)=2,  \alpha \neq \text{Id},\\
	1 & \text{if } \det(\alpha)=1, \text{tr}(\alpha)\neq 2,\\
	1 & \text{if } \det(\alpha)=-1, \text{tr}(\alpha)= 0,\\
	0 & \text{if } \det(\alpha)=-1, \text{tr}(\alpha)\neq 0.\\
\end{cases}
\]


\end{document}